    \def\mathbb{\Bbb}
    \def\mathfrak{\frak}
    \def\mathbf{\bold}
      \def\boldsymbol#1{{\bold #1}}
    \def\mathbit{\boldsymbol}
    \newenvironment{proof}{%
         \@ifnextchar[{%
                       \expandafter\let\expandafter\end@proof
                         \csname endpf*\endcsname
                         \my@proof
                      }{\let\end@proof\endpf\pf}%
        }{\end@proof}
    \def\my@proof[#1]{\@nameuse{pf*}{#1}}
    \def\xrightarrow[#1]#2{@>{#2}>{#1}>}
    \def\xleftarrow[#1]#2{@<{#2}<{#1}<}
    \def\providecommand#1{\def#1}
    \def\emph#1{{\em #1}}
    \def\textbf#1{{\bf #1}}
        \DeclareMathAlphabet{\mathbit}{OML}{cmm}{b}{it}
      \def\Sb#1\endSb{_{\substack{#1}}}
      \def\Sp#1\endSp{^{\substack{#1}}}
                \def\mathcal{\cal}
                \def\pcyr{%
                        \def\default@family{UWCyr}%
                        \let\oldSl@\sl
                        \def\sl{\def\default@shape{it}\oldSl@}%
                        \cyracc
                        \language\Russian\family{UWCyr}\selectfont
                }
                \DeclareFontFamily{OT2}{cmr}{\hyphenchar\font45 }
                \DeclareFontShape{OT2}{cmr}{m}{n}{%
                     <5><6><7><8><9><10>gen*wncyr %
                     <10.95><12><14.4><17.28><20.74><24.88> wncyr10 %
                }{}
                \DeclareFontShape{OT2}{cmr}{m}{it}{%
                     <5><6><7><8><9><10> gen * wncyi%
                     <10.95><12><14.4><17.28><20.74><24.88> wncyi10%
                }{}
                \DeclareFontShape{OT2}{cmr}{bx}{n}{%
                     <5><6><7><8><9><10> gen * wncyb%
                     <10.95><12><14.4><17.28><20.74><24.88> wncyb10%
                }{}
                \DeclareFontShape{OT2}{cmr}{m}{sl}{%
                     <-> ssub * cmr/m/it%
                }{}
                \DeclareFontShape{OT2}{cmr}{m}{sc}{%
                     <5><6><7><8><9><10>%
                     <10.95><12><14.4><17.28><20.74><24.88> wncysc10%
                }{}
                \DeclareFontFamily{OT2}{cmss}{\hyphenchar\font45 }
                \DeclareFontShape{OT2}{cmss}{m}{n}{%
                     <8><9><10> gen * wncyss%
                     <10.95><12><14.4><17.28><20.74><24.88> wncyss10%
                }{}
                \def\cyrencodingdefault{OT2}
                \def\pcyr{%
                        \cyracc
                        \let\encodingdefault\cyrencodingdefault
                        \language\Russian\fontencoding{OT2}\selectfont
                }
        \def\theorembodyfont#1{\relax}
          \let\@@th@plain\th@plain
          \def\th@plain{ \@@th@plain \slshape }
        \let\normalshape\relax
     \def\cprime{$'$}
  \def\@sect@my#1#2#3#4#5#6[#7]#8{%
\ifnum #2>\c@secnumdepth
   \let\@svsec\@empty
 \else
   \refstepcounter{#1}%
\edef\@svsec{\ifnum#2<\@m
             \@ifundefined{#1name}{}{\csname #1name\endcsname\ }\fi
\noexpand\rom{\csname the#1\endcsname.}\enspace}\fi
 \@tempskipa #5\relax
 \ifdim \@tempskipa>\z@ 
   \begingroup #6\relax
   \@hangfrom{\hskip #3\relax\@svsec}{\interlinepenalty\@M #8\par}%
   \endgroup
   \if@article\else\csname #1mark\endcsname{%
        \ifnum \c@secnumdepth >#2\relax\csname the#1\endcsname. \fi#7}\fi
\ifnum#2>\@m \else
       \let\@tempf\\ \def\\{\protect\\}\addcontentsline{toc}{#1}%
{\ifnum #2>\c@secnumdepth \else
             \protect\numberline{%
               \ifnum#2<\@m
               \@ifundefined{#1name}{}{\csname #1name\endcsname\ }\fi
               \csname the#1\endcsname.}\fi
           #8}\let\\\@tempf
     \fi
 \else
  \def\@svsechd{#6\hskip #3\@svsec
    \@ifnotempty{#8}{\ignorespaces#8\unskip
       \ifnum\spacefactor<1001.\fi}%
        \ifnum#2>\@m \else
          \let\@tempf\\ \def\\{\protect\\}\addcontentsline{toc}{#1}%
            {\ifnum #2>\c@secnumdepth \else
              \protect\numberline{%
                \ifnum#2<\@m
                \@ifundefined{#1name}{}{\csname #1name\endcsname\ }\fi
                \csname the#1\endcsname.}\fi
             #8}\let\\\@tempf\fi}%
 \fi
\@xsect{#5}}
  \let\@sect\@sect@my             
  \def\th@remark@my{\theorempreskipamount6\p@\@plus6\p@
    \theorempostskipamount\theorempreskipamount
    \def\theorem@headerfont{\it}\normalshape}
  \let\th@remark\th@remark@my
\let\myLabel\@gobble
\def\labelsONmargin{\@mparswitchfalse\def\myLabel##1{\@bsphack\marginpar
                                  {\normalshape\tiny\rm Label ##1}\@esphack}}
  \def\url#1{#1}%
\def\cyracc{\def\u##1{
                \if \i##1\char"1A%
                \else \if I##1\char"12%
                \else \accent"24 ##1\fi\fi }%
\def\"##1{\if e##1{\char"1B}%
                \else \if E##1{\char"13}%
                \else \accent"7F ##1\fi\fi }%
\def\9##1{\if##1z\char"19 
\else\if##1Z\char"11 
\else\if##1E\char"03 
\else\if##1e\char"0B 
\else\if##1u\char"18 
\else\if##1U\char"10 
\else\if##1A\char"17 
\else\if##1a\char"1F 
\else\if##1p\char"7E 
\else\if##1P\char"5E 
\else\if##1Q\char"5F 
\else\if##1q\char"7F 
\else\if##1i\char"1A 
\else\if##1I\char"12 
\else\if##1N\char"7D 
\fi
\fi
\fi
\fi
\fi
\fi
\fi
\fi
\fi
\fi
\fi
\fi
\fi
\fi
\fi
}%
\def\cydot{{\kern0pt}}}%
\def\cydot{$\cdot$}
        \def\Russian{0\relax
    \message{Don't know the hyphenation rules for Russian^^J
                        Please do INITeX with `input  russhyph' in the 
                        command line}%
                \gdef\Russian{0\relax}%
        }
  \def\@putname#1#2#3#4{\def\@@ref{#3}\let\old@bf\bf
        \def\bf##1{\old@bf\if?\noexpand##1?{#4}\else##1\fi}%
	#1{#2}%
        \let\bf\old@bf}
  \def\@putname#1#2#3#4{\def\@@ref{#3}%
	\let\old@reset@font\reset@font
	\def\reset@font##1##2{\old@reset@font##1\if?\noexpand##2?{#4}\else##2\fi}%
	#1{#2}%
        \let\reset@font\old@reset@font}
\let\my@ref=\ref
\def\ref#1{\@putname\my@ref{#1}{#1}{\tiny\rm\@@ref}}
\let\my@pageref=\pageref
\def\pageref#1{\@putname\my@pageref{#1}{#1}{\tiny\rm\@@ref}}
\let\my@cite=\cite
\def\cite#1{\@putname\my@cite{#1}{\@citeb}{\tiny\rm\@@ref}}
\theoremstyle{plain} 
\numberwithin{equation}{section}
\theoremstyle{definition}
\newtheorem{definition}{Definition}[section]
\theoremstyle{remark}
\newtheorem{remark}[definition]{Remark} 
\theoremstyle{plain} 
\newtheorem{theorem}[definition]{Theorem}
\newtheorem{lemma}[definition]{Lemma}
\newtheorem{corollary}[definition]{Corollary}
\newtheorem{proposition}[definition]{Proposition}
\newcommand{\Hom}{\operatorname{Hom}}
\newcommand{\ext}{\operatorname{ext}}
\newcommand{\Ind}{\operatorname{Ind}}
\newcommand{\md}{\operatorname{mod}}
\newcommand{\Span}{\operatorname{Span}}
\newcommand{\End}{\operatorname{End}}
\newcommand{\soc}{\operatorname{soc}}
\newcommand{\q}{\mathfrak{q}}
\newcommand{\kk}{\mathfrak{k}}
\newcommand{\h}{\mathfrak{h}}
\newcommand{\bb}{\mathfrak{b}}
\newcommand{\g}{\mathfrak{g}}
\newcommand{\cc}{\mathfrak{c}}
\begin{document}
\bibliographystyle{amsplain}
\relax 

\title[Tensor representations of $\q(\infty)$]{Tensor representations of $\q(\infty)$}

\author{Dimitar Grantcharov} 
\address{Department of Mathematics \\ University of Texas at Arlington \\ Arlington, TX 76019} \email{grandim@uta.edu}
\author{Vera Serganova} 
\address{Department of Mathematics \\ University of California at Berkeley \\ Berkeley, CA 94720} \email{serganov@math.berkeley.edu}
\begin{abstract} We introduce a symmetric monoidal category of modules over the direct limit queer superalgebra $\q (\infty)$. The category can be defined in two equivalent ways with the aid of the large annihilator condition. Tensor products of copies of the natural and the conatural representations are injective objects in  this category. We obtain the socle filtrations and formulas for the tensor products of the indecoposable injectives. In addition, it is proven that the category is Koszul self-dual. \\

\noindent 2000 MSC: 17B65, 17B10, 16G10.\\

\noindent Keywords and phrases: queer superalgebras, tensor representations, Koszul duality.
\end{abstract}

\maketitle
\section{Introduction}

Recently  new symmetric monoidal categories have attracted considerable attention. Among them are the categories $\operatorname{Trep} \mathfrak{g}$
of modules over direct limit $\mathfrak{g}$ of classical Lie algebras generated as abelian tensor categories by the natural and conatural representations.
Namely, $\mathfrak g$ is one of the following: $\mathfrak{gl}(\infty)=\displaystyle\lim_{\longrightarrow}\mathfrak{gl}(n)$, $\mathfrak{o}(\infty)=\displaystyle\lim_{\longrightarrow}\mathfrak{o}(n)$ and 
$\mathfrak{sp}(\infty)=\displaystyle\lim_{\longrightarrow}\mathfrak{sp}(n)$.  In \cite{DPS} it is proven
that these categories have enough injective objects and that every object has a finite injective resolution. Furthermore, the algebra of endomorphisms of an injecive cogenerator
is described explicitly. With the aid of this description, it follows that the categories are Koszul. Furthermore, it is shown  in \cite{SS} that these categories satisfy a natural universality property.

The categories $\operatorname{Trep} \mathfrak{g}$ of direct limits of basic classical Lie superalgebras $ \mathfrak{g} = \mathfrak{gl}(\infty|\infty)$ and $ \mathfrak{g} = \mathfrak{osp}(\infty|\infty)$ were studied in \cite{Serg}. 
It was shown there that no  new categories  appear, namely that the categories  $\operatorname{Trep}\mathfrak{gl}(\infty|\infty)$ and 
$\operatorname{Trep}\mathfrak{gl}(\infty)$ are equivalent and that the categories 
$\operatorname{Trep}\mathfrak{o}(\infty)$ and $\operatorname{Trep}\mathfrak{osp}(\infty|\infty)$ are equivalent as symmetric monoidal categories.
Furthermore, one can use the properties of the category $\operatorname{Trep}\mathfrak{osp}(\infty|\infty)$ to prove that $\operatorname{Trep}\mathfrak{o}(\infty)$ and
$\operatorname{Trep}\mathfrak{sp}(\infty)$ are equivalent as monoidal abelian categories. 

In contrast with $\mathfrak{gl}(\infty|\infty)$ and $\mathfrak{osp}(\infty|\infty)$, for the strange Lie superalgebras $\q(\infty)$
and $p(\infty)$ we obtain new 
interesting symmetric monoidal categories. We believe that these categories satisfy certain universality conditions analogous to the the category 
$\operatorname{Trep}\mathfrak{gl}(\infty)$ and $\operatorname{Trep}\mathfrak{o}(\infty)$. The case of $p(\infty)$ is discussed in \cite{Serg} and  \cite{Serg2}. 

The goal of this paper is to  investigate in detail the category 
$\operatorname{Trep} \mathfrak{q}(\infty)$ of the direct limit queer Lie superalgebra $\mathfrak{q}(\infty)$. We give two equivalent intrinsic definitions of $\operatorname{Trep} \mathfrak{q}(\infty)$ using the large annihilator condition. Then we classify the 
simple and indecomposable injective modules of  $\operatorname{Trep} \mathfrak{q}(\infty)$ and show that the category is Koszul self-dual. The latter is especially interesting since it is known that the category of 
finite-dimensional modules over $\q(n)$ is not Koszul, even more -  the algebra of endomorphisms of an injective cogenerator is not  quadratic, see \cite{MM}. In the present paper we also classify 
the blocks of  $\operatorname{Trep} \mathfrak{q}(\infty)$ and  express the Ext-groups between the simple objects using the shifted Littlewood--Richardson coefficients, \cite{St},
\cite{GJKKK}. 

Another motivation to study the category $\operatorname{Trep}\mathfrak{gl}(\infty)$ arises from the fact that the Lie superalgebras $\q(n)$ have very interesting representation theory and combinatorics. 
Representations of $\q(n)$ in the tensor algebra of the natural representation were originally studied by A. Sergeev, \cite{S}, \cite{S2}. 
He discovered a duality analogous to
the celebrated Schur-Weyl duality, often called the Sergeev duality. This duality relates the above representations with projective representations of the symmetric group, and the characters of these 
representations are given by Schur $Q$-functions, see \cite{Macd}. If one considers representations of $\q(n)$ in the tensor algebra of the natural representation and its dual,
the situation is more complicated. In particular, the representations are not completely reducible and the algebras of intertwining operators are not semisimple. This situation 
was studied in \cite{JK}, where the latter algebras are presented in a diagrammatic form. These algebras are  generalizations of Brauer and walled Brauer algebras.
The Koszul algebra which appears in our category, is a subalgebra of this diagrammatic algebra. This is related to the fact, that we have a tensor functor $\Gamma_n$
from our category $\operatorname{Trep} \mathfrak{q}(\infty)$ to the category of finite-dimensional $\q(n)$-modules but this functor does not map simple objects to simple objects.

We would like to remark that the category $\operatorname{Trep}\mathfrak{gl}(\infty|\infty)$ was used in \cite{EHS} as a technical tool for constructing the abelian envelope of the Deligne's category
$\operatorname{Rep}Gl(t)$ when $t$ is integer. It seems that a similar construction can be obtained for type $Q$ which we will address in a subsequent paper.

The organization of the paper is the following. In Section 2 we collect some useful results on associative superalgebras and finite-dimensional representations of $\q (n)$. The two equivalent definitions of  $\operatorname{Trep}\q (\infty)$ and a classification of its simple objects are included in Section 3. In Section 4 we classify the indecomposable injective objects of   $\operatorname{Trep}\q (\infty)$ and obtain their socle filtration. In this section we also prove that the category is a symmetric monoidal category. In Section 5 we compute the extension groups between the simple objects in $\operatorname{Trep}\q (\infty)$ and show that every object has a final injective resolution. We also derive a formula of the tensor product of the indecomposable injectives in terms of shifted Littlewood-Richardson coefficients. The Koszulity and self-dual Koszulity of the category is proven in Section 6.

\medskip

\noindent {\it Acknowledgements.} Both authors would like to express their gratitude for the excellent working conditions provided by  Mathematisches Forschungsinstitut Oberwolfach where most of this research was performed. The first author is partially supported by Simons Collaboration Grant 358245. The second author is partially supported by
the NSF Grant  DMS 1303301.

\section{Preliminaries}
In this paper we work in the categories of $\mathcal A$-modules for a Lie superalgebra or an associative superalgebra $\mathcal A$ over $\mathbb C$.
Thus, all objects are equipped with $\mathbb Z_2$-grading.
We use the notation $\Hom (\cdot, \cdot)$ for the supervector space of all $\mathcal A$-equivariant linear maps. For abelian categories we consider only morphisms that preserve parity, which we denote by $\hom(\cdot, \cdot)$. The Ext-groups  in the abelain category of $\mathcal A$-modules will be denoted by $\ext^i (\cdot, \cdot)$.

All multiplicities and dimensions will be considered as elements of $\mathbb Z[\varepsilon]/(\varepsilon^2-1)$. We set $\theta=1+\varepsilon$. Note that 
multiplication by $\theta$ is an injective map $\mathbb N[\varepsilon]/(\varepsilon^2-1)\to \mathbb N[\varepsilon]/(\varepsilon^2-1)$. Hence we say that 
$\zeta\in \mathbb N[\varepsilon]/(\varepsilon^2-1)$ is divisible by $\theta$ if $\zeta=\xi\theta$ for some (unique) $\xi\in\mathbb N[\varepsilon]/(\varepsilon^2-1)$ 
and we set $\xi=\frac{\zeta}{\theta}$.  
At the level of Grothendieck rings we let
$\varepsilon[M]=[\Pi M]$, where $\Pi$ is the switch of parity functor.

We next state the super-analogue of the classical Schur's Lemma. For the proof, see  \S1.1.6 in \cite{K}. 
\begin{lemma}\label{lem:Schur} Let $\mathcal A$ be a finite or countable-dimensional superalgebra over $\mathbb C$ and $M$ be a simple $\mathcal A$-module.
Then either $\End(M)=\mathbb C$ or  $\End(M)$ is isomorphic to the superalgebra $\mathbb C[\xi]/(\xi^2-1)$ with an odd generator $\xi$.
\end{lemma}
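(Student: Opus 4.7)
The strategy is to analyze the even and odd parts of $\End(M)$ separately, first reducing the even part to $\mathbb C$ by a Dixmier-type argument, and then using the simplicity of $M$ to pin down the odd part.

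First I would observe that, since $M$ is simple (meaning it has no proper graded submodules), any nonzero homogeneous endomorphism $\phi \in \End(M)$ has graded kernel and graded image, so $\Ker\phi$ and $\Im\phi$ are graded submodules and hence $\phi$ is a bijection. In particular, every nonzero element of $\End(M)_0$ is invertible, so $\End(M)_0$ is a division algebra over $\mathbb C$. Moreover, picking any nonzero $m\in M$, the map $a\mapsto am$ shows that $M$ is cyclic and hence at most countable-dimensional, and thus $\End(M)_0$ is at most countable-dimensional over $\mathbb C$.

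Second, I would invoke the standard Dixmier-type lemma: a division algebra $D$ over an algebraically closed field $k$ of cardinality greater than $\dim_k D$ must equal $k$. Concretely, if $d\in \End(M)_0\setminus \mathbb C$, then every $d-\lambda$ ($\lambda\in\mathbb C$) would be invertible, and the family $\{(d-\lambda)^{-1}\}_{\lambda\in\mathbb C}$ would be uncountable and linearly independent (by a standard partial-fractions argument), contradicting the countable dimension. Hence $\End(M)_0 = \mathbb C$.

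Third, I would study $\End(M)_1$. For any nonzero $\phi \in \End(M)_1$, the square $\phi^2$ is an even endomorphism, so $\phi^2 = c \in \mathbb C$. If $c=0$ then $\phi\ne 0$ with $\phi^2=0$ would give $\Im\phi \subseteq \Ker\phi$; since $\phi$ is a nonzero homogeneous map between graded simple $M$'s it must be bijective, forcing $\phi=0$, a contradiction. So $c\neq 0$, and replacing $\phi$ by $\phi/\sqrt{c}$ we obtain an odd element $\xi$ with $\xi^2=1$. This element is invertible, so for any other $\psi \in \End(M)_1$, the product $\psi\xi$ lies in $\End(M)_0=\mathbb C$, giving $\psi\in\mathbb C\xi$. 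Thus $\End(M)_1$ is either zero (yielding $\End(M)=\mathbb C$) or one-dimensional spanned by such a $\xi$ (yielding $\End(M)\cong \mathbb C[\xi]/(\xi^2-1)$).

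The only nontrivial point is the Dixmier-style uncountability argument in step two; once $\End(M)_0 = \mathbb C$ is established, the case analysis in step three is elementary. Graduation conventions (\emph{simple} = no proper graded submodule) need to be kept consistent throughout, but no serious obstacle is expected.
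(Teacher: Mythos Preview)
Your argument is correct and is the standard proof of the super Schur lemma: Dixmier's countability trick for the even part, followed by the elementary observation that any nonzero odd endomorphism squares to a nonzero scalar. The paper does not actually give a proof of this lemma but simply refers the reader to \S 1.1.6 of Kac~\cite{K}; your write-up supplies precisely the argument one finds there. One cosmetic point: in the first step you should pick a nonzero \emph{homogeneous} $m\in M$ so that $\mathcal A m$ is a graded submodule, but this is the only place where the phrasing needs tightening.
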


We say that a simple $\mathcal A$-module is of M{\it -type} if  $\End(M)=\mathbb C$ or, equivalently, if $M$ and $\Pi M$ are not isomorphic. Alternatively, a simple $\mathcal A$-module is of Q-{\it type} if  $\End(M)=\mathbb C[\xi]/(\xi^2-1)$ or, equivalently, if  $M$ and $\Pi M$ are isomorphic. From now on we set $C_1=\mathbb C[\xi]/(\xi^2-1)$.

Let $\mathcal A$ and $\mathcal B$ be two superalgebras, $M$ be a simple $\mathcal A$-module and $N$ be a simple $\mathcal B$-module.
If both $M$ and $N$ are of Q-type, we set
$$M\widehat{\boxtimes}N:=M\otimes_{C_1}N.$$
Then $M\widehat{\boxtimes}N$ is a simple $\mathcal A\otimes\mathcal B$-module. We have the natural decomposition
$$M{\boxtimes}N\simeq M\widehat{\boxtimes}N\oplus \Pi(M\widehat{\boxtimes}N),$$
and the embedding 
$C_1\hookrightarrow \End_{\mathcal A\otimes \mathcal B}(M\widehat{\boxtimes}N)$ defined by $\xi\mapsto \xi\otimes 1$.

Let $\mathcal A=U(\mathfrak k)$ be the universal enveloping of a superalgebra $\kk$, then $\mathcal A$ is a Hopf superalgebra
and $M\otimes N$ is equiped with an $\mathcal A$-module structure. 
If $M$ and $N$ are of Q-type, then we define
$$M\widehat \otimes N:=M\otimes_{C_1}N.$$

We will also need the following general lemma.

\begin{lemma} \label{lem:gensuper}
Let ${\mathcal A}$ be a semisimple associative unital superalgebra over ${\mathbb C}$ and let $e \in {\mathcal A}$ be a primitive idempotent of ${\mathcal A}$.
\begin{itemize}
\item[(i)] The following identity holds
\begin{equation} \label{equ:sumsimples}
{\mathcal A} = \bigoplus_{L \in {\rm Irr}\, {\mathcal A}} L \boxtimes_{\End (L)} L^*, 
\end{equation}
where ${\rm Irr}\, {\mathcal A}$ denote the set of isomorphism classes of irreducible left ${\mathcal A}$-modules.
\item[(ii)] ${\mathcal A} e$ is an irreducible ${\mathcal A}$-module.
\item[(iii)]  Let $M$ be a finite-dimensional ${\mathcal A}$-module. Then
$$
[M: {\mathcal A} e] = \frac{\dim eM }{\dim \End_{\mathcal A} ({\mathcal A} e)}
$$
and
$$\dim eM=\dim\Hom_{\mathcal A}({\mathcal A} e,M).$$
\end{itemize}
\end{lemma}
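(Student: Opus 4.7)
The plan is to reduce everything to the super-analogue of Wedderburn's theorem and then exploit the standard Hom-tensor adjunction $\Hom_{\mathcal{A}}(\mathcal{A}e,-)\cong e(-)$ in the super setting. By super-Wedderburn, $\mathcal{A}$ decomposes as a direct sum of simple superalgebras, each of which, by Lemma \ref{lem:Schur}, is either a matrix superalgebra of M-type (with $\End(L)=\mathbb{C}$) or a queer matrix superalgebra of Q-type (with $\End(L)=C_1$). For part (i), I would identify each simple factor as the bimodule of $\End(L)$-linear endomorphisms of $L$, which is canonically isomorphic to $L\otimes_{\End(L)}L^*$; summing over isomorphism classes of irreducibles gives the asserted identity \eqref{equ:sumsimples}, uniformly in the M/Q dichotomy.

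For part (ii), I would use the identification $\End_{\mathcal{A}}(\mathcal{A}e)\cong (e\mathcal{A}e)^{\mathrm{op}}$ via $f\mapsto f(e)$. Since $\mathcal{A}$ is semisimple, so is $e\mathcal{A}e$; and since $e$ is primitive, the identity of $e\mathcal{A}e$ does not split as a sum of orthogonal nonzero even idempotents. Hence $e\mathcal{A}e$ must be a simple semisimple superalgebra with a single primitive even idempotent, forcing it to be $\mathbb{C}$ or $C_1$. Any nontrivial decomposition of $\mathcal{A}e$ into a direct sum of nonzero $\mathcal{A}$-submodules would give rise to orthogonal even idempotents in $e\mathcal{A}e$; this contradiction shows $\mathcal{A}e$ is irreducible.

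Part (iii) is then essentially formal. The map $f\mapsto f(e)$ furnishes an isomorphism of supervector spaces $\Hom_{\mathcal{A}}(\mathcal{A}e,M)\cong eM$, immediately giving the second formula. For the first, decompose $M$ into simples with super-multiplicities $[M:L]\in\mathbb{Z}[\varepsilon]/(\varepsilon^2-1)$; applying $\Hom_{\mathcal{A}}(\mathcal{A}e,-)$ picks out the isotypic component of the simple $\mathcal{A}e$ and contributes one copy of $\End_{\mathcal{A}}(\mathcal{A}e)$ per occurrence, so that $\dim eM=[M:\mathcal{A}e]\cdot \dim\End_{\mathcal{A}}(\mathcal{A}e)$.

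The main obstacle is bookkeeping rather than conceptual: one has to treat the M-type and Q-type components in parallel while remembering that in Q-type $L\cong \Pi L$, so the relevant multiplicities lie in $\mathbb{Z}[\varepsilon]/(\varepsilon^2-1)$ and $\End(L)$ is two-dimensional. Writing the regular bimodule uniformly as $L\otimes_{\End(L)}L^*$ and letting the denominator $\dim\End_{\mathcal{A}}(\mathcal{A}e)\in\{1,2\}$ absorb the Q-type compensation is what keeps the statement of (iii) type-independent.
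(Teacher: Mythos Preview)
The paper states Lemma~\ref{lem:gensuper} without proof, treating it as a standard structural fact about semisimple superalgebras. Your proposal is therefore not being compared against any argument in the paper; it stands on its own, and as a sketch it is correct.

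A few remarks on the details. Your argument for (ii) is the cleanest way to do it: semisimplicity of $\mathcal{A}$ makes $\mathcal{A}e$ a direct sum of simples, and a nontrivial decomposition would produce a nontrivial even idempotent in $\End_{\mathcal{A}}(\mathcal{A}e)\cong (e\mathcal{A}e)^{\mathrm{op}}$, contradicting primitivity of $e$. The identification of $e\mathcal{A}e$ with $\mathbb{C}$ or $C_1$ is not strictly needed for (ii) itself, but it is exactly what feeds into (iii).

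For (iii), the isomorphism $\Hom_{\mathcal{A}}(\mathcal{A}e,M)\cong eM$ is indeed the whole point, and your multiplicity count is right once one remembers that dimensions and multiplicities here live in $\mathbb{Z}[\varepsilon]/(\varepsilon^2-1)$. In the M-type case $\dim\End_{\mathcal{A}}(\mathcal{A}e)=1$ and the copies of $\mathcal{A}e$ and $\Pi\mathcal{A}e$ in $M$ contribute $1$ and $\varepsilon$ respectively to $\dim eM$; in the Q-type case $\dim\End_{\mathcal{A}}(\mathcal{A}e)=\theta$ and each copy contributes $\theta$. The formula absorbs both cases uniformly, as you note.

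For (i), writing each Wedderburn block as $L\boxtimes_{\End(L)}L^*$ is the standard double-centralizer identification; just be sure you mean the $\mathcal{A}$-$\mathcal{A}$ bimodule structure on the regular representation, so that the right action corresponds to $L^*$ as a right module.
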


In what follows we also use several facts about representation theory of the Lie superalgebra $\q(n)$.
We call a weight $\kappa$ integral dominant if the irreducible $\q(n)$-module $L_n(\kappa)$ with highest weight
$\kappa$ is finite-dimensiional and can be lifted to the representation of the algebraic supergroup $Q(n)$.
It follows from \cite{P} that the integral dominant weights are of the form $a_1\delta_1+\dots+a_n\delta_n$, with $a_i\in\mathbb Z$
satisfying the conditions
\begin{enumerate}
\item if $a_i\neq 0$, then $a_i>a_{i+1}$;
\item if $a_i=0$, then $a_i\geq a_{i+1}$.
\end{enumerate}

Let $M_n(\kappa)$ denote the Verma module with highest weight $\kappa$ and $X_n(\kappa)$ be the maximal finite-dimensional quotient of $M_n(\kappa)$.
Then $X_n(\kappa)$ has the following geometric interpretation. Let $P_\kappa$  be the maximal parabolic subgroup of $Q(n)$ such that $\kappa$ induces a one-dimensional
representation of the even subgroup $(P_\kappa)_0$. Let $\mathcal O(\kappa)$ be the vector bundle over $Q(n)/P_\kappa$ corresponding to  the irreducible representation of $P_\kappa$ with character $-\kappa$. Then
$$X_n(\kappa)\simeq H^0(Q(n)/P_\kappa, \mathcal O(\kappa))^*$$ (see for example Lemma 2 in \cite{GS}).
Certain bounds for the multiplicities of the simple $Q(n)$-subquotients  of $ H^i(Q(n)/P_\kappa, \mathcal O(\kappa))^*$ can be deduced from \cite{PS}. 
We will use the following statement about the structure of $X_n(\kappa)$ which follows from these bounds.

\begin{proposition}\label{prop:cohomology} Let $\kappa=a_1\delta_1+\dots+a_n\delta_n$ be an integral dominant weight such that
$a_1>a_2>\dots>a_k>0$, $a_{k+1}=\dots=a_{k+r}=0$, $0>a_{k+r+1}>\dots>a_n$.
\begin{enumerate}
\item[(i)] The length of $X_n(\kappa)$ is at most $2^{a_1+\dots+a_k-a_{k+r+1}-\dots-a_n}$;
\item[(ii)] Assume that $r>a_1+\dots+a_k-a_{k+r+1}-\dots-a_n$ and $[X_n(\kappa):L_n(0)]\neq 0$. Then $\kappa=0$ or $\kappa = \delta_1-\delta_n$.
\end{enumerate}
\end{proposition}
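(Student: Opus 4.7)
The plan is to combine the identification $X_n(\kappa) \simeq H^0(Q(n)/P_\kappa, \mathcal{O}(\kappa))^*$ with the multiplicity bounds of \cite{PS} for simple subquotients appearing in $H^\bullet(Q(n)/P_\kappa, \mathcal{O}(\kappa))^*$. Those bounds are phrased in terms of the exterior algebra on the opposite odd nilradical $\mathfrak{u}_\kappa^-\subset\mathrm{Lie}(Q(n))$, so the proof reduces to a combinatorial computation inside $\Lambda^\bullet\mathfrak{u}_\kappa^-$.

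For part (i), I would first compute $\dim\mathfrak{u}_\kappa^-$ by enumerating the odd roots between the three block types (strictly positive, zero, strictly negative $a_i$'s). A direct count yields $\dim\mathfrak{u}_\kappa^- = a_1+\dots+a_k - a_{k+r+1}-\dots-a_n$, after which the \cite{PS} bound immediately gives length at most $\dim\Lambda^\bullet\mathfrak{u}_\kappa^- = 2^{\dim\mathfrak{u}_\kappa^-}$, which is the desired estimate.

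For part (ii), I would track which wedges in $\Lambda^\bullet\mathfrak{u}_\kappa^-$ can contribute to the isotypic component of the trivial module. A wedge contributing to $L_n(0)$ must carry $\h_0$-weight exactly $-\kappa$; since each odd root vector in $\mathfrak{u}_\kappa^-$ carries weight supported in two distinct blocks, this imposes strong numerical constraints. The hypothesis $r > a_1+\dots+a_k - a_{k+r+1}-\dots-a_n$ means the number of zero coordinates strictly exceeds the total available weight shift, so the admissible wedge must be either empty (yielding $\kappa=0$) or a single odd root vector that transfers weight directly between the extreme indices (yielding $\kappa=\delta_1-\delta_n$).

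The main obstacle is executing the combinatorial case analysis in (ii) rigorously: the \cite{PS} estimates are only upper bounds, so showing that the two listed values of $\kappa$ are the only survivors requires explicitly enumerating all wedges of $\h_0$-weight $-\kappa$ under the stated numerical hypothesis on $r$. I expect this to reduce to a clean pigeonhole argument once the block decomposition of $\mathfrak{u}_\kappa^-$ is written out, but the bookkeeping of the queer sign conventions and the parity of weight spaces should be done with care.
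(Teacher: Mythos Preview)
The paper does not give its own proof of this proposition; it simply records that the statement ``follows from these bounds'' in \cite{PS}. So there is no detailed argument in the paper to compare against, and the question is whether your outline would actually work.

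There is a genuine gap in part (i). Your claimed identity
\[
\dim\mathfrak{u}_\kappa^- \;=\; a_1+\dots+a_k - a_{k+r+1}-\dots-a_n
\]
cannot hold: the odd nilradical of $P_\kappa$ depends only on the block structure of $\kappa$ (i.e.\ on which coordinates coincide), not on the actual integers $a_i$. Concretely, with the block decomposition $\{1\},\dots,\{k\},\{k+1,\dots,k+r\},\{k+r+1\},\dots,\{n\}$ one has $\dim(\mathfrak{u}_\kappa^-)_{\bar 1}=\binom{n}{2}-\binom{r}{2}$, which grows with $n$. For instance, for $\kappa=3\delta_1$ in $\q(2)$ the odd nilradical is one-dimensional, not three-dimensional. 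Consequently the crude estimate $\text{length}\le\dim\Lambda^\bullet(\mathfrak{u}_\kappa^-)_{\bar 1}$ gives a bound that blows up with $n$, whereas the whole point of (i) --- and its use in Lemma~\ref{lm:finitelength} --- is that the bound is \emph{independent of $n$}. The \cite{PS} estimates you need are sharper than a raw exterior-algebra count: they constrain which highest weights $\nu$ can occur as subquotients of $H^0$, and it is this combinatorial restriction on $\nu$ (not a dimension count of $\mathfrak{u}_\kappa^-$) that produces the exponent $\sum_i|a_i|$.

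The same misidentification undermines your sketch for (ii): the weight-balancing argument you describe is on the right track in spirit, but it has to be run against the actual \cite{PS} constraints on possible subquotient highest weights, not against wedges in $\Lambda^\bullet\mathfrak{u}_\kappa^-$. You should go back to \cite{PS}, extract the precise description of which $L_n(\nu)$ can occur in $H^0(Q(n)/P_\kappa,\mathcal O(\kappa))^*$, and redo both parts from that.
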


\section{ Category $\operatorname{Trep}\q(\infty)$} 
\subsection{Lie superalgebra $\q(\infty)$.}
Let $V=V_0\oplus V_1$ and $W=W_0\oplus W_1$ be two
countable-dimensional supervector spaces, equipped with an even non-degenrate pairing $$(\cdot,\cdot):W\times V\to \mathbb C.$$
Denote by $1_W$ and $1_V$ the identity endomorphisms on $W$ and $V$, respectively. Let $P:V\to V$ be an odd linear operator such that $P^2=-1_V$. Define the action of $P$ on $W$ by setting
$$(Pw,v)=-(-1)^{p(w)}(w,Pv).$$
Note that $P^2|_{W}=1_W$.

Following \cite{Mc}, we fix dual bases $\{e_i,\,i\in\mathbb Z\setminus 0\}$ of $V_0$ and $\{f_i,\,i\in\mathbb Z\setminus 0\}$ of $W_0$ such that
$(f_i,e_j)=\delta_{ij}$. Set $\bar e_i=Pe_i$ and $\bar f_i=P f_i$. Then we have $(\bar f_i,\bar e_j)=\delta_{ij}$.

Let $\q(\infty)$ be the Lie superalgebra of finitary linear operators in $\End(V)\oplus\End(W)$ which  satisfy
$$(Xw,v)=-(-1)^{p(w)p(X)}(w,Xv),\quad [X,P]=0.$$
Henceforth we set  $\g = q(\infty)$.

One can easily see that $V$ and $W$ are $\g$-modules. We denote by $T^{p,q}$ the tensor product $V^{\otimes p}\otimes W^{\otimes q}$ which is also a $\g$-module.
One can easily check that
$$T^{1,1}=V\otimes W\simeq\g\oplus\Pi\g,$$ where $\g$ is considered as the adjoint $\g$-module.
We also have that
$$
\g = \Span_{\mathbb C} \{e_i\otimes f_j+\bar e_i\otimes \bar f_j,  e_i\otimes \bar f_j+\bar e_i\otimes f_j \; | \; i,j\in\mathbb Z\setminus 0\}.
$$

Let $\g_n\simeq\q(n)$ be the Lie subalgebra spaned of $e_i\otimes f_j+\bar e_i\otimes \bar f_j$
and $e_i\otimes \bar f_j+\bar e_i\otimes f_j$ for all $-\lfloor\frac{n}{2}\rfloor \leq i,j\leq \lceil\frac{n}{2}\rceil$. 
Then $\q(\infty)$ is the direct limit 
$$\q(\infty)=\lim_{\longrightarrow}\g_n.$$ 
Denote by $\cc_n$ the centralizer of $\g_n$ in $\g$. Note that for all $n$,  $\cc_n$ is isomorphic to $\q(\infty)$.

\subsection{Large annihilator condition}
Define a left exact functor $\Gamma_n:\g-\md\to \g_n-\md$ by setting $$\Gamma_n(M):=M^{\cc_n}.$$
The direct limit $$\Gamma:=\displaystyle\lim_{\longrightarrow}\Gamma_n:\g-\md\to\g-\md$$
is also a left exact functor.

Clearly, we have a canonical embedding $\Gamma(M)\hookrightarrow M$. We say that $M$ satisfies the {\it large annihilator condition} if $\Gamma(M)=M$.
Note that modules satisfying this condition form an abelain subcategory in $\g-\md$. 
Furthermore, one can easily see that if $M$ and $N$ satisfy the large annihilator condition, the tensor product $M\otimes N$ also satisfies it.
In particular, $V$, $W$, and hence $T^{p,q}$, satisfy the large annihilator condition. The following lemma is straightforward.

\begin{lemma}\label{lm:gammaadjoint} Let $M$ and $Y$ be $\g$-modules. Assume that $M$ satisfies the large annihilator condition. Then there is a canonical isomorphism
$$\Hom_\g(M,Y)\simeq\Hom_{\g}(M,\Gamma(Y)).$$
\end{lemma}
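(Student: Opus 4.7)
The plan is to show that $\Gamma$ is right adjoint to the inclusion of the full subcategory of large-annihilator modules into $\g\text{-}\md$, with the adjunction realized by postcomposition with the canonical inclusion $\iota\colon \Gamma(Y)\hookrightarrow Y$. First I would verify that $\Gamma(Y)$ is actually a $\g$-submodule of $Y$, not merely a union of $\g_n$-submodules. For $y\in Y^{\cc_n}$ and an arbitrary $x\in \g$, I pick $m\ge n$ with $x\in \g_m$; because $\cc_m\subseteq \cc_n$, every $c\in \cc_m$ both commutes with $x$ and annihilates $y$, so $c(xy)=[c,x]y\pm x(cy)=0$, hence $xy\in Y^{\cc_m}\subseteq \Gamma(Y)$. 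Thus $\iota$ is a $\g$-module map, and postcomposition yields an injective natural transformation
\[
\iota_*\colon \Hom_\g(M,\Gamma(Y))\to \Hom_\g(M,Y),\qquad \psi\mapsto \iota\circ\psi.
\]

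For surjectivity, I would take $\phi\in \Hom_\g(M,Y)$ and show $\phi(M)\subseteq \Gamma(Y)$. Given any $m\in M$, the hypothesis $M=\Gamma(M)$ supplies some $n$ with $\cc_n\cdot m=0$, and then $\g$-equivariance gives $\cc_n\cdot \phi(m)=\phi(\cc_n\cdot m)=0$, so $\phi(m)\in Y^{\cc_n}\subseteq \Gamma(Y)$. Hence $\phi$ factors uniquely as $\iota\circ \widetilde{\phi}$ for a map $\widetilde{\phi}\colon M\to \Gamma(Y)$, which is automatically $\g$-equivariant because $\iota$ is an injective $\g$-equivariant map. This construction inverts $\iota_*$, and naturality in both arguments is immediate.

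The only substantive point is the verification that $\Gamma(Y)$ is closed under the full $\g$-action; everything else is formal from $\g$-equivariance of $\phi$ and the definition of the large annihilator condition. This closure step uses two features of the setup: the nesting $\cc_m\subseteq \cc_n$ for $m\ge n$, and the finitariness of elements of $\g$, which ensures that every $x\in\g$ lies in some $\g_m$. I expect this to be the only place where any genuine bookkeeping is required.
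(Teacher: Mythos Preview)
Your argument is correct and is precisely the straightforward verification the paper has in mind; indeed, the paper omits the proof entirely, declaring the lemma ``straightforward.'' Your two ingredients---that $\Gamma(Y)$ is a $\g$-submodule (using $\cc_m\subseteq\cc_n$ for $m\ge n$ and finitariness of $\g$) and that any $\g$-map from a large-annihilator module lands in $\Gamma(Y)$---are exactly what is needed, and nothing more.
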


We call a $\g$-module $M$ {\it integrable} if for any $n>0$ it can be lifted to a representation of algebraic group $Q(n)$ with the Lie superalgebra $\g_n$.

\begin{definition} The category $\operatorname{Trep}\g$  of \emph{tensor representations} of $\g$ is the full subcategory of $\g-\md$ whose objects $M$ satisfy the following properties.
\begin{enumerate}
\item $M$ is an integrable $\g$-module.
\item $M$ has finite length.
\item $M$ satisfies the large annihilator condition.
\end{enumerate}
\end{definition}

It is clear that $T^{p,q}$ satisfies (1) and (3). Furthermore, the restriction of $T^{p,q}$ to $\g_0$ has finite length, see Theorem 2.3 in \cite{PSt}. Hence  
$T^{p,q}$ has finite length as a $\g$-module. Therefore $T^{p,q}$ is an object of $\operatorname{Trep}\g$.

Consider the Cartan subalgebra $\h$ of $\g$ spanned by  $e_i\otimes f_i+\bar e_i\otimes \bar f_i$
and $e_i\otimes \bar f_i+\bar e_i\otimes f_i$ for $i\in\mathbb Z\setminus 0$. Note that the even part $\h_0$ of $\h$ is the diagonal subalgebra of $\g$.  Let  $\{\varepsilon_i, i\in\mathbb Z\setminus 0\}$ be the system in $\h^*_0$ dual to the basis $e_i\otimes f_i+\bar e_i\otimes \bar f_i$ of $\h_0$. 
Denote by $\Lambda$ the $\mathbb Z$-linear span of $\{\varepsilon_i, i\in\mathbb Z\setminus 0\}$.

\begin{lemma}\label{lm:weights} If $M\in \operatorname{Trep}\g$, then $M$ is $\h_0$-semisimple and the weights of $M$ belong to $\Lambda$.
\end{lemma}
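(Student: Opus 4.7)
The plan is to use the large annihilator condition together with integrability of $M$ to localize any vector inside a finite-dimensional module over some $\g_n$, and then exploit the centralizer relation $[\cc_n, \g_n] = 0$ to upgrade $(\h_n)_0$-semisimplicity to $\h_0$-semisimplicity.

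More precisely, let $m \in M$. By the large annihilator condition $M = \bigcup_n M^{\cc_n}$, so there is some $n$ with $m \in M^{\cc_n}$. Write $\h_0 = (\h_n)_0 \oplus (\h^{(n)})_0$ where $(\h_n)_0$ is the even Cartan of $\g_n$ and $(\h^{(n)})_0 \subset \cc_n$ is the even Cartan of $\cc_n$ (the span of $e_i \otimes f_i + \bar e_i \otimes \bar f_i$ for $|i| > n/2$). By construction $(\h^{(n)})_0$ acts by zero on $m$, so it suffices to prove that $m$ is a finite sum of $(\h_n)_0$-weight vectors lying in $M^{\cc_n}$.

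First I would invoke integrability: the $\g_n$-action on $M$ lifts to an algebraic $Q(n)$-action, hence every vector is contained in a finite-dimensional $Q(n)$-submodule, which in turn decomposes into $(\h_n)_0$-weight spaces with integral weights. Therefore $m = \sum_\mu m_\mu$ as a finite sum, with $m_\mu$ a $(\h_n)_0$-weight vector of integral weight $\mu \in \bigoplus_{-\lfloor n/2\rfloor \le i \le \lceil n/2\rceil,\, i\ne 0}\mathbb{Z}\varepsilon_i \subset \Lambda$. The key observation is then that the projection $m \mapsto m_\mu$ onto the $\mu$-weight space is $\cc_n$-equivariant: indeed $\cc_n$ commutes with $\g_n$, in particular with $(\h_n)_0$, so $\cc_n$ preserves each $(\h_n)_0$-weight space. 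Consequently $\cc_n \cdot m_\mu = (\cc_n \cdot m)_\mu = 0$, showing $m_\mu \in M^{\cc_n}$. Combined with the vanishing of $(\h^{(n)})_0$ on $M^{\cc_n}$, each $m_\mu$ is a genuine $\h_0$-weight vector whose weight is the extension of $\mu$ by zero, which lies in $\Lambda$.

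The only technical point to justify carefully is the local finiteness step — that $m$ lies in a finite-dimensional $Q(n)$-submodule — but this is immediate from the general fact that any representation of an affine algebraic (super)group is the union of its finite-dimensional subrepresentations, applied to the lift of the $\g_n$-action guaranteed by integrability. Everything else is formal bookkeeping about the orthogonal decomposition $\h_0 = (\h_n)_0 \oplus (\h^{(n)})_0$ and the centralizer relation, so I do not anticipate a serious obstacle.
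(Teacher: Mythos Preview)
Your argument is correct and is essentially a fully unpacked version of the paper's own proof. The paper simply notes that $M$ is semisimple over each $\h_n$ (by integrability) and that together with the large annihilator condition this forces $\h$-semisimplicity since $\h=\varinjlim\h_n$; your explicit use of $[\cc_n,\g_n]=0$ to keep the $(\h_n)_0$-weight components inside $M^{\cc_n}$, and your observation that the integrality of the resulting weights places them in $\Lambda$, are precisely the details the paper leaves implicit.
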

\begin{proof} Note that $M$ is semisimple over the Cartan subalgebra $\h_n$ of $\g_n$. Together with the large annihilator condition this implies
that $M$ is $\h$-semisimple since $\h$ is the direct limit of $\h_n$. 
\end{proof}

\subsection{Highest weight category}
Throughout the paper we will use the following ``exotic'' total order on $\mathbb Z\setminus 0$:
$$1\prec2\prec\dots\prec-2\prec-1.$$ In particular, the positive numbers are smaller than the negative ones.

Let $\mathfrak n\subset \g$ be the subalgebra spanned by  $e_i\otimes f_j+\bar e_i\otimes \bar f_j$
and $e_i\otimes \bar f_j+\bar e_i\otimes f_j$ for all $i\prec j$. Then $\bb=\mathfrak n\oplus\h$ is a Borel subalgebra of $\g$ and we can  define the category $\mathcal O$ with respect to $\bb$. More precisely,
$\mathcal O$ is the full subcategory of $\g$-modules consisting of finitely generated modules that are semisimple over $\h_0$, and that are $\mathfrak n$-locally nilpotent.

We denote by $\mathcal O'$ the full subcategory of $\mathcal O$ consisting of  modules which satisfy the 
large annihilator condition, and by $\mathcal O'_{int}$ the subcategory of $\mathcal O'$  of integrable modules.
It is easy to check that Lemma \ref{lm:weights} holds for the category $\mathcal O'$ , namely, that that the weights of all modules in  $\mathcal O'_{int}$ belong to $\Lambda$.

Suppose $L$ is a simple highest weight module in $\mathcal O'_{int}$. Then by \cite{P}
the highest weight of $L$ is of the form 
\begin{equation}\label{eqn:form}
a_1\varepsilon_1+\dots+a_k\varepsilon_k-a_{-l}\varepsilon_{-l}-\dots- a_{-1}\varepsilon_{-1}
\end{equation}
with positive integers $a_i$ such that $a_1>\dots >a_k$ and $a_{-1}>\dots >a_{-l}$.
Hence we have a bijection between the dominant weights in $\Lambda$ and the strict bipartions $(\lambda,\mu)$, where $\lambda=(a_1,\dots,a_k)$
and $\mu=(a_{-1},\dots,a_{-l})$.

For any strict partition $\lambda$ of $r$ we set $|\lambda|=r$, denote by $l(\lambda)$ the number of parts  (nonzero components) of $\lambda$, and by  $p(\lambda)$ the parity of $l(\lambda)$. For a strict bipartition $(\lambda,\mu)$ we set
$$p(\lambda,\mu)=p(\lambda)+p(\mu).$$

For simplicity, for small (bi)partitions, we will use their corresponding Young tableau. For example $\square$ will denote the strict partition $(1)$, and $(\square, \square)$ will stand for the strict bipartition $((1), (1))$.

\begin{lemma}\label{lem:parity} If $p(\lambda,\mu)=0$, then there exist two up to isomorphism simple modules $V(\lambda,\mu)$ and $\Pi V(\lambda,\mu)$ in $\mathcal O'_{int}$
with highest weight $(\lambda,\mu)$.
If $p(\lambda,\mu)=1$ then there is a unique up to isomorphism simple module $V(\lambda,\mu)$  in $\mathcal O'_{int}$ with highest weight $(\lambda,\mu)$, and this module is of {\rm Q}-type.   
\end{lemma}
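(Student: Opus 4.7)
The plan is to analyze the highest weight space $V_\kappa$ of any simple $V \in \mathcal{O}'_{int}$ with highest weight $\kappa$ as a module over a Clifford superalgebra built from $\h_1$, reduce via the large annihilator condition and simplicity to a finite nondegenerate Clifford algebra, and then read off the parity type from classical Clifford structure theory.

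First I would observe that $\mathfrak{n}$ annihilates $V_\kappa$ and $\h_0$ acts there by the scalar $\kappa$, so $\h_1$ acts on $V_\kappa$ through the Clifford superalgebra $\operatorname{Cl}(\h_1, B_\kappa)$, where $B_\kappa(H,H') := \kappa([H,H'])$. Explicitly, $B_\kappa(\bar H_i, \bar H_j) = \delta_{ij} a_i$, so the radical of $B_\kappa$ equals $\Span\{\bar H_i : a_i = 0\}$, and $B_\kappa$ is nondegenerate on $\h_1^S := \Span\{\bar H_i : i \in S\}$, where $S := \{1,\dots,k,-1,\dots,-l\}$.

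Next I would apply the large annihilator condition to a highest weight vector $v_\kappa$: there is $N$ with $\cc_N v_\kappa = 0$, whence $\bar H_i v_\kappa = 0$ for $|i| > N/2$, so the action factors through the finite Clifford algebra $\operatorname{Cl}(\h_1^{(N)}, B_\kappa)$, where $\h_1^{(N)}$ is the truncation to $|i| \le N/2$. The crucial reduction is that $V_\kappa$ is \emph{simple} as a $\operatorname{Cl}(\h_1^{(N)}, B_\kappa)$-module: any Clifford submodule $W \subsetneq V_\kappa$ would be $\bb$-invariant, and a PBW-based weight-space computation yields $(U(\g)\cdot W)_\kappa = W$, producing a proper nonzero $\g$-submodule of $V$ and contradicting simplicity. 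Via the structure decomposition $\operatorname{Cl}(\h_1^{(N)}, B_\kappa) \cong \operatorname{Cl}_S \otimes \Lambda^{\bullet}(\operatorname{rad}B_\kappa)$, the Jacobson radical is the nilpotent ideal generated by $\operatorname{rad}B_\kappa$ and hence annihilates $V_\kappa$. Therefore each $\bar H_i$ with $a_i = 0$ acts by zero, and $V_\kappa$ is a simple module over the reduced algebra $\operatorname{Cl}_S := \operatorname{Cl}(\h_1^S, B_\kappa|_S)$, which carries a nondegenerate form.

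Classical Clifford structure theory then gives the dichotomy: when $|S| = l(\lambda) + l(\mu)$ is even (i.e.\ $p(\lambda,\mu) = 0$), $\operatorname{Cl}_S$ is a matrix superalgebra whose two simple modules are of M-type and related by parity shift, producing $V(\lambda,\mu)$ and $\Pi V(\lambda,\mu)$; when $|S|$ is odd ($p(\lambda,\mu) = 1$), $\operatorname{Cl}_S$ is of queer type with a unique simple module of Q-type. For existence I would choose a simple $\operatorname{Cl}_S$-module of the prescribed parity, extend to a $\bb$-module by letting $\mathfrak{n}$ act trivially and $\bar H_i$ with $i \notin S$ act as zero, induce to $\g$, and take the unique simple quotient; the dominance conditions on $\kappa$ secure integrability through the $\q(n)$ theory, and the construction secures the large annihilator property.

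I expect the main obstacle to be the combined Jacobson-radical/simplicity step: the large annihilator condition alone cannot eliminate the intermediate-range degenerate generators $\bar H_i$ with $a_i = 0$ and $|i| \le N/2$, so one must invoke simplicity of $V$ through the delicate weight-space identity $(U(\g)\cdot W)_\kappa = W$. A secondary technicality will arise in verifying the large annihilator condition for the simple quotient in the existence argument: one must show that every negative root vector $X \in \cc_n \cap \mathfrak{n}^-$ carries $v_\kappa$ into the maximal proper submodule of the induced Verma-type module.
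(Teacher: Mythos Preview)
Your approach is correct and follows essentially the same route as the paper: both arguments reduce to identifying the highest weight space as a simple module over a Clifford algebra and then invoking the structure theory of nondegenerate Clifford superalgebras to read off the M-type versus Q-type dichotomy from the parity of $l(\lambda)+l(\mu)$.

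The paper's execution is considerably leaner, however, and you should note the simplification. Once you know that the highest weight space $C(\lambda,\mu)$ is simple over $U(\h)$ (which is exactly your PBW argument), you do \emph{not} need the large annihilator condition or the Jacobson radical of a finite truncation to kill the degenerate generators. For each $i$ with $a_i=0$, the odd element $\bar H_i$ supercommutes with all $H_j$ and with all $\bar H_j$ for $j\neq i$, and $\bar H_i^2$ acts by the scalar $a_i=0$; hence $\ker\bar H_i$ is a nonzero $U(\h)$-submodule of the simple module $C(\lambda,\mu)$, so it is everything. This disposes of all degenerate generators at once, with no finiteness needed. Your detour through $\cc_N$ is harmless but superfluous, and your phrase ``so the action factors through the finite Clifford algebra'' is premature as written (annihilating one vector $v_\kappa$ does not by itself show the action on all of $V_\kappa$ factors), though the subsequent simplicity step rescues it. Your explicit remarks on existence and on the large annihilator property of the simple quotient go beyond what the paper spells out; the paper treats these as routine.
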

\begin{proof} Let $k = l(\lambda)$ and $l = l(\mu)$. Let $M$  be a simple module of highest weight $(\lambda,\mu)$, and let  $C(\lambda,\mu)$ be the $(\lambda,\mu)$-weight space of $M$. Then $C(\lambda,\mu)$ is a simple $U(\h)$-module.
It is easy to see that $e_i\otimes f_i+\bar e_i\otimes \bar f_i$ and $e_i\otimes \bar f_i+\bar e_i\otimes f_i$ act by zero on  $C(\lambda,\mu)$
if $k\prec i\prec {-l}$. Thus $C(\lambda,\mu)$ is a simple module over the Clifford algebra with $k+l$ generators. The statement follows from the theory of Clifford algebras. Namely, if $k+l$ is even, then the corresponding 
Clifford algebra is a matrix algebra equipped with $\mathbb Z_2$-grading and hence it has two up to isomorphism simple modules, $V$ and $\Pi V$. If $k+l$ is odd,
the Clifford algebra is a direct sum of two matrix algebras, however it is simple as a superalgebra and has unique up to isomorphism simple module $V \simeq \Pi V$.
\end{proof}

\begin{lemma}\label{lm:finitelength} Every module in $\mathcal O'_{int}$ has  finite length.
\end{lemma}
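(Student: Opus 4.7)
The plan is to reduce to the case of a highest weight module in $\mathcal O'_{int}$ and extract a length bound from Proposition \ref{prop:cohomology}.

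First I would reduce to a highest weight module. Since $M$ is finitely generated and $\h_0$-semisimple, pick finitely many weight-vector generators $v_1,\dots,v_r$. The large annihilator condition applied to each $v_i$ gives some $\cc_{n_i}$ killing $v_i$, and since $\cc_{n_i}$ contains the Cartan elements $e_j\otimes f_j + \bar e_j\otimes \bar f_j$ for all but finitely many $j$, each generator weight $\mu_i$ has finite support in the $\varepsilon_j$. Combined with $\mathfrak{n}$-local nilpotency, the standard category $\mathcal O$ argument then produces a finite filtration of $M$ whose successive quotients are highest weight modules, so it suffices to treat a highest weight module $N \in \mathcal O'_{int}$ with highest weight vector $v^+$ of weight $\nu$. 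The same large annihilator argument forces $\nu$ to have finite support, and integrability makes $\nu$ a strict bipartition $(\lambda,\mu)$ in the sense of Lemma \ref{lem:parity}.

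Next I would bound $\ell_\g(N)$ via restriction. Choose $n$ with $\cc_n v^+ = 0$; for $m \geq n$, the $\g_m$-submodule $N^{(m)} := U(\g_m)v^+$ is a highest weight integrable $\g_m$-module of highest weight $\nu$, hence a quotient of $X_m(\nu)$, and Proposition \ref{prop:cohomology}(i) yields
$$\ell_{\g_m}(N^{(m)}) \leq 2^{|\lambda|+|\mu|}$$
uniformly in $m$. To transfer this to a $\g$-length bound, take any strict chain of $\g$-submodules $0 \subsetneq N_1 \subsetneq \cdots \subsetneq N_s = N$, choose $w_j \in N_j \setminus N_{j-1}$, and use the large annihilator condition to pick $m \geq n$ with $\cc_m w_j = 0$ for all $j$. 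Then the subspaces $N_j \cap N^{\cc_m}$ form a strict chain of $\g_m$-submodules of $N^{\cc_m}$ (strictness because $w_j \in (N_j \cap N^{\cc_m}) \setminus N_{j-1}$), so $s \leq \ell_{\g_m}(N^{\cc_m})$.

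The main obstacle is bounding $\ell_{\g_m}(N^{\cc_m})$, which a priori could exceed $\ell_{\g_m}(N^{(m)})$. I would approach this by showing $N^{\cc_m} = N^{(m)}$ for $m$ large enough. Viewing $N$ as a quotient of the Verma module $U(\mathfrak{n}^-) v^+$, any root vector in $\mathfrak{n}^-$ corresponding to a root $\varepsilon_j - \varepsilon_i$ with both indices outside the $\g_m$-range lies in $\cc_m$ and kills $v^+$, so every PBW monomial producing a nonzero vector in $N$ involves only root vectors with at least one index in the $\g_m$-range. A careful accounting---enlarging $m$ to absorb the support of $\nu$ and the ``boundary'' effects---should then identify $N^{\cc_m}$ with $U(\g_m)v^+$, at which point the previous paragraph gives $\ell_\g(N) \leq 2^{|\lambda|+|\mu|}$. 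This combinatorial lift is the delicate step.
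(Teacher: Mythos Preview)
Your overall strategy matches the paper's: reduce to a highest weight module $N$, cover it by the $\g_m$-submodules $N^{(m)}:=U(\g_m)v^+$ (the paper writes $Y_m(\lambda,\mu)$), bound $\ell_{\g_m}(N^{(m)})$ uniformly via Proposition~\ref{prop:cohomology}(i), and deduce a bound on $\ell_\g(N)$. The only divergence is in how you pass from the $\g_m$-bound to the $\g$-bound, and that is where you have manufactured an unnecessary obstacle.

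The paper's key observation is simply that $N=\displaystyle\lim_{\longrightarrow}N^{(m)}=\bigcup_m N^{(m)}$: since $N$ is a highest weight module it is generated by $v^+$ over $\g$, and $\g=\bigcup_m\g_m$, so every element of $N$ already lies in some $N^{(m)}$. Given a strict chain $0\subsetneq N_1\subsetneq\cdots\subsetneq N_s=N$ and witnesses $w_j\in N_j\setminus N_{j-1}$, one therefore chooses $m$ large enough that all $w_j\in N^{(m)}$; then $\{N_j\cap N^{(m)}\}_j$ is a strict chain of $\g_m$-submodules of $N^{(m)}$, giving $s\le\ell_{\g_m}(N^{(m)})\le 2^{|\lambda|+|\mu|}$.

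Your detour through $N^{\cc_m}$ is what creates the ``main obstacle'' you flag. You invoked the large annihilator condition to place the $w_j$ inside $N^{\cc_m}$, and were then forced to bound $\ell_{\g_m}(N^{\cc_m})$, leading you to attempt the identification $N^{\cc_m}=N^{(m)}$ by a PBW argument. None of this is needed: the direct-limit statement above already places the $w_j$ inside $N^{(m)}$, where your own length bound applies immediately. The equality $N^{\cc_m}=N^{(m)}$ is not established (or used) at this point in the paper, and your sketch for it is genuinely incomplete; but once you replace $N^{\cc_m}$ by $N^{(m)}$ the argument closes without it.
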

\begin{proof} For a bipartition $(\lambda, \mu)$, denote by $M(\lambda,\mu)$ the corresponding Verma module. Let $X(\lambda,\mu)$ be the maximal quotient of $M(\lambda,\mu)$  which is in $\mathcal O'_{int}$. 
Since every module in $\mathcal O'_{int}$ has a finite filtration by highest weight modules, it suffices to check that
$X(\lambda,\mu)$ has finite length. 

Let $n>|\lambda|+|\mu|$. Let $Y_n(\lambda,\mu)$ be the $\g_n$-submodule of $X(\lambda,\mu)$ generated by a highest weight vector of $X(\lambda,\mu)$.
Then $$X(\lambda,\mu)=\lim_{\longrightarrow}Y_n(\lambda,\mu).$$
On the other hand, $Y_n(\lambda,\mu)$ is a quotient of $X_n(\lambda,\mu)$.
By Proposition \ref{prop:cohomology}(i) the length of $X_n(\lambda,\mu)$ stabilizes. Hence  $X(\lambda,\mu)$ has finite length. 
\end{proof}

\subsection{Polynomial representations and Sergeev duality} By definition, the \emph{polynomial representations} of $\g$ are  those which occur in tensor powers of $V$.
We recall some facts related to the Sergeev duality. It is proven in \cite{S} that the centralizer ${\mathcal{H}}_r$ of $\g$ in $V^{\otimes r}$ is a semisimple
superalgebra which we call the \emph{Sergeev algebra}. 
Irreducible representations of $\mathcal H_r$ (up to change of parity) are parametrized by strict partitions of size $r$. We denote by
$S(\lambda)$ the irreducible representation of the Sergeev algebra ${\mathcal{H}}_r$ associated with $\lambda$.
Note that $S(\lambda)$ is of M-type (respectively, Q-type) if $p(\lambda)=0$ (respectively, $p(\lambda)=1$). By $e(\lambda)$ we denote a primitive idempotent
of $\mathcal H_r$ such that $\mathcal H_re(\lambda)\simeq S(\lambda)$.

For any $r>0$ we have a decomposition:\footnote{Although the result of Sergeev is for finite rank queer Lie superalgebras, it is easy to extend it to $\q (\infty)$ by taking direct limits.}
\begin{equation}\label{eq:sergeev}
V^{\otimes r}=\bigoplus_{p(\lambda)=0} V(\lambda, \emptyset)\boxtimes S(\lambda)\oplus\bigoplus_{p(\lambda)=1} V(\lambda, \emptyset)\widehat{\boxtimes}S(\lambda),
\end{equation}
where $\lambda$ runs over the set of all strict partition of $r$.

Similarly, we have
\begin{equation}\label{eq:sergeev1}
W^{\otimes r}=\bigoplus_{p(\lambda)=0} V(\emptyset, \lambda)\boxtimes S(\lambda)\oplus\bigoplus_{p(\lambda)=1} V(\emptyset, \lambda)\widehat{\boxtimes}S(\lambda).
\end{equation}

For simplicity, we set $V(\lambda):=V(\lambda,\emptyset)$ and $W(\lambda):= V(\emptyset,\lambda)$.

\subsection{Littlewood-Richardson coefficients}
By $f_{\lambda, \nu}^{\mu}$ we denote the Littlewood-Richardson coefficients  of type $Q$:
$$f_{\lambda, \nu}^{\mu} =\dim\Hom_{\mathfrak g} (V(\mu),V(\lambda) \otimes V(\nu)).$$

Another way to define Littlewood-Richardson coefficients is by using the branching law for the Sergeev algebra.
Henceforth we set $\mathcal H_{p,q} = \mathcal H_p \otimes \mathcal H_q$.

\begin{lemma}\label{lem:LRind} If $|\lambda| = p$ and $|\nu| = r$, then
$$ f_{\lambda, \nu}^{\mu}=
\dim\Hom_{\mathcal H_{p,r}}(S(\lambda)\boxtimes S(\nu), S(\mu))= \dim\Hom_{\mathcal H_{p+r}}(\Ind^{\mathcal H_{p+r}}_{\mathcal H_{p,r}}S(\lambda)\boxtimes S(\nu)).
$$
\end{lemma}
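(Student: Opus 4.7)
The plan is to compare two decompositions of $V^{\otimes(p+r)}$, one coming from Sergeev duality applied directly to the full tensor power, and the other from applying Sergeev duality separately to each factor. First I would invoke equation (\ref{eq:sergeev}) for $\mathcal H_{p+r}$ to write
\[
V^{\otimes (p+r)} \;\simeq\; \bigoplus_{\mu} V(\mu) \star S(\mu),
\]
where $\star = \boxtimes$ or $\widehat\boxtimes$ depending on $p(\mu)$ and $\mu$ ranges over strict partitions of $p+r$. Then I would apply (\ref{eq:sergeev}) to $V^{\otimes p}$ and $V^{\otimes r}$ separately and take the tensor product to get the $(\mathfrak g, \mathcal H_{p,r})$-bimodule decomposition
\[
V^{\otimes (p+r)} \;\simeq\; \bigoplus_{\lambda,\nu} \bigl(V(\lambda)\otimes V(\nu)\bigr) \star \bigl(S(\lambda)\boxtimes S(\nu)\bigr),
\]
with $|\lambda|=p$ and $|\nu|=r$.

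Next I would apply $\Hom_{\mathfrak g}(V(\mu),-)$ to both sides. On the first decomposition the result is essentially $S(\mu)$ (with the appropriate $C_1$-twist if $\mu$ is of Q-type), viewed as a $\mathcal H_{p,r}$-module by restriction. On the second decomposition the result is $\bigoplus_{\lambda,\nu} \mathbb C^{f^\mu_{\lambda,\nu}} \otimes (S(\lambda)\boxtimes S(\nu))$, by the very definition of $f^\mu_{\lambda,\nu}$. Comparing these and then applying Lemma \ref{lem:gensuper}(iii) to the semisimple superalgebra $\mathcal H_{p,r}$ converts the multiplicity of the simple summand $S(\lambda)\boxtimes S(\nu)$ in $S(\mu)$ into $\dim\Hom_{\mathcal H_{p,r}}(S(\lambda)\boxtimes S(\nu), S(\mu))$, yielding the first equality.

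The second equality is a direct application of Frobenius reciprocity for the subalgebra inclusion $\mathcal H_{p,r}\hookrightarrow \mathcal H_{p+r}$, where $\mathcal H_{p+r}$ is free of finite rank over $\mathcal H_{p,r}$, so that induction is both left and right adjoint to restriction up to a parity shift that matches the conventions used above.

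The main obstacle will be bookkeeping of parities: the symbols $\boxtimes$ and $\widehat\boxtimes$ behave differently according to whether each of $\lambda,\nu,\mu$ is of M-type or Q-type, and one must verify that the identifications above are compatible at the level of $\mathbb Z[\varepsilon]/(\varepsilon^2-1)$-valued multiplicities. In particular, when both $\lambda$ and $\nu$ are of Q-type, $S(\lambda)\boxtimes S(\nu)$ decomposes as $(S(\lambda)\widehat\boxtimes S(\nu))\oplus \Pi(S(\lambda)\widehat\boxtimes S(\nu))$, and one must check that the factor of $\theta=1+\varepsilon$ absorbed into $V(\lambda)\otimes V(\nu)$ is consistent with the corresponding $C_1$-twist appearing in the first decomposition. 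The super Schur lemma (Lemma \ref{lem:Schur}) and Lemma \ref{lem:gensuper} handle these adjustments uniformly once one writes dimensions in $\mathbb Z[\varepsilon]/(\varepsilon^2-1)$.
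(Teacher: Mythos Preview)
Your proposal is correct and follows essentially the same route as the paper. The paper applies the idempotent $e(\lambda)\otimes e(\nu)$ to the Sergeev decomposition of $V^{\otimes(p+r)}$ and then reads off $f^\mu_{\lambda,\nu}$ via Lemma~\ref{lem:gensuper}(iii), whereas you apply $\Hom_{\mathfrak g}(V(\mu),-)$ to two decompositions and compare; these are dual ways of extracting the same multiplicity in a double-centralizer situation, and both reduce the parity bookkeeping to the case split $p(\lambda)p(\nu)=0$ versus $p(\lambda)p(\nu)=1$ before invoking Frobenius reciprocity for the second equality.
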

\begin{proof}
Then we have
$$V(\lambda)=e(\lambda)V^{\otimes p},\, V(\nu)=e(\nu)V^{\otimes r}$$
and
$$V(\lambda)\otimes V(\nu)=e(\lambda)\otimes e(\nu) (V^{\otimes (p+r)}).$$ 
By Sergeev duality we obtain
$$V(\lambda)\otimes V(\nu)=
\bigoplus_{|\mu|=p+r,p(\mu)=0} V(\mu)\boxtimes (e(\lambda)\otimes e(\nu)(S(\mu)))
\oplus\bigoplus_{|\mu|=p+r,p(\mu)=1} V(\mu)\widehat\boxtimes (e(\lambda)\otimes e(\nu)(S(\mu))).$$
If  $p(\lambda)p(\nu)=0$, then $e(\lambda)\otimes e(\nu)$ is a primitive idempotent in $\mathcal H_{p,r}$. 
By Lemma \ref{lem:gensuper}(iii) we have that
$$\dim e(\lambda)\otimes e(\nu)(S(\mu))=\dim\Hom_{\mathcal H_{p,r}}(S(\lambda)\boxtimes S(\nu), S(\mu)).$$
If $p(\lambda)p(\nu)=1$, then $e(\lambda)\otimes e(\nu)$ is a sum of two primitive idempotents corresponding to two irreducible representations of 
$\mathcal H_{p,r}$ such that one is obtained from the other by parity switch.
We again have that
$$\dim e(\lambda)\otimes e(\nu)(S(\mu))=\dim\Hom_{\mathcal H_{p,r}}(S(\lambda)\boxtimes S(\nu), S(\mu)).$$

The second equality is a consequence of Frobenius reciprocity.
\end{proof}
\begin{corollary}\label{cor:LR} If $f_{\lambda, \nu}^{\mu}\neq 0$, then $|\lambda|+|\nu|=|\mu|$.
\end{corollary}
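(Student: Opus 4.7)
The plan is to deduce the corollary directly from Sergeev duality. By the decomposition (\ref{eq:sergeev}), the module $V(\lambda) = V(\lambda, \emptyset)$ occurs as a direct summand of $V^{\otimes p}$ for $p = |\lambda|$, and similarly $V(\nu)$ is a direct summand of $V^{\otimes r}$ for $r = |\nu|$. Consequently, $V(\lambda) \otimes V(\nu)$ embeds as a direct summand of $V^{\otimes(p+r)}$.

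Applying Sergeev duality once more, this time to $V^{\otimes(p+r)}$, the only simple $\g$-modules appearing with nonzero multiplicity are of the form $V(\mu, \emptyset) = V(\mu)$ with $|\mu| = p+r$. Since the $V(\mu)$'s attached to distinct strict partitions are pairwise non-isomorphic, any $V(\mu)$ with $f^{\mu}_{\lambda,\nu} \neq 0$ is forced to satisfy $|\mu| = p+r = |\lambda| + |\nu|$.

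No real obstacle arises; the argument is a one-line bookkeeping consequence of the Sergeev decomposition. As a sanity check one can give an alternative weight-theoretic proof: every weight of $V$ is of the form $\varepsilon_i$, hence every weight of $V^{\otimes n}$, written in the basis $\{\varepsilon_i\}$, has coefficient-sum $n$. The highest weight of $V(\mu)$ is $\sum \mu_i \varepsilon_i$, with coefficient-sum $|\mu|$, while every weight of $V(\lambda) \otimes V(\nu) \subset V^{\otimes(|\lambda|+|\nu|)}$ has coefficient-sum $|\lambda| + |\nu|$. So any highest weight vector of a $V(\mu)$-summand must satisfy $|\mu| = |\lambda|+|\nu|$, giving the same conclusion.
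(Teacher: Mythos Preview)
Your proof is correct and is essentially the same as the paper's: the corollary is placed immediately after Lemma~\ref{lem:LRind}, whose proof already writes $V(\lambda)\otimes V(\nu)=e(\lambda)\otimes e(\nu)\,V^{\otimes(p+r)}$ and then applies Sergeev duality to $V^{\otimes(p+r)}$, so that only $V(\mu)$ with $|\mu|=p+r$ can occur. Your additional weight-counting sanity check is also fine and gives the same conclusion by an even more elementary route.
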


Note that by Theorem 1.11 in \cite{GJKKK}
\begin{equation}    \label{equ:tensorproduct}
f_{\square, \nu}^{\mu}  = \begin{cases} 0, \,\,\text{if}\,\, \nu \notin \mu - \square,\\ \theta^{p(\nu)p(\mu)} \theta
,\,\,\text{if}\,\,  \nu \in \mu - \square. \end{cases}
\end{equation}

\subsection{The category $\mathcal O'_{int}$ coincides with $\operatorname{Trep}\g$}

\begin{lemma}\label{lem:finitegen} Let $M$ be a $\g$-module isomorphic to $V^{\oplus n}$. 
Then there exists a subspace $U\subset V$ with $\dim U = n\theta$, such that the symmetric algebra $S(V^{\oplus n})$ is generated
by $S(U^{\oplus n})$ as $\g$-module.
\end{lemma}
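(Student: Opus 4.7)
The plan is to choose $U$ to be a $P$-stable, $\g_n$-invariant subspace of $V$ isomorphic to the natural $\q(n)$-module, so that $\dim U_0 = \dim U_1 = n$, whence $\dim U = n\theta$. Set $M := U(\g)\cdot S(U^{\oplus n})$, the $\g$-submodule of $S(V^{\oplus n})$ generated by $S(U^{\oplus n})$; the task is to prove $M = S(V^{\oplus n})$.

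First I reduce to finite rank. For each $N \geq n$, let $V_N \subset V$ be a $\g_N = \q(N)$-invariant subspace containing $U$ and isomorphic to the natural $\q(N)$-module, chosen compatibly so that $V = \bigcup_N V_N$, and hence $S(V^{\oplus n}) = \bigcup_N S(V_N^{\oplus n})$. Since $U \subset V_N$ and $\g_N \subset \g$, one has $U(\g_N)\cdot S(U^{\oplus n}) \subset M$. It therefore suffices to establish the finite-rank inclusion
$$U(\g_N)\cdot S(U^{\oplus n}) \supseteq S(V_N^{\oplus n}) \qquad \text{for every } N \geq n,$$
from which $M \supseteq \bigcup_N S(V_N^{\oplus n}) = S(V^{\oplus n})$.

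The proof of the finite-rank statement proceeds via a Howe-type duality. Writing $V_N^{\oplus n} = V_N \otimes \mathbb C^n$, the Lie algebra $\mathfrak{gl}(n) = \mathfrak{gl}(\mathbb C^n)$ acts on the second factor, commutes with $\g_N$, and visibly stabilizes $S(U^{\oplus n})$. The Howe duality for the pair $(\q(N), \mathfrak{gl}(n))$ on $S(\mathbb C^{N|N} \otimes \mathbb C^n)$ provides a multiplicity-free decomposition
$$S(V_N^{\oplus n}) = \bigoplus_\lambda L_N(\lambda) \otimes W^\lambda,$$
where $\lambda$ runs over strict partitions with $l(\lambda) \leq n$, $L_N(\lambda)$ is the simple $\g_N$-module of highest weight $\lambda$, and $W^\lambda$ is an irreducible $\mathfrak{gl}(n)$-module depending only on $\lambda$ and $\mathbb C^n$. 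Applied instead to $(\g_n, \mathfrak{gl}(n))$ acting on $U \otimes \mathbb C^n$, the same duality yields
$$S(U^{\oplus n}) = \bigoplus_\lambda L_n(\lambda) \otimes W^\lambda,$$
with the same indexing set and the same $W^\lambda$. Since the inclusion $S(U^{\oplus n}) \hookrightarrow S(V_N^{\oplus n})$ is $\g_n \times \mathfrak{gl}(n)$-equivariant and $L_n(\lambda)$ is simple over $\g_n$, its restriction to each isotypic component factors as $\varphi_\lambda \otimes \mathrm{id}_{W^\lambda}$ for a nonzero $\g_n$-map $\varphi_\lambda : L_n(\lambda) \hookrightarrow L_N(\lambda)$.

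Finally I conclude: since $L_N(\lambda)$ is simple as $\g_N$-module, $U(\g_N)\cdot L_n(\lambda) = L_N(\lambda)$, hence $U(\g_N)\cdot(L_n(\lambda) \otimes W^\lambda) = L_N(\lambda) \otimes W^\lambda$. Summing over $\lambda$ gives the finite-rank inclusion, and passing to the limit over $N$ completes the proof. The main technical input is the precise form of the Howe duality for $(\q(N), \mathfrak{gl}(n))$ on $S(\mathbb C^{N|N} \otimes \mathbb C^n)$; this can be derived by combining Sergeev's duality on $V_N^{\otimes k}$ (equation \eqref{eq:sergeev}) with the classical Schur--Weyl duality on $(\mathbb C^n)^{\otimes k}$, carefully tracking the pairing between the $\mathcal H_k$-action on Sergeev irreducibles and the $S_k$-action that yields the super-symmetrization.
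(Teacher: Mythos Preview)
Your reduction to finite rank is fine, but the Howe duality you invoke is not the correct one. The pair $(\q(N),\mathfrak{gl}(n))$ does \emph{not} act multiplicity-free on $S(V_N\otimes\mathbb C^n)$: the commutant of $\q(N)$ on $V_N\otimes\mathbb C^n$ contains, besides $\mathrm{id}\otimes\mathfrak{gl}(n)$, the odd operators $P\otimes A$ for $A\in\mathfrak{gl}(n)$, and together these generate a copy of $\q(n)$. In particular the multiplicity spaces $W^\lambda$ are not irreducible over $\mathfrak{gl}(n)$; already for $\lambda=(1)$ one finds $W^{(1)}=\Hom_{\q(N)}(V_N,V_N\otimes\mathbb C^n)\simeq C_1\otimes\mathbb C^n\simeq\mathbb C^{n|n}$, which is two copies of the natural $\mathfrak{gl}(n)$-module. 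Your factorization $\varphi_\lambda\otimes\mathrm{id}_{W^\lambda}$ therefore does not follow from $\g_n\times\mathfrak{gl}(n)$-equivariance. The argument is repaired by using the genuine $(\q(N),\q(n))$-Howe duality (Sergeev, Cheng--Wang): the second $\q(n)$-action also preserves $U\otimes\mathbb C^n$ (since $P$ stabilizes $U$), the $W^\lambda$ become simple $\q(n)$-modules independent of $N$, and then your factorization and the rest of the argument go through, with the usual $\widehat\otimes$ adjustment when $L(\lambda)$ is of Q-type.

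For comparison, the paper's proof is more direct and avoids both the finite-rank reduction and Howe duality. It writes $S(V^{\oplus n})=\bigoplus S^{r_1}(V)\otimes\cdots\otimes S^{r_n}(V)$ and uses the Pieri rule for Schur $Q$-functions to see that every simple $\g$-constituent $V(\lambda)$ satisfies $l(\lambda)\le n$; hence every highest weight has support in $\varepsilon_1,\dots,\varepsilon_n$, so every highest weight vector already lies in $S(U^{\oplus n})$. Your route recovers the same bound $l(\lambda)\le n$ as a byproduct of the Howe decomposition, but then works harder than necessary: once that bound is known, the one-line weight argument suffices.
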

\begin{proof}
We use the isomorphism of $\g$-modules $$S(M)\simeq \bigoplus_{r_1,\dots,r_n\in{\mathbb Z}_{\geq 0}} S^{r_1}(V)\otimes\dots\otimes S^{r_n}(V).$$
Furthermore, if $V(\lambda)$ occurs in $S^{r_1}(V)\otimes\dots\otimes S^{r_n}(V)$, then $\lambda$ has at most $n$ rows. To show this we use the fact that all $V(\mu)$ that appear as direct summands of $V(\eta) \otimes S^r(V)$ have the property that $\mu - \eta$ is contained in a horizontal $r$-strip. For the latter we use the Pieri formula for Schur $P$-functions (see for example (5.7) in \S III.5 of \cite{Macd}) and the fact that the character of  $V(\lambda)$ is a multiple of the corresponding Schur $P$-function (and also of the $Q$-function).

Therefore the highest weight vectors
belong to $S^{r_1}(U)\otimes\dots\otimes S^{r_n}(U)$, where $U$ is the span of $e_i$ and $\bar e_i$ for $i=1,\dots, n$.
\end{proof}

\begin{remark}\label{rm:changeborel} Let $G$ be the group of all linear operators on $V\oplus W$ that preserve the pairing $(\cdot, \cdot)$, and that commute  with $P$. 
Then $G$ is a subgroup of the group of 
automorphisms of $\g$. Like in the case of $\mathfrak{gl} (\infty)$ (see Theorem 3.4 in \cite{DPS}) , the large annihilator condition implies that for any $\gamma\in G$, the twisted module $M^\gamma$ is 
isomorphic to $M$. Let ${\bf W}$ denote the normalizer of $\h$ in $G$. Then for any $s\in {\bf W}$, if $M$ is a highest weight module with respect to $s(\bb)$ it is a 
highest weight module with respect to $\bb$. 
\end{remark}

\begin{lemma}\label{lm:simplemodule} Every simple module in $\operatorname{Trep}\g$ is isomorphic to $V(\lambda,\mu)$ or $\Pi V(\lambda,\mu)$.
\end{lemma}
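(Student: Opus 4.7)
The plan is to exhibit a highest weight vector in $M$ for some Borel of $\g$; then Lemma~\ref{lem:parity} identifies $M$ with $V(\lambda,\mu)$ or $\Pi V(\lambda,\mu)$, where the strict bipartition $(\lambda,\mu)$ is read off from the highest weight in the form~\eqref{eqn:form}.

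First I would reduce to a finite-rank problem. By Lemma~\ref{lm:weights}, $M$ is $\h_0$-semisimple with weights in $\Lambda$, and the large annihilator condition gives $M=\bigcup_n M^{\cc_n}$; I choose $n$ so that $M^{\cc_n}\neq 0$. Every weight of $M^{\cc_n}$ must be supported on indices in the $\g_n$-range, since $h_i\in\cc_n$ for $i$ outside this range annihilates $M^{\cc_n}$. Because $\g_n$ commutes with $\cc_n$, $M^{\cc_n}$ is an integrable $\g_n$-submodule of $M$, and any cyclic $\g_n$-submodule of it is a finitely generated rational representation of the algebraic supergroup $Q(n)$, hence finite-dimensional. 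So $M^{\cc_n}$ contains a $\g_n$-highest weight vector $v_0$ for some Borel of $\g_n$; by Remark~\ref{rm:changeborel} and the ${\bf W}$-action, I may assume this Borel is $\bb\cap\g_n$ for the standard Borel $\bb$ of $\g$.

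The hard part is upgrading $v_0$ to a highest weight vector for the full Borel $\bb$. The positive root vectors of $\bb$ lying in $\g_n$ kill $v_0$ by choice, and those in $\cc_n$ kill it since $v_0\in M^{\cc_n}$; what remains are the \emph{mixed} positive root vectors $X_\alpha$ with $\alpha=\varepsilon_i-\varepsilon_j$, $i$ in the $\g_n$-range and $j$ in the $\cc_n$-range. My strategy here is iteration: if $X_\alpha v_0\neq 0$, I enlarge $n$ to some $n'>n$ with $j$ in the $\g_{n'}$-range; then $X_\alpha\in\g_{n'}$ commutes with $\cc_{n'}\subset\cc_n$ and so $X_\alpha v_0\in M^{\cc_{n'}}$. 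Applying the first step inside $U(\g_{n'})v_0$ produces a new $\g_{n'}$-highest weight vector for $\bb\cap\g_{n'}$ of weight $\kappa'\succ\kappa$, whose support includes $j$. Iterating yields an ascending chain of weights of form~\eqref{eqn:form}. The key technical difficulty, and the main obstacle of the proof, is to show that this chain must be finite: I would argue termination by combining the finite length of $M$ with the length bound on $X_{n_k}(\kappa_k)$ from Proposition~\ref{prop:cohomology}(i), in the spirit of the stabilization used in the proof of Lemma~\ref{lm:finitelength}, to conclude that only finitely many $\g_{n_k}$-highest weight data can arise as subrepresentations of the simple, finite-length module $M$.

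Once the iteration terminates, the stable vector is a $\bb$-highest weight vector whose weight is of form~\eqref{eqn:form} and determines a strict bipartition $(\lambda,\mu)$; Lemma~\ref{lem:parity} then identifies $M$ with $V(\lambda,\mu)$ or $\Pi V(\lambda,\mu)$, as claimed.
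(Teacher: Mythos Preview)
Your overall strategy---find a highest weight vector, then invoke Lemma~\ref{lem:parity}---is the same as the paper's, but your mechanism for producing the highest weight vector is different and has a genuine gap at exactly the point you flag as ``the main obstacle.''

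The termination argument you sketch does not go through. You propose to combine the finite length of $M$ with Proposition~\ref{prop:cohomology}(i), ``in the spirit of'' Lemma~\ref{lm:finitelength}. But $M$ is simple, so its length is $1$; that gives no bound on the number of distinct $\g_{n_k}$-highest weights that can occur in the $\g_{n_k}$-submodules $U(\g_{n_k})v_{k-1}$. Proposition~\ref{prop:cohomology}(i) bounds the length of $X_n(\kappa)$ in terms of $|\kappa|$, and in Lemma~\ref{lm:finitelength} this bound stabilizes because $\kappa$ is \emph{fixed} while $n$ grows. In your iteration the weight $\kappa_k$ changes at every step---its support strictly grows each time a mixed root vector fails to annihilate---so there is no fixed exponent to stabilize to, and nothing you have written prevents $|\kappa_k|$ from increasing without bound. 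You would need an a priori bound on the weight support of a simple object in $\operatorname{Trep}\g$, and at this point in the paper no such bound is available (it follows later, once one knows every simple embeds in some $T^{p,q}$, but that is circular here).

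The paper avoids the iteration entirely. It fixes a single $n$ with $v\in M^{\cc_n}$, introduces the parabolic $\mathfrak p=\mathfrak l\oplus\mathfrak m$ with Levi $\mathfrak l=\g_n\oplus\cc_n$ and abelian nilradical $\mathfrak m\simeq (V')^{\oplus n}$ as a $\cc_n$-module, and uses Lemma~\ref{lem:finitegen} to find a finite-dimensional $\mathfrak m'\subset\mathfrak m$ with $S(\mathfrak m)$ generated over $\cc_n$ by $S(\mathfrak m')$. Integrability makes $\mathfrak m'$ act locally nilpotently, so $S^N(\mathfrak m')v=0$ for some $N$, hence $S^N(\mathfrak m)v=0$ and $L^{\mathfrak m}\neq 0$. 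A $\bb\cap\mathfrak l$-singular vector in the irreducible $\mathfrak l$-module $L^{\mathfrak m}$ is then automatically singular for $\bb'=(\bb\cap\mathfrak l)\oplus\mathfrak m$, and Remark~\ref{rm:changeborel} transports this back to $\bb$. The key point you are missing is precisely Lemma~\ref{lem:finitegen}: it converts local nilpotence of a finite-dimensional piece into annihilation by the entire infinite-dimensional nilradical in one shot, so no iteration or termination argument is needed.
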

\begin{proof} Let $L$ be a simple module in $\operatorname{Trep}\g$. It suffices to prove the existence of a $\bb$-singular vector in $L$.

Let $v\in L$ be a non-zero weight vector. It is annihilated by some $\cc_n$. We consider the parabolic subalgebra $\mathfrak p$ of $\g$ with Levi
part $\mathfrak l=\g_n\oplus\cc_n$ and whose abelian nilradical $\mathfrak m$ is isomorpic to
$W_n\widehat\boxtimes V'$, where $V'$ is the standard $\cc_n$-module and $W_n$ is the costandard $\g_n$-module. In particular,
$\mathfrak m$ is isomorphic to $(V')^{\oplus n}$ as a $\cc_n$-module.
By Lemma \ref{lem:finitegen}, there exists a finite dimensional subspace $\mathfrak m'\subset \mathfrak m$ such that
$U(\mathfrak m)=S(\mathfrak m)$ is generated over $\cc_n$ by $S(\mathfrak m')$.
Since $L$ is integrable, the abelian subalgebra $\mathfrak m'$ acts locally nilpotently and therefore for some $N\geq 0$ we have
$S^N(\mathfrak m') v=0$. But then $S^N(\mathfrak m) v=0$.  
The latter implies that $L^{\mathfrak m}\neq 0$. 

Since $L$ is irreducible, $L^{\mathfrak m}$ is an irreducible $\mathfrak l$-module. On the other hand 
we note that $L^{\mathfrak m}$ is isomorphic to a $\cc_n$-submodule of $S^k(\mathfrak m)$ for some $k$. 
Hence $L^{\mathfrak m}$ contains a $\bb\cap\mathfrak l$-singular vector. 

Now we pick a $\bb\cap\mathfrak l$-singular vector $w$ in $L^{\mathfrak m}\neq 0$. Let $\bb'=(\bb\cap\mathfrak l) \oplus \mathfrak m$. 
Then $w$ is a $\bb'$-singular vector, and hence
$L$ is a highest weight module with respect to the borel subalgebra $\bb'$.

It is not difficult to see that $\bb'=s(\bb)$ for some $s\in {\bf W}$.
Thus the statement follows from Remark \ref{rm:changeborel}.
\end{proof}

\begin{theorem}\label{th:twocategories} The category $\operatorname{Trep}\g$ coincides with the category $\mathcal O'_{int}$.
\end{theorem}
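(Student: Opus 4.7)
The plan is to prove the two inclusions separately. The direction $\mathcal{O}'_{int}\subseteq \operatorname{Trep}\g$ is essentially a bookkeeping check: integrability and the large annihilator condition are built into the definitions of $\mathcal{O}'_{int}$ and $\mathcal{O}'$ respectively, while finite length is precisely the content of Lemma \ref{lm:finitelength}. So this direction requires no new work beyond citing the definitions and that lemma.

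For the reverse inclusion $\operatorname{Trep}\g\subseteq\mathcal{O}'_{int}$, I would take $M\in\operatorname{Trep}\g$. The definition of $\operatorname{Trep}\g$ already supplies integrability and the large annihilator condition, so the only thing to verify is that $M$ lies in the ambient category $\mathcal{O}$, i.e.\ that $M$ is finitely generated, $\h_0$-semisimple, and $\mathfrak{n}$-locally nilpotent. Finite generation is immediate from finite length, and $\h_0$-semisimplicity (with weights in $\Lambda$) is exactly Lemma \ref{lm:weights}. The one remaining condition, $\mathfrak{n}$-local nilpotency, I would reduce to a finite-rank statement: any element $x\in\mathfrak{n}$ is a finite linear combination of positive root vectors $E_{ij}$ with $i\prec j$, so $x$ lies in $\g_n$ for some $n$ and is strictly upper triangular there; given any $v\in M$, integrability of $M|_{\g_n}$ places $v$ in a finite-dimensional $\g_n$-invariant subspace on which $x$ acts as a genuinely nilpotent matrix, so $x^Nv=0$ for $N$ large.

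I do not anticipate a serious technical obstacle, since the heavy lifting has already been packaged into the preparatory lemmas, most importantly Lemma \ref{lm:finitelength} (which in turn rests on Lemma \ref{lm:simplemodule} and Proposition \ref{prop:cohomology}) and Lemma \ref{lm:weights}. The one conceptual point worth flagging is that in this infinite-rank setting ``$\mathfrak{n}$-locally nilpotent'' has to be read in the pointwise sense that each $x\in\mathfrak{n}$ acts nilpotently on each $v$, rather than in the stronger categorical sense that $U(\mathfrak{n})v$ is finite-dimensional; the latter is too strong since $\mathfrak{n}$ is itself infinite-dimensional, and it would already fail on simple highest weight modules like $V(\square,\square)$ where repeated application of positive root vectors such as $E_{k,-2}$ to a suitable weight vector produces an infinite-dimensional subspace.
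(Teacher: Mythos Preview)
Your proposal is correct, and for the inclusion $\mathcal{O}'_{int}\subseteq\operatorname{Trep}\g$ it matches the paper exactly. For the reverse inclusion $\operatorname{Trep}\g\subseteq\mathcal{O}'_{int}$, however, you take a different and somewhat more elementary route. The paper invokes Lemma~\ref{lm:simplemodule}, which classifies the simple objects of $\operatorname{Trep}\g$ as the highest weight modules $V(\lambda,\mu)$ (up to parity shift); since these lie in $\mathcal{O}'_{int}$ and the defining conditions of $\mathcal{O}$ are preserved by finite extensions, any finite-length module with such composition factors is again in $\mathcal{O}'_{int}$. Your argument bypasses Lemma~\ref{lm:simplemodule} entirely and verifies $\mathfrak{n}$-local nilpotency directly from integrability: local finiteness over each $\g_n$ together with a weight-shifting argument on the resulting finite-dimensional $\g_n$-stable subspace. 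This is a valid and arguably more self-contained derivation; the paper's approach, on the other hand, leverages a lemma that is needed anyway for the rest of the theory, so no work is saved either way.

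One small correction to your parenthetical: Lemma~\ref{lm:finitelength} does \emph{not} rest on Lemma~\ref{lm:simplemodule}; it appears earlier in the paper and its proof uses only Proposition~\ref{prop:cohomology}. Your remark about the two possible readings of ``$\mathfrak{n}$-locally nilpotent'' is well taken, and the pointwise interpretation is indeed the one in force here.
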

\begin{proof}
Lemma \ref{lm:finitelength} implies that $\mathcal O'_{int}$ is a subcategory of $\operatorname{Trep}\g$.
The inclusion $\operatorname{Trep}\g\subset \mathcal O'_{int}$ follows from Lemma \ref{lm:simplemodule}.
\end{proof}

\section{ Injective modules in $\operatorname{Trep}\g$}

\subsection{Decomposition of mixed tensor powers}
For any strict bipartition $(\lambda,\mu)$ we define the $\g$-module 
$$Z(\lambda,\mu):=\begin{cases}V(\lambda)\otimes W(\mu)\,\,\text{if}\,\,p(\lambda)p(\mu)=0\\
V(\lambda)\widehat{\otimes} W(\mu)\,\,\text{if}\,\,p(\lambda)p(\mu)=1\end{cases}.$$ 
It clear from Sergeev duality that $Z(\lambda,\mu)$ is a submodule of $T^{p,q}$ for $p=|\lambda|$, $q=|\mu|$. 

Let $S(\lambda,\mu)$ be the $\mathcal H_{|\lambda|, |\mu|}$-module defined by
$$S(\lambda,\mu):=\begin{cases}S(\lambda)\boxtimes S(\mu)\,\,\text{if}\,\,p(\lambda)p(\mu)=0\\
S(\lambda) \widehat{\boxtimes} S(\mu)\,\,\text{if}\,\,p(\lambda)p(\mu)=1\end{cases}.$$

Sergeev's duality (\ref{eq:sergeev}) implies the following decomposition
\begin{equation}\label{equ:decomp}
T^{p,q}=\bigoplus_{|\lambda|=p,|\mu|=q,p(\lambda,\mu)=0} Z(\lambda,\mu)\boxtimes S(\lambda,\mu)\oplus 
\bigoplus_{|\lambda|=p,|\mu|=q,p(\lambda,\mu)=1} Z(\lambda,\mu)\widehat{\boxtimes}S(\lambda,\mu).
\end{equation}

Moreover, we have the following identities involving the primitive idempotents.
\begin{equation} \label{equ:zidempotent}
e(\lambda)\otimes e(\mu) \left( T^{p,q}  \right) =\begin{cases}Z(\lambda,\mu),\,\text{if}\,\,p(\lambda)p(\mu)=0\\ 
Z(\lambda,\mu)\oplus \Pi Z(\lambda,\mu),\,\text{if}\,\,p(\lambda)p(\mu)=1\end{cases}.
\end{equation}

\subsection{General properties of the functor $\Gamma_n$}
We now prove a lemma that is somewhat surprising.
\begin{lemma}\label{lm:invariants} Let $M$ be a $\g$-module satisfying the large annihilator condition. Then $\Hom_{\g}(\mathbb C,M)=\Hom_{\g_0}(\mathbb C,M)$. 
\end{lemma}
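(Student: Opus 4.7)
The inclusion $\Hom_{\g}(\mathbb C,M)\subseteq\Hom_{\g_0}(\mathbb C,M)$ is immediate from restricting morphisms, so the content is the reverse inclusion. The plan is to fix a homogeneous $v\in M^{\g_0}$ (using that $\g_0$ preserves parity) and show that $Xv=0$ for every $X\in\g_1$ as well.

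To organize this, I would study the map $\phi\colon\g_1\to M$, $X\mapsto Xv$. For $Y\in\g_0$ the identity
\[
\phi([Y,X])=[Y,X]v=YXv-X(Yv)=Y\phi(X),
\]
where the last equality uses $Yv=0$, shows that $\phi$ is $\g_0$-equivariant with respect to the adjoint action on $\g_1$. Hence $K:=\ker\phi$ is a $\g_0$-submodule of $\g_1$, and the problem reduces to showing $K=\g_1$.

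The large annihilator condition now supplies an $n$ with $\cc_n\cdot v=0$. Writing $T_{ij}:=e_i\otimes\bar f_j+\bar e_i\otimes f_j$ for the natural basis of $\g_1$, one reads off that $T_{jj}\in\cc_n$ (and hence in $K$) as soon as $|j|$ is large enough that the indices $j$ fall outside the range defining $\g_n$. The real content, and the main obstacle, is the \emph{generation claim}: the $\g_0$-submodule of $\g_1$ generated by a single such $T_{jj}$ is all of $\g_1$. I would verify this by a direct bracket computation using the basis $Y_{kl}:=e_k\otimes f_l+\bar e_k\otimes\bar f_l$ of $\g_0$, which yields
\[
[Y_{kl},T_{ij}]=\delta_{li}T_{kj}-\delta_{jk}T_{il}.
\]
Starting from $T_{jj}$, bracketing with $Y_{kj}$ and with $Y_{jl}$ first produces every $T_{kj}$ and every $T_{jl}$, after which the identity $[Y_{jl},T_{kj}]=\delta_{lk}T_{jj}-T_{kl}$ solves for an arbitrary $T_{kl}$, proving the claim.

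Conceptually, this reflects that $\g_1$ is the adjoint module over $\g_0\cong\mathfrak{gl}(\infty)$, whose only proper nonzero submodule is the trace-zero part, and $T_{jj}$ has nonzero ``trace''; this is where the specific structure of $\q(\infty)$ enters, and it is the only nontrivial step in the argument. Once $K=\g_1$ is in hand, $v$ is annihilated by both $\g_0$ and $\g_1$ and therefore by all of $\g$, completing the proof.
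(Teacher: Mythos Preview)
Your proof is correct and follows essentially the same route as the paper: take $v\in M^{\g_0}$, invoke the large annihilator condition to get $(\cc_n)_1 v=0$, and then use that $(\cc_n)_1$ together with $\g_0$ generates $\g$. The paper simply asserts this last generation fact in one line, whereas you unpack it as the statement that $\g_1$ is generated as a $\g_0$-module by a single $T_{jj}$ and verify it by an explicit bracket computation; your added structural remark about the adjoint $\mathfrak{gl}(\infty)$-module is a nice way to see why the computation must succeed.
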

\begin{proof} We have the obvious inclusion $\Hom_{\g}(\mathbb C,M)\subset\Hom_{\g_0}(\mathbb C,M)$. To prove that equality we show that $M^{\g_0} \subset M^{\g}$.
Let $v\in M^{\g_0}$. By the large annihilator condition $(\cc_n)_1v=0$ for some $n$. But $(\cc_n)_1$ and $\g_0$ generate $\g$. Hence $\g v=0$.
\end{proof}

\begin{corollary}\label{cor:aboutgamma} For any $n>0$ and any module $M$ in  $\operatorname{Trep}\g$ we have
$\Gamma_n(M)=M^{(\cc_n)_0}$.
\end{corollary}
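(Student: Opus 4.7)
The inclusion $\Gamma_n(M) = M^{\cc_n} \subseteq M^{(\cc_n)_0}$ is immediate, so the content is in the reverse inclusion. My plan is to obtain it by applying Lemma \ref{lm:invariants} not to $\g$ but to $\cc_n$ itself, exploiting the fact noted earlier in the paper that $\cc_n \cong \q(\infty)$.

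The first step is to check that $M$, viewed as a $\cc_n$-module, again satisfies the large annihilator condition with respect to a natural exhaustion of $\cc_n$. For $k > n$, the subalgebra $\cc_k \subset \g$ is contained in $\cc_n$, and sits there as the centralizer of a copy of $\q(k-n) \subset \cc_n$, i.e.\ exactly in the role that the $\cc_m$'s play inside $\g$. Thus for any $v \in M$, the large annihilator condition in $\g$ provides some $k \geq n$ with $\cc_k \cdot v = 0$, and this $\cc_k$ is a ``large annihilator subalgebra'' of $\cc_n$.

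The second step is simply to invoke Lemma \ref{lm:invariants} for the $\q(\infty)$ which is $\cc_n$: since $M$ (restricted to $\cc_n$) satisfies the large annihilator condition, the lemma gives $M^{\cc_n} = M^{(\cc_n)_0}$. Concretely, if $v \in M^{(\cc_n)_0}$, pick $k \geq n$ with $(\cc_k)_1 v = 0$; the proof of Lemma \ref{lm:invariants} then uses that $(\cc_k)_1$ together with $(\cc_n)_0$ generates $\cc_n$ as a Lie superalgebra, forcing $\cc_n \cdot v = 0$.

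There is no real obstacle: the only thing to be mildly careful about is the verification in the first step that the exhaustion of $\cc_n$ by the $\cc_k$'s really does exhibit $\cc_n$ as a ``copy'' of $\q(\infty)$ to which Lemma \ref{lm:invariants} applies verbatim. Once that is observed, the corollary reduces to a direct reapplication of the lemma already established for $\g$.
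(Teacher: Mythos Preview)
Your proposal is correct and follows essentially the same approach as the paper: the paper's proof is the single sentence ``The statement follows by restricting $M$ to $\cc_n$ and using Lemma \ref{lm:invariants},'' and you have simply unpacked what that restriction entails, namely verifying that the large annihilator condition for $M$ over $\g$ passes to the required condition over $\cc_n\cong\q(\infty)$ via the chain $\cc_k\subset\cc_n$ for $k>n$.
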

\begin{proof} The statement follows by restricting $M$ to $\cc_n$ and using Lemma \ref{lm:invariants}.
\end{proof}

Consider the restriction functor $\operatorname{Trep}\g\to \operatorname{Trep}\g_0$. If we define $\operatorname{Trep}^k\g$ as the subcategory of modules
whose simple submodules are of the form $V(\lambda,\mu)$ with $|\lambda|+|\mu|\leq k$, then the restriction functor maps $\operatorname{Trep}^k\g$ to
$\operatorname{Trep}^k\g_0$.

In a similar way we define the subcategory $(\g_n-\md)^k$ of $\g_n-\md$. It is clear that $\Gamma_n$ maps $\operatorname{Trep}^k\g$ to $(\g_n-\md)^k$.

\begin{lemma}\label{lm:exact} If $n\gg k$, then the functor $\Gamma_n:\operatorname{Trep}^k\g\to (\g_n-\md)^k$ is exact.
\end{lemma}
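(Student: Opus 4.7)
My plan is as follows. The functor $\Gamma_n$ is automatically left exact since it is the functor of $\cc_n$-invariants, so I must establish right exactness on $\operatorname{Trep}^k\g$ for $n\gg k$: given a short exact sequence $0\to A\to B\to C\to 0$ in $\operatorname{Trep}^k\g$, I need $\Gamma_n(B)\to\Gamma_n(C)$ to be surjective. By a snake-lemma induction on the length of $C$ (pulling back via a simple submodule $C_1\subseteq C$ to short exact sequences $0\to A\to B_1\to C_1\to 0$ and $0\to B_1\to B\to C/C_1\to 0$, then combining left exactness of $\Gamma_n$ with the inductive hypothesis applied to the shorter quotient), it suffices to treat the case where $C$ is simple. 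By Lemma~\ref{lm:simplemodule} such $C$ has the form $V(\lambda,\mu)$ or $\Pi V(\lambda,\mu)$ with $|\lambda|+|\mu|\leq k$.

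My next step is to identify $\Gamma_n(V(\lambda,\mu))$ explicitly for $n\gg k$; I claim that it equals the simple finite-dimensional $\g_n$-module $V_n(\lambda,\mu)$. Indeed, the highest weight vector $v_{\lambda,\mu}$ of $V(\lambda,\mu)$ is $\cc_n$-invariant, since its weight is supported on coordinates moved by $\g_n$, and its $\g_n$-orbit spans a copy of $V_n(\lambda,\mu)$ inside $\Gamma_n(V(\lambda,\mu))$. The reverse inclusion uses that in the decomposition of $V(\lambda,\mu)|_{\g_n\oplus\cc_n}$ obtained via two applications of Sergeev duality, the only simple summand whose $\cc_n$-factor admits a nonzero $\cc_n$-invariant is the one with trivial $\cc_n$-factor, namely $V_n(\lambda,\mu)\boxtimes\mathbb C$. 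Consequently it is enough to lift highest weight vectors of $V_n(\lambda,\mu)$, with the remainder of $\Gamma_n(V(\lambda,\mu))$ being recovered $\g_n$-equivariantly.

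Given a highest weight vector $v\in \Gamma_n(V(\lambda,\mu))$ and any lift $\tilde v\in B$, the map $x\mapsto x\tilde v$ defines a $1$-cocycle $\phi\colon\cc_n\to A$ which, by the large annihilator condition applied to $\tilde v$, vanishes on $\cc_{n'}$ for some $n'\geq n$. To produce a $\cc_n$-invariant lift, I would show that $\phi$ is a coboundary, i.e., find $a\in A$ with $\phi(x)=xa$ for all $x\in\cc_n$; then $\tilde v-a$ is the desired $\cc_n$-invariant lift. The main obstacle is precisely this cohomological vanishing statement: for $A\in\operatorname{Trep}^k\g$ and $n\gg k$, every such $\phi$ must be a coboundary. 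This should follow from the structure of $A$ as a $\cc_n\simeq \q(\infty)$-module, whose composition factors again have size at most $k$; combined with the mixed tensor module decomposition \eqref{equ:decomp} and Lemma~\ref{lem:finitegen}, the inequality $n\gg k$ provides sufficient room inside $\cc_n$ to construct $a$ explicitly by reduction to a finite-dimensional reductive Lie subalgebra on which invariants are exact.
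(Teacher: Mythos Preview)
Your approach misses the paper's key simplification and leaves its central step incomplete.

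The paper's proof is two lines: by Corollary~\ref{cor:aboutgamma} one has $\Gamma_n(M)=M^{(\cc_n)_0}$ for every $M\in\operatorname{Trep}\g$, so $\Gamma_n$ factors as (exact) restriction to $\g_0\simeq\mathfrak{gl}(\infty)$ followed by the functor of $(\cc_n)_0$-invariants on $\operatorname{Trep}^k\g_0$. The question is thereby reduced entirely to the purely even setting of $\mathfrak{gl}(\infty)$, where the analogous exactness is the ``easy'' input the paper invokes. You never use Corollary~\ref{cor:aboutgamma} and are therefore forced to argue directly with cohomology of $\cc_n\simeq\q(\infty)$, which is the hard route.

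There are two specific gaps in your argument. First, the ``decomposition of $V(\lambda,\mu)|_{\g_n\oplus\cc_n}$ obtained via two applications of Sergeev duality'' does not exist in the form you need: Sergeev duality controls the polynomial pieces $V(\lambda)$ and $W(\mu)$ separately, but once you tensor them the resulting $(\g_n\oplus\cc_n)$-module is not semisimple --- the $\cc_n$-factors that arise are of the form $V'(\beta)\otimes W'(\delta)\simeq Z'(\beta,\delta)$, indecomposable but not simple --- so there is no direct-sum splitting into pieces $(\text{$\g_n$-simple})\boxtimes(\text{$\cc_n$-simple})$ from which to read off the trivial-$\cc_n$ summand. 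Second, and more seriously, the core of your argument is the claim that the cocycle $\phi:\cc_n\to A$ is a coboundary, which you yourself flag as ``the main obstacle'' and then do not prove. The appeals to Lemma~\ref{lem:finitegen} and \eqref{equ:decomp} concern polynomial $\cc_n$-modules, whereas $A|_{\cc_n}$ is a mixed tensor module; and ``reduction to a finite-dimensional reductive Lie subalgebra on which invariants are exact'' is essentially a restatement of what must be shown. The paper's point is precisely that Corollary~\ref{cor:aboutgamma} hands you this reduction for free by replacing $\cc_n$ with its even part $(\cc_n)_0$.
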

\begin{proof} Consider the restriction to $\g_0$. It is easy to see that the statement is true for $\operatorname{Trep}^k\g_0$ by semisimplicity of the latter category.
Now the lemma follows from Corollary 
\ref{cor:aboutgamma}.
\end{proof}
\subsection{Injectivity of trivial modules}
\begin{proposition}\label{prop:injectivetrivial} The trivial modules $\mathbb C$ and $\Pi\mathbb C$ are injective in $\operatorname{Trep}\g$.
\end{proposition}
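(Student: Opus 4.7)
The plan is to show $\ext^1_{\operatorname{Trep}\g}(L, \mathbb{C}) = 0$ for every simple $L \in \operatorname{Trep}\g$; by Lemma $\ref{lm:simplemodule}$, every such $L$ is of the form $V(\lambda, \mu)$ or $\Pi V(\lambda, \mu)$, and the injectivity of $\Pi\mathbb{C}$ follows from that of $\mathbb{C}$ via the parity switch functor.

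I would first handle the cases $L \cong \mathbb{C}$ and $L \cong \Pi\mathbb{C}$ by a direct cocycle calculation. An extension $0 \to \mathbb{C} \to M \to L \to 0$ is classified by a $1$-cocycle $\phi : \g \to \mathbb{C}$ of parity $p(L)$ factoring through the abelianization $\g/[\g, \g]$. A short bracket computation --- using that for $X = \bigl(\begin{smallmatrix} 0 & E_{ii} \\ E_{ii} & 0 \end{smallmatrix}\bigr)$ one has $[X, X] = 2E_{ii}$ of non-zero trace --- shows that $\g/[\g, \g]$ is one-dimensional and purely odd, spanned by the finitary odd trace $\operatorname{otr}(X) = \operatorname{tr}(B)$ for $X = \bigl(\begin{smallmatrix} A & B \\ B & A \end{smallmatrix}\bigr)$. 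Hence even cocycles vanish, and odd cocycles are all proportional to $\operatorname{otr}$; but $\operatorname{otr}$ is non-zero on every $\cc_n \cong \q(\infty)$, so any such extension violates the large annihilator condition and is absent from $\operatorname{Trep}\g$.

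For $L = V(\lambda, \mu)$ with $(\lambda, \mu) \neq (\emptyset, \emptyset)$, I would reduce to finite rank. Given $0 \to \mathbb{C} \to M \to L \to 0$ in $\operatorname{Trep}\g$, pick $n$ large enough that Lemma $\ref{lm:exact}$ applies; then
$$0 \to \mathbb{C} \to \Gamma_n(M) \to \Gamma_n(L) \to 0$$
is exact in finite-dimensional $\q(n)$-modules. A non-trivial even extension at this level would require $\ext^1_{\q(n)}(L_n(\kappa), L_n(0)) \neq 0$ for some composition factor $L_n(\kappa)$ of $\Gamma_n(L)$, hence $[X_n(\kappa) : L_n(0)] \neq 0$ by the maximality of $X_n(\kappa)$ among integrable quotients of $M_n(\kappa)$. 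Proposition $\ref{prop:cohomology}$(ii), applied for $n \gg |\lambda| + |\mu|$, then forces $\kappa \in \{0, \delta_1 - \delta_n\}$; the case $\kappa = 0$ is excluded immediately by the parity argument of the preceding paragraph, since it gives $\ext^1_{\q(n)}(L_n(0), L_n(0)) = 0$.

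The main obstacle is the remaining case $\kappa = \delta_1 - \delta_n$, which forces $L$ (up to parity) to be isomorphic to $V(\square, \square)$. At the $\q(n)$-level the non-split extension is realized by the adjoint representation, $0 \to \mathbb{C} \cdot \operatorname{Id} \to \q(n) \to \mathfrak{psq}(n) \to 0$ with $\mathfrak{psq}(n) \cong L_n(\delta_1 - \delta_n)$. To show no analogous extension lies in $\operatorname{Trep}\g$, I would exploit that the identity operator on $V$ is not finitary and so is absent from $\g$; concretely, using the decomposition $V \otimes W \cong \g \oplus \Pi \g$ and the canonical pairing $W \otimes V \to \mathbb{C}$, any candidate extension is realized as a submodule of $T^{1,1}$ and split by the pairing. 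Once this case is excluded, all $\q(n)$-sequences split for $n$ large, and a compatible-in-$n$ assembly --- invoking Lemma $\ref{lm:invariants}$ to transfer $\g_0$-equivariance to $\g$-equivariance on the relevant Hom-spaces (which inherit the large annihilator condition from $\Gamma_n$) --- yields a $\g$-equivariant splitting of the original sequence.
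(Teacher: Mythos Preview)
Your treatment of the trivial cases $L\cong\mathbb C$ and $L\cong\Pi\mathbb C$ via the abelianization $\g/[\g,\g]\cong\Pi\mathbb C$ and the large annihilator condition is fine, and is close in spirit to the paper's argument. The rest of the proposal, however, has two genuine gaps.

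First, the inference ``$\kappa=\delta_1-\delta_n$ forces $L\cong V(\square,\square)$'' is not justified. What your argument actually yields is that \emph{some composition factor} $L_n(\kappa)$ of $\Gamma_n(L)$ has $\kappa\in\{0,\delta_1-\delta_n\}$; this does not pin down the highest weight of $L$ itself. Many $V(\lambda,\mu)$ with $(\lambda,\mu)\neq(\square,\square)$ could in principle have $L_n(\delta_1-\delta_n)$ as a $\g_n$-constituent of their $\cc_n$-invariants. (You also implicitly need that every such $\kappa$ satisfies the hypothesis of Proposition~\ref{prop:cohomology}(ii); this can be arranged for $n\gg|\lambda|+|\mu|$ by a weight estimate, but it is not said.) The paper sidesteps this entirely by observing that a non-split extension $0\to\mathbb C\to M\to V(\lambda,\mu)\to 0$ is itself a highest weight module of highest weight $(\lambda,\mu)$, hence a quotient of $X(\lambda,\mu)$; then Proposition~\ref{prop:cohomology}(ii) is applied directly to the single weight $(\lambda,\mu)$, forcing $(\lambda,\mu)=(\square,\square)$.

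Second, your handling of the case $(\square,\square)$ does not work as written. You claim that any candidate extension embeds in $T^{1,1}$ and is split by the contraction. But $\mathbb C$ is \emph{not} a submodule of $T^{1,1}=\g\oplus\Pi\g$: the identity operator is not finitary, so $(T^{1,1})^{\g}=0$. Hence a non-split $M$ with socle $\mathbb C$ cannot embed there, and in any case appealing to injectivity of $T^{1,1}$ would be circular, since Proposition~\ref{prop:injectivetensor} is proved using the present result. The paper's argument is the correct formalization of your ``identity is not finitary'' intuition: one identifies $X(\square,\square)\cong\mathfrak{sq}(\infty)$ as a direct limit of the $X_n(\square,\square)\cong\mathfrak{sq}(n)$, and since $\mathfrak{sq}(\infty)$ is simple, no length-two quotient exists. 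Finally, the ``compatible-in-$n$ assembly'' of $\g_n$-splittings into a $\g$-splitting is left vague; once you work with $X(\lambda,\mu)$ directly as above, this step becomes unnecessary.
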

\begin{proof} To prove the statement it is enough to  show that for any strict bipartition $(\lambda,\mu)$  any two exact sequences
$$0\to\mathbb C\to X\to V(\lambda,\mu)\to 0$$
and
$$0\to\Pi\mathbb C\to X\to V(\lambda,\mu)\to 0$$
split. 

First we assume that $V(\lambda,\mu)$ is isomorphic to $\mathbb C$. For the first sequence, we observe that
$\g_1$ acts trivially on $X$ and $\g_0=[\g_1,\g_1]$. Thus, $X$ is a trivial $\g$-module isomorphic to $\mathbb C\oplus\mathbb C$. 
For the second exact sequence, we have a decompositon $X=\mathbb C\oplus\Pi\mathbb C$ of $\g_0$-modules.
By Lemma \ref{lm:invariants} we obtain $\Hom_\g(\mathbb C,X)=\Hom_{\g_0}(\mathbb C,X)=\mathbb C^{1|1}$. Hence
$X$ is isomorphic to $\mathbb C\oplus\Pi\mathbb C$.

Now we assume that $V(\lambda,\mu)$ is not trivial, i.e. that $(\lambda,\mu)$ is a non-empty bipartition. 
Assume that one of the above sequences does not split.
We  use the notations $X(\lambda,\mu)$, $Y_n(\lambda,\mu)$
and $X_n(\lambda,\mu)$ introduced in the proof of Lemma \ref{lm:finitelength}. 
We know that $X$ is a quotient of $X(\lambda,\mu)$. In particular, we have $[X(\lambda,\mu):\mathbb C]\neq 0$.
Recall that $X(\lambda,\mu)=\displaystyle\lim_{\longrightarrow}X_n(\lambda,\mu)$. Hence 
we have $[X_n(\lambda,\mu):\mathbb C]\neq 0$ for  sufficiently large $n$. By Proposition \ref{prop:cohomology}(ii)
this is  possible only if $(\lambda,\mu)=(\square,\square)$. It remains to prove that the sequence splits in this particular case.

We have 
$$X_n(\square,\square)\simeq [\g_n,\g_n]:=\mathfrak{sq}(n).$$
After applying direct limits we obtain
$$X(\square,\square)\simeq [\g,\g]:=\mathfrak{sq}(\infty).$$
But $\mathfrak{sq}(\infty)$ is a simple superalgebra and hence an irreducible $\g$-module, which leads to a contradiction.
\end{proof}

\subsection{Injectivity of $T^{p,q}$}

Sergeev's duality implies that
$Z(\lambda,\mu)$ contains a highest weight vector of weight $(\lambda,\mu)$.
Therefore we know that $V(\lambda,\mu)$ is a subquotient
of $Z(\lambda,\mu)$ and hence of $T^{p,q}$. 

Let $\dot{V}(\lambda,\mu)$ denote the maximal integrable highest weight $\g_0$-module with highest weight $(\lambda,\mu)$. This module is simple (see \cite{DPS}).

\begin{lemma}\label{lm:submodule}  The highest weight $\g$-module $V(\lambda,\mu)$ contains a $\g_0$-submodule isomorphic to $\dot{V}(\lambda,\mu)$. 
\end{lemma}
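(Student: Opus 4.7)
The plan is to take a $\bb$-singular vector in $V(\lambda,\mu)$ and show that the $\g_0$-submodule it generates is already isomorphic to $\dot{V}(\lambda,\mu)$. Let $v\in V(\lambda,\mu)$ be a highest weight vector for $\bb$. Since the $\bb$-stable nilpotent radical $\mathfrak n$ contains $\mathfrak n\cap\g_0$ (the nilpotent radical of the induced Borel $\bb\cap\g_0$ of $\g_0\simeq\mathfrak{gl}(\infty)$), the vector $v$ is automatically a $(\bb\cap\g_0)$-singular vector of weight $(\lambda,\mu)$ as a $\g_0$-module. Set $U:=U(\g_0)v\subset V(\lambda,\mu)$; this is a nonzero highest weight $\g_0$-module with highest weight $(\lambda,\mu)$.

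Next I would observe that $U$ is integrable as a $\g_0$-module. Indeed, $V(\lambda,\mu)$ is integrable as a $\g$-module by definition, meaning that for every $n$ the action of $\g_n$ exponentiates to an action of $Q(n)$. Restricting to $(\g_n)_0\simeq\mathfrak{gl}(n)\oplus\mathfrak{gl}(n)$ (or more precisely to the appropriate even algebraic subgroup), we conclude that $V(\lambda,\mu)$, and hence $U$, is integrable as a $\g_0$-module.

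Therefore $U$ is an integrable highest weight $\g_0$-module with highest weight $(\lambda,\mu)$, and by the universal property of $\dot V(\lambda,\mu)$ as the maximal such module, there is a surjection $\dot V(\lambda,\mu)\twoheadrightarrow U$. Because $\dot V(\lambda,\mu)$ is simple by \cite{DPS} and $U$ is nonzero, this surjection must be an isomorphism. This produces the desired $\g_0$-submodule of $V(\lambda,\mu)$.

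The argument is essentially a universal property calculation, so there is no real obstacle; the only point requiring care is checking that $(\lambda,\mu)$ is a genuine integral dominant weight for the Borel $\bb\cap\g_0$ of $\mathfrak{gl}(\infty)$ in the exotic order, which is immediate because the sequence $a_1>\dots>a_k>0=\dots=0>-a_{-l}>\dots>-a_{-1}$ is weakly decreasing precisely along the order $1\prec 2\prec\dots\prec-2\prec-1$.
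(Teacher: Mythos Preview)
Your proof is correct and follows essentially the same approach as the paper: take a highest weight vector $v$, form $U(\g_0)v$, and identify it with $\dot V(\lambda,\mu)$ using simplicity of the latter. The paper's proof is a one-line version of yours, asserting directly that $U(\g_0)v$ is simple of highest weight $(\lambda,\mu)$; you have just spelled out the justification via integrability and the universal property of $\dot V(\lambda,\mu)$. One minor slip: $(\g_n)_0=\q(n)_0\simeq\mathfrak{gl}(n)$, not $\mathfrak{gl}(n)\oplus\mathfrak{gl}(n)$, but your hedge ``the appropriate even algebraic subgroup'' already covers this and the integrability argument is unaffected.
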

\begin{proof} Pick up a highest weight vector $v\in V(\lambda,\mu)$ and consider the submodule $U(\g_0)v$. This is simple $\g_0$-module with highest weight
$(\lambda,\mu)$.
\end{proof}

\begin{lemma}\label{lm:hom} 
\begin{itemize}
\item[(i)] If $\Hom_{\g}(V(\lambda,\mu),T^{p,q})\neq 0$, then $|\lambda|=p$ and $|\mu|=q$.
\item[(ii)] $\Hom_{\g}(T^{p,q}, T^{r,s})\neq 0$ implies $p-r = q-s \geq 0$.
\end{itemize} 
\end{lemma}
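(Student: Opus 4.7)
The strategy for both parts is to restrict to the even subalgebra $\g_0 \cong \mathfrak{gl}(\infty)$ and invoke the analogous structural results from the theory of the category $\operatorname{Trep}\mathfrak{gl}(\infty)$ developed in \cite{DPS}. As a $\g_0$-module, $V \cong V_0 \oplus \Pi V_0$ and $W \cong W_0 \oplus \Pi W_0$, where $V_0$ and $W_0$ denote the natural and conatural $\mathfrak{gl}(\infty)$-modules. Consequently, $T^{p,q}|_{\g_0}$ is a direct sum of copies (with various parity shifts) of the mixed tensor power $V_0^{\otimes p} \otimes W_0^{\otimes q}$.

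For part (i), any nonzero morphism $V(\lambda,\mu) \to T^{p,q}$ is injective because the source is simple. By Lemma \ref{lm:submodule}, $V(\lambda,\mu)$ contains the simple $\g_0$-module $\dot V(\lambda,\mu)$ as a submodule, so the embedding restricts to a $\mathfrak{gl}(\infty)$-embedding $\dot V(\lambda,\mu) \hookrightarrow V_0^{\otimes p} \otimes W_0^{\otimes q}$. The socle of this mixed tensor power in $\operatorname{Trep}\mathfrak{gl}(\infty)$ is known from \cite{DPS} to consist precisely of the isotypic components of those $\dot V(\lambda',\mu')$ with $|\lambda'|=p$ and $|\mu'|=q$, forcing $|\lambda|=p$ and $|\mu|=q$.

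For part (ii), suppose $f \colon T^{p,q} \to T^{r,s}$ is a nonzero morphism. Its image $\Im f \subseteq T^{r,s}$ has finite length, and hence contains a simple submodule $V(\lambda,\mu)$. By part (i), applied to $T^{r,s}$, this simple submodule satisfies $|\lambda|=r$ and $|\mu|=s$. In particular, $V(\lambda,\mu)$ is a subquotient of $T^{p,q}$, so Lemma \ref{lm:submodule} together with restriction to $\g_0$ exhibits $\dot V(\lambda,\mu)$ as a $\mathfrak{gl}(\infty)$-subquotient of $V_0^{\otimes p} \otimes W_0^{\otimes q}$. The simple subquotients of this mixed tensor power are \cite{DPS} exactly the $\dot V(\lambda'',\mu'')$ with $|\lambda''|\leq p$, $|\mu''|\leq q$, and $p-|\lambda''|=q-|\mu''|$. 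Applying this with $(\lambda'',\mu'')=(\lambda,\mu)$ yields $p-r=q-s\geq 0$.

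The main obstacle is marshalling the precise statements from \cite{DPS} and verifying that they transfer cleanly under restriction to $\g_0$ (namely, the description of the socle and of the full collection of simple subquotients of $V_0^{\otimes p}\otimes W_0^{\otimes q}$). A valid alternative for (ii) is the direct inclusion $\Hom_\g(T^{p,q},T^{r,s}) \subseteq \Hom_{\g_0}(T^{p,q},T^{r,s})$, combined with the description in \cite{DPS} of $\mathfrak{gl}(\infty)$-intertwiners between mixed tensor powers as being generated by iterated contractions, which exist precisely when $p-r=q-s\geq 0$.
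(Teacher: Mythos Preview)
Your proposal is correct and follows essentially the same route as the paper: restrict to $\g_0\cong\mathfrak{gl}(\infty)$, decompose $T^{p,q}$ as copies of $\dot T^{p,q}=V_0^{\otimes p}\otimes W_0^{\otimes q}$, invoke Lemma~\ref{lm:submodule}, and cite \cite{DPS}. For part~(ii) the paper simply says ``similar reasoning,'' which amounts to your stated alternative (the inclusion $\Hom_\g\subseteq\Hom_{\g_0}$ together with the known $\mathfrak{gl}(\infty)$ result); your primary argument via a simple constituent of $\operatorname{Im}f$ and the subquotient classification of $\dot T^{p,q}$ is a correct but slightly more elaborate variant.
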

\begin{proof} (i) Let $\dot{T}^{p,q}=V_0^{\otimes p}\otimes W_0^{\otimes q}$. By Proposition 5.4 in \cite{DPS}, if $\Hom_{\g_0}(\dot{V}(\lambda,\mu),\dot{T}^{p,q})\neq 0$, then $|\lambda|=p$ and $|\mu|=q$. Since $T^{p,q}$ is a direct sum of several copies of $\dot{T}^{p,q}$ the 
statement follows from Lemma \ref{lm:submodule}. Part (ii) follows by similar reasoning.
\end{proof}


\begin{lemma}\label{lm:adjoint} Let $M,N,L$ be modules in $\operatorname{Trep}\g$. Then:
$$\Hom(M\otimes L,N)\simeq \Hom(M,\Gamma(\Hom_{\mathbb C}(L,N))).$$
\end{lemma}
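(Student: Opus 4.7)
The plan is to reduce the claim to the standard tensor-hom adjunction combined with Lemma \ref{lm:gammaadjoint}.

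First I would recall that for any Lie superalgebra $\g$ and any $\g$-modules $M,L,N$, the vector space $\Hom_{\mathbb C}(L,N)$ carries a natural $\g$-module structure given by the super-commutator $(x\cdot\varphi)(l)=x\varphi(l)-(-1)^{p(x)p(\varphi)}\varphi(xl)$, and the classical tensor-hom adjunction produces a natural isomorphism
$$\Hom_\g(M\otimes L,N)\simeq \Hom_\g(M,\Hom_{\mathbb C}(L,N)).$$
This isomorphism is purely formal and does not use anything about the direct-limit structure of $\g$; it only uses the Hopf superalgebra structure on $U(\g)$.

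Next, since $M$ is an object of $\operatorname{Trep}\g$, it satisfies the large annihilator condition by definition. Thus Lemma \ref{lm:gammaadjoint} applies with $Y=\Hom_{\mathbb C}(L,N)$ and yields
$$\Hom_\g(M,\Hom_{\mathbb C}(L,N))\simeq \Hom_\g(M,\Gamma(\Hom_{\mathbb C}(L,N))).$$
Composing the two isomorphisms gives the desired identification
$$\Hom(M\otimes L,N)\simeq \Hom(M,\Gamma(\Hom_{\mathbb C}(L,N))).$$

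The only thing to be slightly careful about is bookkeeping of parities: the tensor-hom adjunction should be stated for the full supervector space $\Hom$ (both even and odd maps) so that it is a $\g$-equivariant isomorphism of supervector spaces, which is exactly the convention fixed in Section 2. No finiteness assumption on $L$ or $N$ is needed, since $\Gamma$ carves out the part of $\Hom_{\mathbb C}(L,N)$ that is accessible by $\g$-morphisms from a module satisfying the large annihilator condition. I do not expect any real obstacle here; the lemma is a formal consequence of the preceding Lemma \ref{lm:gammaadjoint}, and the main content of the statement is to package this adjunction in a form where the target module itself lies close to $\operatorname{Trep}\g$ (via the application of $\Gamma$), which will be convenient for the injectivity arguments that follow.
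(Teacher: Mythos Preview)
Your argument is correct and follows exactly the same route as the paper: first invoke the standard tensor--hom adjunction $\Hom(M\otimes L,N)\simeq \Hom(M,\Hom_{\mathbb C}(L,N))$ valid for all $\g$-modules, and then apply Lemma~\ref{lm:gammaadjoint} using that $M$ satisfies the large annihilator condition. The extra remarks on parities and on why no finiteness hypotheses are needed are fine but not essential.
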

\begin{proof} The following isomorphism holds for all $\g$-modules: 
$$\Hom(M\otimes L,N)\simeq \Hom(M,\Hom_{\mathbb C}(L,N)).$$
Now the statement follows directly  from Lemma \ref{lm:gammaadjoint}.
\end{proof}

\begin{lemma}\label{lm:auxilaryinjective} We have the following isomorphisms of $\g$-modules
$$\Gamma(\Hom_{\mathbb C}(V,T^{p,q}))=T^{p,q+1}\oplus (T^{p-1,q})^{\oplus p\theta },\quad \Gamma(\Hom_{\mathbb C}(W,T^{p,q}))=T^{p+1,q}\oplus (T^{p,q-1})^{\oplus q\theta }.$$
\end{lemma}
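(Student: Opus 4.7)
The plan is to construct explicit $\g$-equivariant maps realizing the proposed right-hand side inside $\Gamma(\Hom_{\mathbb C}(V,T^{p,q}))$, verify that they land in $\Gamma$ and are injective, and then prove surjectivity via Lemma \ref{lm:adjoint} combined with Sergeev duality. I focus on the first isomorphism; the second follows by interchanging the roles of $V$ and $W$ throughout.

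I first construct two families of $\g$-equivariant maps. Define $\iota_0\colon T^{p,q+1}\to\Hom_{\mathbb C}(V,T^{p,q})$ on an elementary tensor $t\otimes w$, where $w$ is viewed as the $(q+1)$-st $W$-factor, by $\iota_0(t\otimes w)(v)=(w,v)\cdot t$. For each $i\in\{1,\dots,p\}$ and each $c$ in the basis $\{1,P\}$ of $C_1\simeq\End_\g(V)$, define $\iota_{i,c}\colon T^{p-1,q}\to\Hom_{\mathbb C}(V,T^{p,q})$ by sending a tensor with its $i$-th $V$-slot suppressed to the homomorphism $v\mapsto$ insertion of $c(v)$ in the $i$-th slot, with the appropriate Koszul sign. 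The $\g$-invariance of the pairing and the commutation of $P$ with $\g$ make all of these maps $\g$-equivariant; the choice $c=P$ carries odd parity, so the two basis elements of $C_1$ account for the factor $\theta=1+\varepsilon$, giving total multiplicity $p\theta$. Assembling these maps yields
\[\Phi\colon T^{p,q+1}\oplus(T^{p-1,q})^{\oplus p\theta}\longrightarrow\Hom_{\mathbb C}(V,T^{p,q}).\]
Each image $\Phi(\xi)$ depends on $v\in V$ only through its coordinates in a finite-dimensional subspace of $V$, hence is annihilated by $\cc_n$ for $n$ sufficiently large, placing $\operatorname{Im}\Phi\subseteq\Gamma(\Hom_{\mathbb C}(V,T^{p,q}))$. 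Injectivity is immediate: evaluating $\Phi(\xi)$ on basis vectors $e_j,\bar e_j$ with $|j|$ beyond the support of $\xi$ kills the $\iota_0$-summand while preserving each $\iota_{i,c}$-summand, and the slot $i$ together with the parity $c$ is read off from the inserted position.

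The main obstacle is surjectivity. By Lemma \ref{lm:adjoint}, for any $M\in\operatorname{Trep}\g$,
\[\Hom_\g(M,\Gamma(\Hom_{\mathbb C}(V,T^{p,q})))\cong\Hom_\g(M\otimes V,T^{p,q}),\]
so by Yoneda it suffices to verify that for all $r,s\geq 0$,
\[\Hom_\g(T^{r+1,s},T^{p,q})\cong\Hom_\g(T^{r,s},T^{p,q+1})\oplus p\theta\cdot\Hom_\g(T^{r,s},T^{p-1,q}).\]
By Lemma \ref{lm:hom} all three terms vanish unless $r+1-p=s-q\geq 0$, so one may restrict to that case. Substituting the Sergeev decompositions (\ref{eq:sergeev})--(\ref{eq:sergeev1}) and (\ref{equ:decomp}) turns each $\Hom$-space into a sum indexed by strict bipartitions of $\Hom$-spaces between the $Z(\lambda,\mu)$-type objects tensored with Sergeev-algebra multiplicity spaces; Lemma \ref{lem:LRind} then expresses each multiplicity in terms of a shifted Littlewood-Richardson coefficient of type $Q$. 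The resulting combinatorial identity is a consequence of the Pieri rule (\ref{equ:tensorproduct}): tensoring by an extra copy of $V$ corresponds to adjoining one box, which either does not contract with an existing $W$-slot (contributing to $\Hom_\g(T^{r,s},T^{p,q+1})$ via the pairing-based transfer of a $V$-slot into a $W$-slot) or contracts with one of the $p$ target $V$-slots through a $C_1$-indexed pairing (contributing $p\theta$ copies of $\Hom_\g(T^{r,s},T^{p-1,q})$, with the $\theta$ factor reading off exactly the parity-dependent coefficient $\theta^{p(\nu)p(\mu)}\theta$ in (\ref{equ:tensorproduct})).

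The second isomorphism $\Gamma(\Hom_{\mathbb C}(W,T^{p,q}))=T^{p+1,q}\oplus(T^{p,q-1})^{\oplus q\theta}$ follows by the same argument with the roles of $V$ and $W$ interchanged, using (\ref{eq:sergeev1}) in place of (\ref{eq:sergeev}).
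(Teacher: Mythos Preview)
Your construction of the maps $\iota_0$ and $\iota_{i,c}$ is correct, and together they do give an injective $\g$-map $\Phi$ landing in $\Gamma(\Hom_{\mathbb C}(V,T^{p,q}))$. The gap is in the surjectivity argument. Invoking Yoneda with test objects $M=T^{r,s}$ only works if either the $T^{r,s}$ are \emph{projective} generators, or the target $B:=\Gamma(\Hom_{\mathbb C}(V,T^{p,q}))$ is already known to lie in $\operatorname{Trep}\g$ (so that one may take $M=B$). Neither is available at this point: injectivity of $T^{p,q}$ is Proposition~\ref{prop:injectivetensor}, which \emph{uses} the present lemma, and finite length of $B$ is precisely what is in question. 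Moreover, the dimension identity you need,
\[
\dim\Hom_\g(T^{r+1,s},T^{p,q})=\dim\Hom_\g(T^{r,s},T^{p,q+1})+p\theta\cdot\dim\Hom_\g(T^{r,s},T^{p-1,q}),
\]
cannot be verified by ``substituting the Sergeev decompositions'' as you suggest: after decomposing one is left with $\Hom_\g(Z(\lambda,\mu),Z(\lambda',\mu'))$, and these are only computed in Theorem~\ref{thm:zhom}, which in turn relies on the formula for $c(p,q,r)$ in Lemma~\ref{lem:formula}---and that formula is proved \emph{from} Lemma~\ref{lm:auxilaryinjective}. So the argument is circular.

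The paper avoids all of this by computing $\Gamma_n$ directly for each $n$. Writing $V=V_n\oplus V'$ and $W=W_n\oplus W'$ (standard for $\g_n$ and $\cc_n$ respectively), one has $\Gamma_n\Hom_{\mathbb C}(V,T^{p,q})=\Hom_{\cc_n}(V,T^{p,q})$; expanding the tensor power and applying Lemma~\ref{lm:hom} to $\cc_n\simeq\q(\infty)$ forces exactly one $V'$-factor (in $p$ possible slots, each with $\End_{\cc_n}(V')\simeq C_1$ contributing~$\theta$) or none (yielding $\Hom_{\mathbb C}(V_n,V_n^{\otimes p}\otimes W_n^{\otimes q})\simeq V_n^{\otimes p}\otimes W_n^{\otimes(q+1)}$). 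Passing to the direct limit gives the statement. Your explicit $\iota_0,\iota_{i,c}$ are precisely the maps this computation produces, so the cleanest fix is to drop the Yoneda step and argue surjectivity by this finite-level decomposition instead.
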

\begin{proof}
We have $V = V_n \oplus V'$ and $W = W_n \oplus W'$ where $V_n$ (respectively, $W_n$) is the standard (respectively, costandard) $\g_n$-module and  
$V'$ (respectively, $W'$) is the standard (respectively, costandard) $\cc_n$-module. 
Recall that $\Hom_{\cc_n} (V', (V')^{\otimes r} \otimes (W')^{\otimes s} ) \neq 0$ only if $r=1, s=0$ by
Lemma \ref{lm:hom}. Hence we have
\begin{eqnarray*}
\Gamma_n \Hom_{\mathbb C}(V,T^{p,q}) & = & \Hom_{\cc_n} \left( V, V^{\otimes p} \otimes W^{\otimes q} \right)\\  &= & \Hom_{\cc_n} \left( V'\oplus V_n, (V' \oplus V_n)^{\otimes p} \otimes (W' \oplus W_n)^{\otimes q} \right) \\
& \simeq & \Hom_{\cc_n} \left( V', V' \otimes V_n^{\otimes (p-1)} \otimes W_n^{\otimes q} \right)^{\oplus p} \oplus  \Hom_{\cc_n} \left( V_n, V_n^{\otimes p} \otimes W_n^{\otimes q} \right)\\
& \simeq & (V_n^{\otimes (p-1)} \otimes W_n^{\otimes q} )^{\oplus p\theta} \oplus V_n^{\otimes p} \otimes W_n^{\otimes (q+1)}.
\end{eqnarray*}
Then the first identity follows by applying direct limits. We similarly establish the second identity.
\end{proof}

\begin{lemma} \label{lem:endisomorphism}
We have that $\End(T^{p,q}) \simeq {\mathcal H}_{p, q}$.
\end{lemma}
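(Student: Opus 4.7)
The plan is to construct the natural superalgebra map $\phi\colon \mathcal H_{p,q}\to \End(T^{p,q})$, verify that it is injective, and match superdimensions on both sides. The map $\phi$ exists because by Sergeev duality, $\mathcal H_p$ and $\mathcal H_q$ act on $V^{\otimes p}$ and $W^{\otimes q}$ respectively as the $\g$-centralizers, and these two actions commute with one another and with the diagonal action of $\g$ on $T^{p,q}=V^{\otimes p}\otimes W^{\otimes q}$. Injectivity is immediate from the decomposition \ref{equ:decomp}: every simple $\mathcal H_{p,q}$-module $S(\lambda,\mu)$ appears in $T^{p,q}$ with nonzero multiplicity (namely the nonzero module $Z(\lambda,\mu)$), so $\mathcal H_{p,q}$ acts faithfully on $T^{p,q}$, and since $\mathcal H_{p,q}$ is semisimple this makes $\phi$ an embedding of superalgebras.

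Next I would compute $\dim\End(T^{p,q})$ by induction on $p+q$. Writing $T^{p,q}\simeq V\otimes T^{p-1,q}$ and applying Lemma \ref{lm:adjoint} together with Lemma \ref{lm:auxilaryinjective} gives
$$\End(T^{p,q})\;\simeq\;\Hom(T^{p-1,q},T^{p,q+1})\,\oplus\,\Hom(T^{p-1,q},T^{p-1,q})^{\oplus p\theta}.$$
The first summand vanishes by Lemma \ref{lm:hom}(ii), since $(p-1)-p=-1<0$, and so $\dim\End(T^{p,q})=p\theta\cdot\dim\End(T^{p-1,q})$. An analogous identity $\dim\End(T^{p,q})=q\theta\cdot\dim\End(T^{p,q-1})$ follows by instead writing $T^{p,q}\simeq W\otimes T^{p,q-1}$ and applying the second formula in Lemma \ref{lm:auxilaryinjective}. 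Iterating down to the base case $\End(\mathbb C)=\mathbb C$ yields $\dim\End(T^{p,q})=p!\,q!\,\theta^{p+q}$. This agrees with $\dim\mathcal H_{p,q}$: indeed $\mathcal H_r$ is the semidirect product of $\mathbb C[S_r]$ with the Clifford algebra on $r$ odd generators, and so has superdimension $r!\,\theta^{r}$. An injective superalgebra map between finite-dimensional objects of equal superdimension is necessarily an isomorphism, which completes the proof.

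The main delicate point is keeping careful track of parities: the multiplicity $p\theta=p(1+\varepsilon)$ produced by Lemma \ref{lm:auxilaryinjective} mixes even and odd summands, and so the induction must be carried out at the level of superdimensions in $\mathbb N[\varepsilon]/(\varepsilon^2-1)$ rather than ordinary dimensions. Otherwise, the steps reduce to straightforward applications of results already established: Sergeev duality supplies $\phi$ and its faithfulness, the adjunction $\Hom(V\otimes -,-)\simeq \Hom(-,\Gamma(\Hom_{\mathbb C}(V,-)))$ recursively reduces $\End(T^{p,q})$ to $\End(T^{p-1,q})$, and Lemma \ref{lm:hom}(ii) disposes of the off-diagonal $\Hom$ term at each step.
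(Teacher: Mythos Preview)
Your proposal is correct and follows essentially the same approach as the paper: both establish the injective map $\mathcal H_{p,q}\hookrightarrow\End(T^{p,q})$, then compute $\dim\End(T^{p,q})$ recursively via the adjunction of Lemma~\ref{lm:adjoint} combined with Lemma~\ref{lm:auxilaryinjective}, obtaining $\dim\End(T^{p,q})=p\theta\cdot\dim\End(T^{p-1,q})$ and hence $p!\,q!\,\theta^{p+q}$ by induction. Your write-up is slightly more explicit than the paper's about the source of injectivity (via the decomposition~\eqref{equ:decomp}) and about the superdimension bookkeeping, but the argument is the same.
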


\begin{proof} We have an injective map  ${\mathcal H}_{p, q}  \hookrightarrow \End(T^{p,q})$. 
In order to prove that this is an isomorphism, we compute the dimensions of the two spaces. Using Lemma \ref{lm:auxilaryinjective} we have
\begin{eqnarray*}
 \Hom_{\g}(T^{p,q}, T^{p,q}) & \simeq &   \Hom_{\g}(T^{p-1,q}, \Hom_{\mathbb C}(V,T^{p,q}))\\  & = & \Hom_{\g}(T^{p-1,q}, \Gamma \Hom_{\mathbb C}(V,T^{p,q}))\\
&\simeq&  \Hom_{\g}(T^{p-1,q}, T^{p-1,q})^{\oplus p\theta} \oplus  \Hom_{\g}(T^{p-1,q}, T^{p,q+1})\\
&=&  \Hom_{\g}(T^{p-1,q}, T^{p-1,q})^{\oplus p\theta}.
\end{eqnarray*}
Now by induction on $p+q$ we prove that $\dim  \Hom_{\g}(T^{p,q}, T^{p,q}) = \theta^{p+q} p! q!$ which coincides with the dimension of ${\mathcal H}_{p, q}$.  
\end{proof}

\begin{proposition}\label{prop:injectivetensor} $T^{p,q}$ is injective in $\operatorname{Trep}\g$ for all $p$ and $q$.
\end{proposition}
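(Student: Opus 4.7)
My plan is to prove the claim by induction on $p+q$, with the base case $T^{0,0} \simeq \mathbb{C}$ handled by Proposition \ref{prop:injectivetrivial}. The strategy for the inductive step is to exhibit $T^{p,q}$ as a direct summand of a manifestly injective module: namely $\Gamma(\Hom_{\mathbb{C}}(V, T^{p,q-1}))$ when $q \geq 1$, or $\Gamma(\Hom_{\mathbb{C}}(W, T^{p-1,0}))$ when $q = 0$ and $p \geq 1$, whose injectivity follows from the general principle that the right adjoint of an exact functor preserves injective objects.

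Concretely, assume that $T^{p',q'}$ is injective whenever $p'+q' < p+q$. When $q \geq 1$, Lemma \ref{lm:auxilaryinjective} applied with the substitution $q \mapsto q-1$ yields
$$\Gamma(\Hom_{\mathbb C}(V, T^{p,q-1})) \simeq T^{p,q} \oplus (T^{p-1,q-1})^{\oplus p\theta}.$$
By Lemma \ref{lm:adjoint}, the functor $\Gamma(\Hom_{\mathbb{C}}(V, -))$ is right adjoint to the endofunctor $-\otimes V$ of $\operatorname{Trep}\g$, and since tensoring with $V$ is exact at the level of underlying supervector spaces, this right adjoint sends injectives to injectives. The inductive hypothesis provides injectivity of $T^{p,q-1}$, so $\Gamma(\Hom_{\mathbb{C}}(V, T^{p,q-1}))$ is injective, and hence so is its direct summand $T^{p,q}$. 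When $q = 0$ and $p \geq 1$, the same argument applies with $W$ in place of $V$: the second identity of Lemma \ref{lm:auxilaryinjective} specialized to $(p-1, 0)$ reads $\Gamma(\Hom_{\mathbb C}(W, T^{p-1,0})) \simeq T^{p,0}$, since the summand $(T^{p-1,-1})^{\oplus 0\cdot\theta}$ vanishes, and the adjunction $-\otimes W \dashv \Gamma(\Hom_{\mathbb{C}}(W,-))$ together with injectivity of $T^{p-1,0}$ yields injectivity of $T^{p,0}$.

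The main technical point I would need to verify is that $-\otimes V$ and $-\otimes W$ really are endofunctors of $\operatorname{Trep}\g$, which is what allows the adjunction of Lemma \ref{lm:adjoint} to be applied inside the subcategory. Integrability and the large annihilator condition are clearly preserved under tensoring with $V$ or $W$, so the only nonroutine check is preservation of finite length. I would handle this by d\'evissage: by passing to a composition series it suffices to show that $V(\lambda,\mu) \otimes V$ has finite length for every simple object, and as noted at the beginning of Section 4.4 each $V(\lambda,\mu)$ is a subquotient of $T^{|\lambda|,|\mu|}$, so $V(\lambda,\mu) \otimes V$ is a subquotient of $T^{|\lambda|+1,|\mu|}$, which has finite length by the discussion following the definition of $\operatorname{Trep}\g$ in Section 3.2. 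The case of $-\otimes W$ is identical. Once this is in hand the induction runs through without further issue.
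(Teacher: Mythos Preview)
Your proof is correct and follows essentially the same approach as the paper: both use the adjunction of Lemma~\ref{lm:adjoint} together with Lemma~\ref{lm:auxilaryinjective} to exhibit $T^{p,q}$ as a direct summand of a module that is injective by induction on $p+q$. Your argument is in fact slightly more careful than the paper's, since you explicitly verify that $-\otimes V$ and $-\otimes W$ are endofunctors of $\operatorname{Trep}\g$ (needed for the induction hypothesis to apply to $\Hom_\g(\cdot\otimes V, T^{p,q-1})$), a point the paper uses implicitly but only justifies later in Corollary~\ref{cor:tensor}.
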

\begin{proof} In the  case $p = q = 0$ the statement follows from Proposition \ref{prop:injectivetrivial}.
We first assume  that $q>0$. Then using Lemma \ref{lm:auxilaryinjective} we obtain:
\begin{eqnarray*}
 \Hom_{\g}(M \otimes V, T^{p,q-1}) & \simeq & \Hom_{\g} \left( M, \Gamma (\Hom_{\mathbb C}(V,T^{p,q-1} ) \right)\\
&\simeq&  \Hom_{\g} \left( M, T^{p,q}\oplus (T^{p-1,q-1})^{\oplus p\theta} \right)\\
&\simeq&  \Hom_{\g} \left( M, T^{p,q} \right) \oplus \left( \Hom_{\g} \left( M, T^{p-1,q-1} \right) \right)^{\oplus p\theta}.
\end{eqnarray*}
We  apply induction on $q$.   The induction hypothesis implies that the functors  $\Hom_{\g} (\cdot , T^{p-1,q-1} )$ and $\Hom_{\g} (\cdot \otimes V, T^{p,q-1} )$ are exact. Hence,  $\Hom_{\g} (\cdot , T^{p-1,q} )$ is an exact functor. The base case $q=0$ follows by induction on $p$ and by applying the same identitites as above replacing $V$ by $W$. \end{proof}

\begin{proposition}\label{prop:z-inj} $Z(\lambda, \mu)$ is indecomposable injective in $\operatorname{Trep}\g$ with simple socle $V(\lambda, \mu)$.
\end{proposition}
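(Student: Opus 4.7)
Set $p=|\lambda|$ and $q=|\mu|$. The plan is to verify, in order, three properties of $Z(\lambda,\mu)$: injectivity, indecomposability, and that the socle is simple and isomorphic to $V(\lambda,\mu)$.

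Injectivity is the quickest step. By (\ref{equ:zidempotent}) the idempotent $e(\lambda)\otimes e(\mu)\in\mathcal H_{p,q}$ projects $T^{p,q}$ onto $Z(\lambda,\mu)$ when $p(\lambda)p(\mu)=0$ and onto $Z(\lambda,\mu)\oplus \Pi Z(\lambda,\mu)$ when $p(\lambda)=p(\mu)=1$. In either case $Z(\lambda,\mu)$ is a direct summand of the injective module $T^{p,q}$ (Proposition \ref{prop:injectivetensor}), and therefore is itself injective.

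For indecomposability I would analyze the endomorphism algebra. Lemma \ref{lem:endisomorphism} identifies $\End_\g(T^{p,q})$ with the semisimple superalgebra $\mathcal H_{p,q}$, and so $\End_\g(Z(\lambda,\mu))\simeq f\mathcal H_{p,q}f$ for an appropriate idempotent $f$ of $\mathcal H_{p,q}$ cutting out $Z(\lambda,\mu)$. When $p(\lambda)p(\mu)=0$, the idempotent $f=e(\lambda)\otimes e(\mu)$ is itself primitive, since $\mathcal H_{p,q}f\simeq S(\lambda)\boxtimes S(\mu)=S(\lambda,\mu)$ is a simple $\mathcal H_{p,q}$-supermodule. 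When $p(\lambda)=p(\mu)=1$, the element $e(\lambda)\otimes e(\mu)$ must instead be split as a sum of two orthogonal primitive idempotents whose images on $T^{p,q}$ realize the decomposition $Z(\lambda,\mu)\oplus \Pi Z(\lambda,\mu)$, and I would take $f$ to be the one corresponding to $Z(\lambda,\mu)$. Semisimplicity of $\mathcal H_{p,q}$ then yields that $f\mathcal H_{p,q}f$ is a super-division algebra, isomorphic by Lemma \ref{lem:Schur} to $\mathbb C$ or $C_1$; both are local, so $Z(\lambda,\mu)$ is indecomposable.

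For the socle, an indecomposable injective of finite length is the injective hull of its socle, so the socle must be simple, say $V(\lambda',\mu')$ (up to parity). By Lemma \ref{lm:hom}(i) applied to the embedding $V(\lambda',\mu')\hookrightarrow T^{p,q}$, one has $|\lambda'|=p$ and $|\mu'|=q$; and since the highest weight of $V(\lambda',\mu')$ occurs as a weight of $Z(\lambda,\mu)=V(\lambda)\otimes W(\mu)$ whose maximal weight in the dominance order is $(\lambda,\mu)$ itself, one obtains $(\lambda',\mu')\leq(\lambda,\mu)$. Finally, by Krull--Schmidt applied to the Sergeev decomposition (\ref{equ:decomp}), the indecomposable summands of $T^{p,q}$ are exactly the pairwise non-isomorphic modules $Z(\lambda'',\mu'')$ indexed by strict bipartitions of type $(p,q)$ (non-isomorphism following from their distinct maximal weights). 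The map $(\lambda'',\mu'')\mapsto \soc Z(\lambda'',\mu'')$ is therefore an injection of a finite set into itself; being also order-decreasing, it must be the identity, giving $\soc Z(\lambda,\mu)=V(\lambda,\mu)$, with parity matching the even highest-weight vector $v_\lambda\otimes w_\mu$.

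The main obstacle is the Q-type case $p(\lambda)=p(\mu)=1$ in the indecomposability step, where $e(\lambda)\otimes e(\mu)$ is not itself primitive in $\mathcal H_{p,q}$ and one must identify, and compute in, the correct primitive sub-idempotent cutting out $Z(\lambda,\mu)$ rather than $Z(\lambda,\mu)\oplus \Pi Z(\lambda,\mu)$. A secondary subtlety is establishing pairwise non-isomorphism of the $Z(\lambda'',\mu'')$ in the Krull--Schmidt step, which I would settle by reading off the maximal weight from the character.
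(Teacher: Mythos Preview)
Your proof is correct, and the first two steps (injectivity and indecomposability) coincide with the paper's argument, except that you are more careful in the case $p(\lambda)=p(\mu)=1$: the paper simply says ``since $e(\lambda)$ and $e(\mu)$ are primitive idempotents,'' while you correctly note that $e(\lambda)\otimes e(\mu)$ is then a sum of two primitives and one must pass to the summand cutting out $Z(\lambda,\mu)$ rather than $Z(\lambda,\mu)\oplus\Pi Z(\lambda,\mu)$.

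For the socle, your route differs from the paper's. The paper proceeds by induction on $(\lambda,\mu)$ in the dominance order: if some $V(\lambda',\mu')$ with $(\lambda',\mu')<(\lambda,\mu)$ lay in $\soc Z(\lambda,\mu)$, then by the induction hypothesis $Z(\lambda',\mu')$ is the injective hull of $V(\lambda',\mu')$, so $Z(\lambda',\mu')$ embeds in $Z(\lambda,\mu)$ and, being injective, splits off as a proper direct summand, contradicting indecomposability. You instead run a global pigeonhole: the assignment $(\lambda'',\mu'')\mapsto$ (bipartition underlying $\soc Z(\lambda'',\mu'')$) is an order-decreasing self-map of the finite poset of strict $(p,q)$-bipartitions, and you argue it is injective, hence a bijection, hence the identity. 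Both arguments are valid and use the same ingredients; yours is slightly slicker but, as you note, the injectivity of the self-map needs one extra sentence about parity. The point is that if $\soc Z(\lambda_1,\mu_1)$ and $\soc Z(\lambda_2,\mu_2)$ agree up to parity, then $Z(\lambda_1,\mu_1)$ and $Z(\lambda_2,\mu_2)$ agree up to parity (injective hulls), and since $\Pi$ does not change the maximal weight you still conclude $(\lambda_1,\mu_1)=(\lambda_2,\mu_2)$. Once that is said, your argument is complete.
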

\begin{proof} Let $p = |\lambda|$ and $q = |\mu|$. The injectivity of  $Z(\lambda, \mu)$ follows from  Proposition \ref{prop:injectivetensor} and the fact that   $Z(\lambda, \mu)$ is a direct summand of $T^{p,q}$. The indecomposability of $Z(\lambda, \mu)$ follows from Lemma \ref{lem:endisomorphism} and (\ref{equ:zidempotent}), since $e(\lambda)$ and $e(\mu)$ are primitive idempotents on ${\mathcal H}_p$ and ${\mathcal H}_q$, respectively. 

It remains to show that  the socle of $Z(\lambda, \mu)$ is isomorphic to $V(\lambda, \mu)$. Assume that $V(\lambda', \mu')$ is in the socle of $Z(\lambda, \mu)$ and $(\lambda', \mu') \neq (\lambda, \mu)$. Then looking at the weights of $Z(\lambda, \mu)$ we conclude that $\lambda \geq	 \lambda'$ and $\mu \geq \mu'$ relative to the dominance order of partitions. Moreover $|\lambda'| = p$, $|\mu'| = q$ by Lemma \ref{lm:hom}(i). We now apply induction on $\lambda$ and $\mu$ with respect to the dominance order. For the minimal pair of partitions $\lambda, \mu$ the statement is clear.  By the induction hypothesis on $\lambda', \mu'$, $Z(\lambda', \mu')$ has socle $V(\lambda',\mu')$. Since $V(\lambda',\mu')$ is a submodule of $Z(\lambda, \mu)$, by the injectivity of $Z(\lambda', \mu')$,  we have an injective  homomorphism $Z(\lambda', \mu') \to Z(\lambda,\mu)$. This contradicts with the indecomposability of $Z(\lambda,\mu)$. \end{proof}

\begin{corollary}\label{cor:highestweightsimple} Let $X\in \operatorname{Trep}\g$ be a highest weight module with highest weight $(\lambda,\mu)$. Then
$X$ is isomorphic to $V(\lambda,\mu)$ or $\Pi V(\lambda,\mu)$.
\end{corollary}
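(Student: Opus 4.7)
The plan is to show that $X$ is in fact simple, by embedding it into its injective hull and proving that its $(\lambda,\mu)$-weight space already lies in the socle. First I would embed $X\hookrightarrow I$ into its injective hull in $\operatorname{Trep}\g$. By Proposition \ref{prop:z-inj}, $I\simeq\bigoplus_j Z(\lambda_j,\mu_j)$ up to parity shifts, and $\soc(I)=\soc(X)=\bigoplus_j V(\lambda_j,\mu_j)$. Since each $V(\lambda_j,\mu_j)$ embeds in $X$ and all weights of $X$ are bounded by $(\lambda,\mu)$, we obtain $(\lambda_j,\mu_j)\leq(\lambda,\mu)$ for every $j$.

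The crucial step will be to show that $I_{(\lambda,\mu)}\subseteq\soc(I)$. Using that $Z(\lambda_j,\mu_j)\simeq V(\lambda_j)\otimes W(\mu_j)$ (or the $\widehat\otimes$-variant when $p(\lambda_j)p(\mu_j)=1$) has $(\lambda_j,\mu_j)$ as its unique highest weight, a non-vanishing $Z(\lambda_j,\mu_j)_{(\lambda,\mu)}$ forces $(\lambda,\mu)\leq(\lambda_j,\mu_j)$; combined with the previous inequality, $(\lambda_j,\mu_j)=(\lambda,\mu)$ for any such summand. I would then identify $Z(\lambda,\mu)_{(\lambda,\mu)}=V(\lambda)_\lambda\otimes W(\mu)_{-\mu}$ (with $\otimes_{C_1}$ when $p(\lambda)p(\mu)=1$) with the simple Clifford module associated to the weight $(\lambda,\mu)$ via the parity-case analysis used in Lemma \ref{lem:parity}; since this coincides with $V(\lambda,\mu)_{(\lambda,\mu)}$, we get $Z(\lambda,\mu)_{(\lambda,\mu)}\subseteq V(\lambda,\mu)=\soc(Z(\lambda,\mu))$, yielding the desired inclusion.

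Consequently $X_{(\lambda,\mu)}\subseteq\soc(X)$, and since $X$ is generated as a $\g$-module by its highest weight space, $X\subseteq\soc(X)$, so $X$ is semisimple. The preceding weight analysis rules out simple summands of highest weight other than $(\lambda,\mu)$ from appearing nontrivially in the generating set, so $X\simeq V(\lambda,\mu)^{n_1}\oplus\Pi V(\lambda,\mu)^{n_2}$. Finally, $X$ is a quotient of the Verma module $M(\lambda,\mu)$, whose $(\lambda,\mu)$-weight space is the simple Clifford module, so $\dim X_{(\lambda,\mu)}=\dim V(\lambda,\mu)_{(\lambda,\mu)}$; comparing with $(n_1+n_2)\dim V(\lambda,\mu)_{(\lambda,\mu)}$ forces $n_1+n_2=1$, and $X$ is isomorphic to $V(\lambda,\mu)$ or $\Pi V(\lambda,\mu)$. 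The main obstacle is the weight-space identification $Z(\lambda,\mu)_{(\lambda,\mu)}=V(\lambda,\mu)_{(\lambda,\mu)}$, which requires a careful Clifford-algebra argument handling the $\otimes$ vs $\widehat\otimes$ distinction across the four parity cases of $(p(\lambda),p(\mu))$.
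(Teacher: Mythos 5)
Your proof is correct and follows the same basic strategy as the paper: embed $X$ into its injective hull $I=\bigoplus_j Z(\lambda_j,\mu_j)$, compare weights to pin down the $(\lambda_j,\mu_j)$, and then exploit the fact that $X$ is generated by its highest weight space. The only structural difference is that you prove the intermediate claim $I_{(\lambda,\mu)}\subseteq\soc(I)$, which forces you into the Clifford-algebra identification $Z(\lambda,\mu)_{(\lambda,\mu)}=V(\lambda,\mu)_{(\lambda,\mu)}$ (and the attendant case analysis on parities), whereas the paper sidesteps this entirely: it notes that if $V(\lambda',\mu')\subseteq\soc X$ with $(\lambda',\mu')\neq(\lambda,\mu)$, then the resulting nonzero map $\varphi:X\to Z(\lambda',\mu')$ must kill the highest weight vector $v$ of $X$, because $(\lambda,\mu)$ is strictly above the maximal weight $(\lambda',\mu')$ of $Z(\lambda',\mu')$ and hence is not a weight of $Z(\lambda',\mu')$ at all; since $v$ generates $X$ this gives $\varphi=0$, a contradiction. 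That shortcut makes the Clifford-algebra bookkeeping (your self-identified ``main obstacle'') unnecessary. Your bookkeeping does go through — a dimension count confirms both weight spaces are the simple Clifford module of rank $l(\lambda)+l(\mu)$ and the containment $V(\lambda,\mu)\subseteq Z(\lambda,\mu)$ then forces equality — but it is work the paper avoids.
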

\begin{proof} Assume that $V(\lambda', \mu')$ is in the socle of $X$. Then $\lambda \geq	 \lambda'$ and $\mu \geq \mu'$ relative to the dominance order of partitions and we have a nonzero homomorphism $\varphi : X \to Z(\lambda', \mu')$. If $(\lambda, \mu) \neq (\lambda', \mu')$, then a highest weight vector $v$ of $X$ lies in $\ker \varphi$. But $X$ is generated by $v$, therefore $\varphi = 0$ which leads to a contradiction.  Hence $\lambda = \lambda'$, $\mu = \mu'$ and the statement follows. \end{proof}

\begin{corollary}\label{cor:indecomposable} 
 We have 
$$\soc T^{p,q}=\bigoplus_{|\lambda|=p,|\mu|=q,p(\lambda,\mu)=0} V(\lambda,\mu)\boxtimes S(\lambda,\mu)\oplus 
\bigoplus_{|\lambda|=p,|\mu|=q,p(\lambda,\mu)=1} V(\lambda,\mu)\widehat{\boxtimes}S(\lambda,\mu).$$
 \end{corollary}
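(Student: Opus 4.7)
The plan is to deduce this directly from the Sergeev decomposition (\ref{equ:decomp}), which writes $T^{p,q}$ as a $\g \otimes \mathcal H_{p,q}$-bimodule, together with Proposition \ref{prop:z-inj} which identifies the $\g$-socle of each $Z(\lambda,\mu)$. Since the socle functor commutes with finite direct sums, it suffices to compute $\soc\bigl(Z(\lambda,\mu) \boxtimes S(\lambda,\mu)\bigr)$ and $\soc\bigl(Z(\lambda,\mu) \widehat{\boxtimes} S(\lambda,\mu)\bigr)$ separately, and then reassemble.

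In the M-type case $p(\lambda,\mu) = 0$, the factor $S(\lambda,\mu)$ is merely a multiplicity space from the $\g$-module viewpoint, so $Z(\lambda,\mu) \boxtimes S(\lambda,\mu)$ is, as a $\g$-module, a direct sum of copies of $Z(\lambda,\mu)$ (with appropriate parity shifts controlled by the $\mathbb Z_2$-grading on $S(\lambda,\mu)$). The socle therefore equals $V(\lambda,\mu) \boxtimes S(\lambda,\mu)$ by Proposition \ref{prop:z-inj}, and this remains an irreducible $\g \otimes \mathcal H_{p,q}$-bimodule since $S(\lambda,\mu)$ is irreducible over $\mathcal H_{p,q}$.

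In the Q-type case $p(\lambda,\mu) = 1$, both $V(\lambda,\mu)$ (by Lemma \ref{lem:parity}) and $S(\lambda,\mu)$ are of Q-type, so the $\widehat{\boxtimes}$ product is well defined. Here I would use the natural isomorphism
\[
Z(\lambda,\mu) \boxtimes S(\lambda,\mu) \simeq \bigl(Z(\lambda,\mu) \widehat{\boxtimes} S(\lambda,\mu)\bigr) \oplus \Pi\bigl(Z(\lambda,\mu) \widehat{\boxtimes} S(\lambda,\mu)\bigr)
\]
recorded in Section 2. Applying the socle to both sides and using the M-type computation, the left side has socle $V(\lambda,\mu)\boxtimes S(\lambda,\mu)$, which itself splits as $V(\lambda,\mu)\widehat{\boxtimes}S(\lambda,\mu) \oplus \Pi(V(\lambda,\mu)\widehat{\boxtimes}S(\lambda,\mu))$; matching summands yields $\soc(Z(\lambda,\mu)\widehat{\boxtimes}S(\lambda,\mu)) = V(\lambda,\mu)\widehat{\boxtimes}S(\lambda,\mu)$.

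The proof is essentially bookkeeping once Proposition \ref{prop:z-inj} is in hand. The only point requiring any care is the parity accounting in the Q-type summand, where one has to verify that the $\widehat{\boxtimes}$ on the side of socles matches the $\widehat{\boxtimes}$ of the ambient module; this is handled by the splitting identity above. Summing over all strict bipartitions with $|\lambda| = p$ and $|\mu| = q$ gives the stated formula.
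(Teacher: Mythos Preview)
Your proof is correct and follows the same approach as the paper's own one-line argument, which simply cites Proposition~\ref{prop:z-inj} and~(\ref{equ:decomp}); you have spelled out the parity bookkeeping that the paper leaves implicit. The Q-type case can be handled a bit more directly by noting that $Z(\lambda,\mu)\widehat{\boxtimes}S(\lambda,\mu)$, viewed purely as a $\g$-module, is a direct sum of copies of $Z(\lambda,\mu)$ and $\Pi Z(\lambda,\mu)$ (since $S(\lambda,\mu)$ is free over $C_1$), so its socle is the corresponding sum of $V(\lambda,\mu)$'s, which is precisely $V(\lambda,\mu)\widehat{\boxtimes}S(\lambda,\mu)$; but your matching argument via the splitting identity is also fine.
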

\begin{proof} The decomposition follows from Proposition \ref{prop:z-inj}  and (\ref{equ:decomp}).
\end{proof}

\begin{corollary}\label{cor:tensor} $\operatorname{Trep}(\g)$ is a symmetric monoidal category (but not rigid!). Furthermore,  the functor 
$$\Gamma_n:\operatorname{Trep}\g\to \g_n-\md$$
is a tensor functor.
\end{corollary}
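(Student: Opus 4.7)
The plan is to verify three ingredients separately: closure of $\operatorname{Trep}\g$ under tensor products, the existence of the symmetric monoidal structure isomorphisms, the non-rigidity, and the tensor functor property of $\Gamma_n$.

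For closure under $\otimes$, the only non-obvious property is finite length, since integrability and the large annihilator condition are both preserved by tensor products (the latter being already observed in Section 3.2, and the former since $Q(n)$ acts on tensor products). Given $M, N$ of finite length, I would use Proposition \ref{prop:z-inj} to embed the (finite) socle of $M$ into a finite direct sum $\bigoplus_i Z(\lambda_i, \mu_i)$; by essentiality of the socle in a finite length module and injectivity of this direct sum (Proposition \ref{prop:injectivetensor}), this extends to an injection $M \hookrightarrow \bigoplus_i Z(\lambda_i, \mu_i) \hookrightarrow \bigoplus_i T^{|\lambda_i|, |\mu_i|}$. The same applies to $N$, so $M \otimes N$ embeds into a finite direct sum of modules of the form $T^{p,q} \otimes T^{r,s} = T^{p+r, q+s}$, each of finite length by Sergeev duality and Corollary \ref{cor:indecomposable}. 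The associator, unitor, and braiding, together with their coherence, are inherited from the symmetric monoidal structure on the category of all $\g$-modules, and $\triv \in \operatorname{Trep}\g$ by Proposition \ref{prop:injectivetrivial}.

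For non-rigidity, I would argue that $V$ admits no dual in $\operatorname{Trep}\g$. The natural pairing $(\cdot,\cdot): W \otimes V \to \triv$ is the only candidate evaluation, so the dual of $V$ would have to be (isomorphic to) $W$. But a coevaluation $\eta: \triv \to V \otimes W$ would give a trivial submodule in $V \otimes W$. Applying Corollary \ref{cor:indecomposable} to $T^{1,1} = V \otimes W$ yields $\soc(V \otimes W) \simeq V(\square,\square) \boxtimes S(\square,\square)$, which has no trivial summand. Hence $\Hom_\g(\triv, V \otimes W) = 0$ and no such $\eta$ exists.

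For the tensor functor claim, the inclusion of tensor of invariants into invariants of tensor gives a natural morphism $\mu_{M,N}: \Gamma_n(M) \otimes \Gamma_n(N) \to \Gamma_n(M \otimes N)$. I would first check it is an isomorphism on the generating objects $T^{p,q}$: writing $V = V_n \oplus V'$, $W = W_n \oplus W'$ with $V', W'$ the natural and conatural modules for $\cc_n \simeq \q(\infty)$, the analogue of Lemma \ref{lm:hom}(ii) inside $\cc_n$ forces $((V')^{\otimes a} \otimes (W')^{\otimes b})^{\cc_n} = 0$ unless $a=b=0$, so $\Gamma_n(T^{p,q}) \cong V_n^{\otimes p} \otimes W_n^{\otimes q}$ and $\mu$ is an isomorphism on $T$-type modules. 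For general $M, N$, I would combine the injective embeddings of $M$ and $N$ from the first step with the exactness of $\Gamma_n$ on $\operatorname{Trep}^k\g$ for $n \gg k$ (Lemma \ref{lm:exact}) and naturality of $\mu$. The main obstacle I anticipate is upgrading the tensor functor isomorphism from $T$-type objects to arbitrary objects in $\operatorname{Trep}\g$ while handling the fact that $\Gamma_n$ is only guaranteed exact on $\operatorname{Trep}^k\g$ for sufficiently large $n$; the coherence checks for the tensor functor structure (compatibility with associators and braidings) should then be routine once $\mu$ is established as a natural isomorphism.
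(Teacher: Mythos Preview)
Your argument for closure under tensor products is exactly the paper's: embed $M$ and $N$ into finite direct sums of $T^{p,q}$'s using injectivity (Proposition \ref{prop:injectivetensor}), and observe $T^{p,q}\otimes T^{r,s}=T^{p+r,q+s}$ has finite length.

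For the tensor functor claim, the paper also reduces to $M=T^{p,q}$, $N=T^{r,s}$, but via \emph{left exactness} of $\Gamma_n$ (which holds for every $n$) rather than the full exactness of Lemma \ref{lm:exact}. This completely sidesteps the obstacle you flag. Concretely: the natural map $\mu_{M,N}:\Gamma_n(M)\otimes\Gamma_n(N)\to\Gamma_n(M\otimes N)$ is always injective, so one only needs surjectivity. Fix $N=T^{r,s}$ and take an injective resolution $0\to M\to I^0\to I^1$ with each $I^j$ a finite sum of $T^{p,q}$'s. Tensoring with $N$ is exact, and applying the left exact $\Gamma_n$ gives two exact rows connected by $\mu_{\,\cdot\,,N}$; since $\mu_{I^j,N}$ are isomorphisms by the explicit computation, the five lemma gives that $\mu_{M,N}$ is an isomorphism. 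Repeat with the roles of $M$ and $N$ swapped. Your route via Lemma \ref{lm:exact} would only establish the isomorphism for $n\gg k$ when $M,N\in\operatorname{Trep}^k\g$, so it does not prove the statement as written.

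Your non-rigidity argument has a small gap: you assert that a dual of $V$ ``would have to be'' $W$, but you have not ruled out other candidates. The fix is immediate with the same socle computation. If $V$ had any dual $V^*\in\operatorname{Trep}\g$, then $\Hom_\g(\triv,V\otimes V^*)\neq 0$ (it contains the coevaluation). But $V^*$ embeds in a finite sum of $T^{p,q}$'s, so $V\otimes V^*$ embeds in a finite sum of $T^{p+1,q}$'s with $p\ge 0$; by Corollary \ref{cor:indecomposable} the socle of each $T^{p+1,q}$ contains no copy of $\triv=V(\emptyset,\emptyset)$, contradiction.
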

\begin{proof} We have to check that $\operatorname{Trep}(\g)$ is closed under tensor products. This follows from the injectivity of $T^{p,q}$ and the fact that any module in $\operatorname{Trep}(\g)$ 
is a submodule of a finite direct sum  $\displaystyle\bigoplus_{i=1}^s T^{p_i,q_i}$. Since $\Gamma_n$ is left exact it suffices to check that
$\Gamma_n(M\otimes N)\simeq \Gamma_nM\otimes\Gamma_nN$ for $M=T^{p,q}$ and $N=T^{r,s}$. The latter is straightforward.
\end{proof}

\section{On tensor products and extensions in $\operatorname{Trep}\g$}
\subsection{Diagrammatic description of $\Hom(T^{p,q},T^{r,s})$}
Recall that $\Hom(T^{p,q},T^{r,s})\neq 0$ implies that $p-r=q-s\geq 0$, see Lemma \ref{lm:hom}.

Let $C(p,q,r)=\Hom_\g(T^{p,q}, T^{p-r,q-r})$ and $c(p,q,r)=\dim C(p,q,r)$.
\begin{lemma}\label{lem:formula} For any $p,q,r$ such that $r\leq\min(q,p)$ we have that:
$$c(p,q,r) = \frac{p!q!\theta^{p+q-r}}{r!}$$
\end{lemma}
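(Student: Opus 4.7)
The strategy is to generalize the computation in the proof of Lemma \ref{lem:endisomorphism}, which is exactly the case $r=0$ of this formula. Combining Lemma \ref{lm:adjoint} (with $M=T^{p-1,q}$, $L=V$, $N=T^{p-r,q-r}$) with the $\g$-module decomposition $\Gamma(\Hom_{\mathbb C}(V,T^{p-r,q-r}))\simeq T^{p-r,q-r+1}\oplus (T^{p-r-1,q-r})^{\oplus (p-r)\theta}$ from Lemma \ref{lm:auxilaryinjective}, one obtains
\[
C(p,q,r)\simeq \Hom_\g(T^{p-1,q},T^{p-r,q-r+1})\,\oplus\, \Hom_\g(T^{p-1,q},T^{p-r-1,q-r})^{\oplus (p-r)\theta}.
\]
Reading off the indices (so that the first summand matches $C(p-1,q,r-1)$ and the second matches $C(p-1,q,r)$) yields the recurrence
\[
c(p,q,r)=c(p-1,q,r-1)+(p-r)\theta\cdot c(p-1,q,r),
\]
valid for $p\geq 1$ and $0\leq r\leq\min(p,q)$. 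The boundary conventions $c(p-1,q,-1)=0$ (when $r=0$ the first Hom vanishes by Lemma \ref{lm:hom}(ii)) and $c(p-1,q,p)=0$ (when $r=p$ the coefficient $(p-r)\theta$ kills the second term) are automatic.

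For the base case I would take $r=0$, where $c(p,q,0)=\dim\End_\g(T^{p,q})=p!\,q!\,\theta^{p+q}$ by Lemma \ref{lem:endisomorphism}, matching $\tfrac{p!\,q!\,\theta^{p+q-0}}{0!}$. Then induct, say on $p$ (the case $p=0$ forces $r=0$ by Lemma \ref{lm:hom}(ii) and is already handled). Using the inductive hypothesis on the two terms of the recurrence and the identity $\theta\cdot\theta^{k}=\theta^{k+1}$ in $\mathbb{Z}[\varepsilon]/(\varepsilon^2-1)$ (which is immediate from $\theta^2=2\theta$), the right-hand side becomes
\[
\frac{(p-1)!\,q!\,\theta^{p+q-r}}{(r-1)!}+(p-r)\cdot\frac{(p-1)!\,q!\,\theta^{p+q-r}}{r!}=\frac{(p-1)!\,q!\,\theta^{p+q-r}}{r!}\bigl(r+(p-r)\bigr)=\frac{p!\,q!\,\theta^{p+q-r}}{r!},
\]
completing the induction.

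There is no real obstacle here beyond bookkeeping: the only point that deserves attention is the interpretation of the multiplicity $(p-r)\theta\in\mathbb{N}[\varepsilon]/(\varepsilon^2-1)$, i.e.\ that the second summand really contributes $(p-r)$ copies of $T^{p-r-1,q-r}$ together with $(p-r)$ copies of its parity shift, so that its contribution to $\dim\Hom_\g$ is $(p-r)\theta\cdot c(p-1,q,r)$ as an element of $\mathbb{Z}[\varepsilon]/(\varepsilon^2-1)$. With that convention the recurrence and the algebraic check above give the claim.
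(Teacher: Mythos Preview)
Your proof is correct and follows essentially the same approach as the paper: the paper derives the two symmetric recurrences $c(p,q,r)=c(p-1,q,r-1)+(p-r)\theta\,c(p-1,q,r)$ and $c(p,q,r)=c(p,q-1,r-1)+(q-r)\theta\,c(p,q-1,r)$ via Lemmas~\ref{lm:adjoint} and~\ref{lm:auxilaryinjective} (writing out the $W$-version and noting the $V$-version is analogous), then says the formula follows by induction. Your version is a bit more explicit about the base case and the algebraic verification, and your parenthetical about $\theta\cdot\theta^k=\theta^{k+1}$ ``following from $\theta^2=2\theta$'' is unnecessary (this is just the definition of powers), but none of this affects correctness.
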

\begin{proof}
 We will prove the following recursive relation
\begin{eqnarray*}
c(p,q,r)&=&c(p-1,q,r-1)+(p-r)\theta c(p-1,q,r),\\c(p,q,r)&=&c(p,q-1,r-1)+(q-r)\theta c(p,q-1,r).
\end{eqnarray*}
Indeed, we have
\begin{eqnarray*}
 \Hom_{\g}(T^{p,q}, T^{p-r,q-r}) & = &   \Hom_{\g}(T^{p,q-1},\Hom_{\mathbb C}(W,T^{p-r,q-r}))\\  & = & \Hom_{\g} \left( T^{p,q-1}, \Gamma (\Hom_{\mathbb C}(W,T^{p-r,q-r} ) \right)\\
&=&  \Hom_{\g} \left( T^{p,q-1}, T^{p-r+1,q-r}\oplus (T^{p-r,q-r-1})^{\oplus (q-r)\theta} \right).
\end{eqnarray*}
This implies the second recursive relation. The proof of the first one is similar. Now the statement follows easily by induction.
\end{proof}
Our next step is to describe precisely the superspace $C(p,q,r)=\Hom_\g(T^{p,q}, T^{p-r,q-r})$.
For this we will use  diagrams, similar to the ones introduced in \cite{JK}.

Let $D(p,q,r)$ denote the set of diagrams described as follows. Every diagram in  $D(p,q,r)$ has two horizontal rows of nodes with exactly 
$p$ white and $q$ black nodes in the top row, and exactly $p-r$ white and $q-r$ black nodes  in the bottom row.
The nodes are connected by edges that are subject to the following rules.
\begin{itemize}
\item Every node is connected to exactly one node by one edge. In other words we have a prefect pairing.
\item Every node in the bottom row is connected to exactly one node of the same color in the top row.
\item Every node in the top row is connected either to a node of the same color in the bottom row or to a node of the opposite color in the top row.
\item Every edge is either marked or unmarked.
\end{itemize}
If $d\in D(p,q,r)$ and $d'\in D(p-r,q-r,s)$, then we define $d'\cdot d\in D(p,q,r+s)$ by concatenating the diagrams $d$ and $d'$ and removing the middle row. 
An edge of the concatenated diagram 
is marked if the number of marked edges involved the concatenation of the edge is odd. An edge is unmarked if it is not marked. 
An example of a concatenation of three diagrams is presented below.
$${\beginpicture
\setcoordinatesystem units <0.78cm,0.39cm>
\put{$\circ$} at  1 3
\put{$\circ$} at  2 0  \put{$\circ$} at  2 3
\put{$\bullet$} at  3 0   \put{$\bullet$} at  3 3
\put{$\bullet$} at  4 3
\plot 1 3 2 0 /
\plot 4 3 3 0 /
\arrow <3 pt> [1,2] from 3 0 to  3.5 1.5

\setsolid

\put{$=$} at 5 1.5

 \put{$\circ$} at  6 3
\put{$\circ$} at  7 0  \put{$\circ$} at  7 3
\put{$\bullet$} at  8 0  \put{$\bullet$} at  8 3
\put{$\bullet$} at  9 3
\plot 6 3 7 0 /

\plot 9 3 8 0 /
\arrow <3 pt> [1,2] from 8 0 to 8.5 1.5

\put{$\circ$} at  6 4  \put{$\circ$} at  6 7
\put{$\circ$} at  7 4  \put{$\circ$} at  7 7
\put{$\bullet$} at  8 4  \put{$\bullet$} at  8 7
\put{$\bullet$} at  9 4  \put{$\bullet$} at  9 7
\plot 6 4 6 7 /
\plot 9 4 9 7 /
\plot 7 4 7 7 /
\plot 8 4 8 7 /
\arrow <3 pt> [1,2] from 8 4 to 8 5.7
\arrow <3 pt> [1,2] from 7 4 to 7 5.7

\put{$\circ$} at  7 -1  \put{$\circ$} at  7 -4
\put{$\bullet$} at  8 -1  \put{$\bullet$} at  8 -4

\plot 7 -1 7 -4 /
\plot 8 -1 8 -4 /

\setsolid

\setquadratic
\plot 7 3 7.5 2 8 3 /
\plot 2 3 2.5 3.5 3 3 /
\endpicture}$$

Next we define a map $\gamma:D(p,q,r)\to C(p,q,r)$.  Let  $d\in D(p,q,r)$.  Enumerate the nodes of $d$ in the bottom and in the top row, so that
in the top row the white nodes are labelled by the numbers $1,\dots,p$ and the black nodes are labelled by   $p+1,\dots,p+q$, while in the bottom row, 
the white nodes are labelled by $1,\dots,p-r$ and the  black nodes are labelled by $p+1-r,\dots,p+q-2r$.
Denote by $H^+(d)$ (respectively, $H^-(d)$) the set of pairs $(i,j)$, $i<j$, of nodes in the top row joined by an unmarked (respectively, marked) edge.
For any node $i$ in the bottom row by $s(i)$ we denote the paired to $i$ node in the top row. We let  $m(i)=0$ (respectively, $m(i)=1$) if the   
edge joining $i$ and $s(i)$ is unmarked (respectively, marked). We next introduce the canonical decomposition of $d$ into elementary diagrams $s(p,q,i)$, $o(p,q,i)$, $t(p,q)$ as follows.

The first type of elementary diagrams are 
$s(p,q,i)\in D(p,q,0)$, $i\neq p,p+q$, defined by the conditions $s(j)=j$ if $j\neq i,i+1$, $s(i)=i+1, s(i+1)=i$, and all edges of $s(p,q,i)$ are unmarked.  For example $s(2,0,1)$ is the  diagram:
${\beginpicture
\setcoordinatesystem units <0.78cm,0.39cm>

\put{$\circ$} at  0 -1  \put{$\circ$} at  1 1
\put{$\circ$} at  0 1  \put{$\circ$} at  1 -1
\plot  0 -1 1 1 /
\plot 0 1 1 -1 /
\endpicture}$
\quad while $s(0,2,1)$ is:
${\beginpicture
\setcoordinatesystem units <0.78cm,0.39cm>
\put{$\bullet$} at  0 -1  \put{$\bullet$} at  1 1
\put{$\bullet$} at  0 1  \put{$\bullet$} at  1 -1
\plot  0 -1 1 1 /
\plot 0 1 1 -1 /
\setsolid
\setquadratic
\endpicture}$. We call a \emph{permutation diagram} any diagram formed by the concatenation of diagrams $s(p,q,i)$. The set of all permutation diagrams form a group isomorphic to $S_p\times S_q$. 

Next,  
$o(p,q,i)\in D(p,q,0)$ is the diagram with $s(j)=j$ for all $j=1,\dots p+q$ and with one marked edge joining $i$ with $i$. For example, $o(1,0,1)$ is: $
{\beginpicture
\setcoordinatesystem units <0.6cm,0.3cm>
\setplotarea x from 0.5 to 1.5, y from -1.5 to 2.5
\put{$\circ$} at  1 2
\put{$\circ$} at  1 -1
\plot 1 2  1 -1 /
\arrow <3 pt> [1,2] from 1 -1 to 1 0.75
\endpicture}
$

Finally, let
$t(p,q)\in D(p,q,1)$ be defined by the conditions $H^+(t(p,q))=(p,p+1)$ and $s(i)=i$ for all $i=1,\dots p-1$, $s(i)=i+2$ for $i=p,\dots,p+q-2$. For example, ${\beginpicture
\setcoordinatesystem units <0.78cm,0.39cm>
\setplotarea x from 0 to 4.5, y from -1.5 to 5
\put{$t(1,1)= $} at 0 1.5
\put{$\circ$} at  1.5 2
\put{$\bullet$} at  2.5 2
\setsolid
\setquadratic
\plot 1.5 2 2 2.5 2.5 2  /
\endpicture}$

For any $u_1,\dots,u_p\in V$ and $u_{p+1},\dots, u_{p+q}\in W$  we set $$u=u_1\otimes\dots\otimes u_{p+q}\in T^{p,q}$$ and define 
$$\tilde{s}(p,q,i)(u):=
(-1)^{p(u_i)p(u_{i+1})}u_1\otimes\dots\otimes u_{i-1}\otimes u_{i+1}\otimes u_i\otimes u_{i+2}\otimes\dots\otimes u_{p+q},$$
$$\tilde{o}(p,q,i)(u):=
(-1)^{p(u_1)+\dots+p(u_{i-1})}u_1\otimes\dots\otimes P(u_i)\otimes \dots\otimes u_{p+q},$$
$$\tilde{t}(p,q)(u):=(-1)^{p(u_p)p(u_{p+1})}(u_{p+1},u_p)u_1\otimes \dots \otimes u_{p-1}\otimes u_{p+2}\otimes\dots\otimes u_{p+q}.$$

Note that every $d\in D(p,q,r)$ can be written as a concatenation of elementary diagrams:
$$d=t(p-r+1,q-r+1)\cdot\dots\cdot t(p,q)\cdot o(p,q,i_1)\cdot\dots \cdot o(p,q,i_k)\cdot s(p,q,j_1)\cdot\dots \cdot s(p,q,j_l).$$
For any $d \in D(p,q,r)$, we fix one such decomposition and we set
$$\gamma(d):=\tilde{t}(p-r+1,q-r+1)\circ\dots\circ \tilde{t}(p,q)\circ \tilde{o}(p,q,i_1)\circ\dots \circ \tilde{o}(p,q,i_k)\circ \tilde{s}(p,q,j_1)\circ\dots \circ \tilde{s}(p,q,j_l).$$
Then $\gamma(d)\in C(p,q,r)$ and we have
$$\gamma(d)(u)=
(-1)^{\sigma(u,d)}\prod_{(i,j)\in H^+(d)}(u_j,u_i)\prod_{(i,j)\in H^-(d)}(u_j,Pu_i)P^{m(1)}u_{s(1)}\otimes\dots\otimes P^{m(p+q-2r)}u_{s(p+q-2r)},$$
where the formula for $\sigma (u,d)$ is rather long and is not needed in this paper.
From this formula we see that $\gamma(D(p,q,r))$ is a linearly independent set in $C(p,q,r)$. On the other hand,
Lemma \ref{lem:formula} implies that $c(p,q,r)=|D(p,q,r)|$. Therefore, $\gamma(D(p,q,r))$ forms a basis of $C(p,q,r)$. 
Moreover, from the decomposition of $d$  above we see that $C(p,q,r)$ is generated by
$$u(p,q,r):=t(p-r+1,q-r+1)\circ\dots\circ t(p,q)$$
as a right $\mathcal H(p,q)$-module. The following lemma gives a precise description of $C(p,q,r)$ as an ${\mathcal H}_{p-r,q-r}-{\mathcal H}_{p,q}$-bimodule.

\begin{lemma}\label{lm:homtensor} Consider the embedding $\mathcal H_{r,r}\hookrightarrow \mathcal H_{p,q}$ defined by 
$$s(r,r,1)\mapsto s(p,q,p-r),\dots, s(r,r,r-1)\mapsto s(p,q,p-1),$$
$$s(r,r,r+1)\mapsto s(p,q,p+1),\dots, s(r,r,2r-1)\mapsto s(p,q,p+r-1),$$
$$o(r,r,1)\mapsto o(p,q,p-r),\dots, o(r,r,2r)\mapsto s(p,q,p+r).$$
Then we have the following isomorphism of ${\mathcal H}_{p-r,q-r}-{\mathcal H}_{p,q}$-bimodules:
$$
C(p,q,r) \simeq \Ind_{{\mathcal H}_{r,r}}^{{\mathcal H}_{p,q}} C(r,r,r),
$$
where the definition of the left action of ${\mathcal H}_{p-r,q-r}$ on $ \Ind_{{\mathcal H}_{r,r}}^{{\mathcal H}_{p,q}} C(r,r,r)$ relies on the fact that  
${\mathcal H}_{p-r,q-r}$ and ${\mathcal H}_{r,r}$ are commuting subalgebras of ${\mathcal H}_{p,q}$.
\end{lemma}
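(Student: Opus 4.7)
The plan is to construct an explicit surjective bimodule homomorphism from $\Ind_{{\mathcal H}_{r,r}}^{{\mathcal H}_{p,q}}C(r,r,r)$ to $C(p,q,r)$ and to conclude via a superdimension count that it is an isomorphism. The key observation is that after reordering tensor factors, $T^{p,q}\simeq V^{\otimes(p-r)}\otimes T^{r,r}\otimes W^{\otimes(q-r)}$, and under this identification $u(p,q,r)$ coincides with $\operatorname{id}_{V^{\otimes(p-r)}}\otimes u(r,r,r)\otimes\operatorname{id}_{W^{\otimes(q-r)}}$. More generally, to each $c\in C(r,r,r)$ one associates its inflation $\tilde c:=\operatorname{id}\otimes c\otimes\operatorname{id}\in C(p,q,r)$, and I would define
\[
\Psi\colon C(r,r,r)\otimes_{\mathbb C}{\mathcal H}_{p,q}\to C(p,q,r),\qquad \Psi(c\otimes h):=\tilde c\circ h.
\]

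I would then check that $\Psi$ descends to the tensor product over ${\mathcal H}_{r,r}$, yielding a bimodule map $\bar\Psi\colon\Ind_{{\mathcal H}_{r,r}}^{{\mathcal H}_{p,q}}C(r,r,r)\to C(p,q,r)$. The essential compatibility $\widetilde{c\cdot k}=\tilde c\circ\iota(k)$ for $k\in{\mathcal H}_{r,r}$ holds because the embedding $\iota$ is chosen precisely so that $\iota(k)$ acts on $T^{p,q}$ only on the middle $T^{r,r}$-factor, in exactly the way $k$ acts on $T^{r,r}$. Right ${\mathcal H}_{p,q}$-linearity is tautological, and left ${\mathcal H}_{p-r,q-r}$-equivariance follows because the image of ${\mathcal H}_{p-r,q-r}$ in ${\mathcal H}_{p,q}$ acts on the outer $V^{\otimes(p-r)}$ and $W^{\otimes(q-r)}$ factors in perfect correspondence with its natural post-composition action on $T^{p-r,q-r}$. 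Surjectivity is immediate from the canonical decomposition $d=u(p,q,r)\cdot e$ with $e\in D(p,q,0)={\mathcal H}_{p,q}$ established prior to the lemma: every basis element $\gamma(d)$ of $C(p,q,r)$ has the form $\bar\Psi(u(r,r,r)\otimes\gamma(e))$.

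To conclude injectivity I would compare superdimensions. By Lemma \ref{lem:formula} and Lemma \ref{lem:endisomorphism}, $\dim C(p,q,r)=p!q!\theta^{p+q-r}/r!$, $\dim{\mathcal H}_{p,q}=p!q!\theta^{p+q}$, $\dim{\mathcal H}_{r,r}=(r!)^2\theta^{2r}$, and $\dim C(r,r,r)=r!\theta^r$. Since ${\mathcal H}_{p,q}={\mathcal H}_p\otimes{\mathcal H}_q$ is free as a left module over ${\mathcal H}_{r,r}={\mathcal H}_r\otimes{\mathcal H}_r$ (a standard consequence of the signed-permutation basis of the Sergeev algebra, applied to each tensor factor), the superdimension of the induced module equals $\dim C(r,r,r)\cdot\dim{\mathcal H}_{p,q}/\dim{\mathcal H}_{r,r}=p!q!\theta^{p+q-r}/r!=\dim C(p,q,r)$, so $\bar\Psi$ is an isomorphism. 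The principal technical obstacle I expect is the sign bookkeeping hidden in the tensor reordering $T^{p,q}\simeq V^{\otimes(p-r)}\otimes T^{r,r}\otimes W^{\otimes(q-r)}$ and in the definition of $\gamma$: these signs must match so that the clean-looking identity $u(p,q,r)=\operatorname{id}\otimes u(r,r,r)\otimes\operatorname{id}$ really holds with the sign conventions built into the elementary diagrams $t(p,q)$ and the action formulas for $\tilde t(p,q)$.
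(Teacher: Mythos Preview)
Your proposal is correct and follows essentially the same approach as the paper's proof: construct a right $\mathcal H_{p,q}$-module map from the induced module to $C(p,q,r)$ via the generator $u(p,q,r)$ (your inflation $\widetilde{u(r,r,r)}$), deduce surjectivity from the fact that $u(p,q,r)$ generates $C(p,q,r)$ as a right $\mathcal H_{p,q}$-module, and conclude by the dimension count from Lemma~\ref{lem:formula}. The paper compresses your well-definedness and bimodule checks into the single observation that the right $\mathcal H_{r,r}$-submodule of $C(p,q,r)$ generated by $u(p,q,r)$ is isomorphic to $C(r,r,r)$, which is exactly your identity $\widetilde{c\cdot k}=\tilde c\circ\iota(k)$ read diagrammatically; your concern about sign bookkeeping is legitimate but is absorbed by the paper into the phrase ``follows directly from the diagrammatic presentation of $u(p,q,r)$''.
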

\begin{proof} Since $C(p,q,r)$ is generated by $u(p,q,r)$ as a right $\mathcal H_{p,q}$-module and the dimensions of $C(p,q,r)$ and
of $\Ind_{{\mathcal H}_{r,r}}^{{\mathcal H}_{p,q}} C(r,r,r)$ coincide, it remains to verify that the right $\mathcal H_{r,r}$-submodule generated
by $u(p,q,r)$ is isomorphic to $C(r,r,r)$. The latter follows directly from the diagrammatic presentation of $u(p,q,r)$.
\end{proof}
\begin{remark}
The map $\gamma$ is not a homomorphism of diagramatic algebras. However, if $d_1 \in D(p,q,r)$ and $d_2 \in D(p-r,q-r,s)$, then 
$$
\gamma (d_1 \cdot d_2) = (-1)^{\langle d_1, d_2\rangle} \gamma(d_1) \circ \gamma(d_2)
$$
for some function $\langle \cdot , \cdot \rangle : D(p,q,r) \times  D(p-r,q-r,s) \to {\mathbb Z}_2$.
\end{remark}

\subsection{Socle filtrations of $T^{p,q}$ and $Z(\lambda,\mu)$}
\begin{proposition} \label{prop:socle} We have 
$$\soc^r T^{p,q}= \bigcap_{\varphi \in \hom_{\g} (T^{p,q}, T^{p-r,q-r})} \ker \varphi.$$
\end{proposition}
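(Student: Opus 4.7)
The strategy is to establish both inclusions in the asserted equality. Write $K^r := \bigcap_{\varphi \in \hom_{\g}(T^{p,q}, T^{p-r,q-r})} \ker \varphi$. I would prove $\soc^r T^{p,q} \subseteq K^r$ via a disjointness argument on composition factors, and $K^r \subseteq \soc^r T^{p,q}$ via the injectivity of $T^{p-r,q-r}$.

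For $\soc^r T^{p,q} \subseteq K^r$, the plan is to prove two structural facts by induction on $r$: first, every composition factor of $\soc^r T^{p,q}$ is isomorphic (up to parity shift) to some $V(\lambda,\mu)$ with $|\lambda| \geq p-r+1$; second, every composition factor of $T^{p-r,q-r}$ is isomorphic to some $V(\lambda',\mu')$ with $|\lambda'| \leq p-r$. The first claim at $r=1$ is Corollary \ref{cor:indecomposable}, and the inductive step analyzes $\soc^r T^{p,q}/\soc^{r-1} T^{p,q} = \soc(T^{p,q}/\soc^{r-1} T^{p,q})$, using Proposition \ref{prop:z-inj} to control which simples can appear as submodules. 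The second claim follows from Lemma \ref{lm:hom}(i) combined with the decomposition \eqref{equ:decomp} and iterated use of the embedding $V(\lambda',\mu') \hookrightarrow Z(\lambda',\mu') \subseteq T^{|\lambda'|,|\mu'|}$. Combining these, for any $\varphi \in \hom_{\g}(T^{p,q}, T^{p-r,q-r})$ the image $\varphi(\soc^r T^{p,q}) \subseteq T^{p-r,q-r}$ would have composition factors lying simultaneously in the disjoint sets $\{V(\lambda,\mu):|\lambda|\geq p-r+1\}$ and $\{V(\lambda,\mu):|\lambda|\leq p-r\}$, which is impossible, so $\varphi(\soc^r T^{p,q}) = 0$.

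For the reverse inclusion $K^r \subseteq \soc^r T^{p,q}$, set $N := T^{p,q}/\soc^r T^{p,q}$; by the first inclusion every $\varphi$ factors through $N$. Given $x \in T^{p,q}$ with $x \notin \soc^r T^{p,q}$, let $\bar x \in N$ be its image. Since $N$ has finite length (Lemma \ref{lm:finitelength}), the submodule $\langle \bar x \rangle$ contains a simple submodule $S \subseteq \soc N$. The key refinement needed here is that any such $S$ is isomorphic to some $V(\lambda,\mu)$ with $|\lambda| = p - r$; granted this, $S \hookrightarrow Z(\lambda,\mu) \subseteq T^{p-r,q-r}$, and by injectivity of $T^{p-r,q-r}$ (Proposition \ref{prop:injectivetensor}) this extends to a homomorphism $\bar\varphi: N \to T^{p-r,q-r}$. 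Composing with the quotient $T^{p,q} \to N$ yields $\varphi: T^{p,q} \to T^{p-r,q-r}$; since $\bar\varphi$ is injective on $S \subseteq \langle \bar x \rangle$, it cannot kill $\bar x$ (otherwise it would annihilate the cyclic submodule containing $S$), so $\varphi(x) \neq 0$ and $x \notin K^r$.

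The principal obstacle is the refined structural claim that every simple submodule of $N = T^{p,q}/\soc^r T^{p,q}$ is $V(\lambda,\mu)$ with $|\lambda| = p - r$. The upper bound $|\lambda| \leq p - r$ requires strengthening the inductive version of the first structural fact to track multiplicities: the multiplicity of each $V(\lambda,\mu)$ with $|\lambda|\geq p-r+1$ in $\soc^r T^{p,q}$ must equal its full multiplicity in $T^{p,q}$, leaving no such factor to appear as a submodule of $N$. The lower bound $|\lambda| \geq p - r$ is a Loewy-depth statement asserting that a composition factor $V(\lambda,\mu)$ with $|\lambda| = p - k$ first appears in $\soc^{k+1} T^{p,q}/\soc^k T^{p,q}$ and no earlier. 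Both bounds should follow by simultaneous induction on $r$ using the injective hull structure from Proposition \ref{prop:z-inj} together with the adjunction from Lemma \ref{lm:adjoint}.
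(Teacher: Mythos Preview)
Your proposal has a genuine gap, and it sits exactly where you yourself flag the ``principal obstacle''; moreover the same circularity already infects your forward inclusion.

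For $\soc^r T^{p,q}\subseteq K^r$ you want to show by induction on $r$ that every composition factor of $\soc^r T^{p,q}$ is a $V(\lambda,\mu)$ with $|\lambda|\ge p-r+1$. The base case is Corollary~\ref{cor:indecomposable}, but the step fails: knowing that the factors of $\soc^{r-1}$ satisfy $|\lambda|\ge p-r+2$ tells you nothing about which simples can appear as submodules of $T^{p,q}/\soc^{r-1}$. Proposition~\ref{prop:z-inj} only identifies $\soc Z(\lambda,\mu)$; it gives no lower bound on $|\lambda'|$ for simples in $\soc(T^{p,q}/\soc^{r-1})$. The only multiplicity fact available at this point is that every factor of $T^{p,q}$ with $|\lambda'|=p$ already lies in $\soc^1$ (this follows from $\End_\g(T^{p,q})\simeq\mathcal H_{p,q}$ being semisimple, which forces $\Hom_\g(Z(\lambda,\mu),Z(\lambda',\mu'))=0$ for distinct pairs of the same bidegree). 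That handles $r=1$ but does not propagate: nothing yet excludes a simple $V(\lambda',\mu')$ with $|\lambda'|=p-5$ from appearing in $\soc^2/\soc^1$. Ruling this out is precisely what the proposition asserts, so your induction assumes its own conclusion. Your reverse inclusion has the same defect, as you acknowledge.

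The paper avoids this by a different mechanism. It proves only the case $r=1$ directly and then inducts by passing to $T^{p-r+1,q-r+1}$, not to $T^{p,q}/\soc^{r-1}$. The decisive input is the diagrammatic description of $C(p,q,r)=\Hom_\g(T^{p,q},T^{p-r,q-r})$: every diagram with $r$ caps factors as a single contraction followed by a diagram with $r-1$ caps, so every $\varphi\in C(p,q,r)$ is a linear combination of composites through $T^{p-1,q-1}$. Hence $\bigcap_{\varphi\in C(p,q,r)}\ker\varphi\supseteq\bigcap_{\psi\in C(p,q,1)}\ker\psi$ for every $r\ge 1$, and the a~priori larger intersection over all $r$ collapses to the $r=1$ intersection. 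Combined with the multiplicity fact above (so that $T^{p,q}/\soc^1$ embeds in a direct sum of various $T^{p-s,q-s}$), this yields the $r=1$ case; the general case then follows by applying the $r=1$ statement to $T^{p-r+1,q-r+1}$ together with the same factorization. This factorization of contractions is the ingredient your argument is missing, and the proposed ``simultaneous induction'' cannot replace it.
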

\begin{proof}
It is sufficient to prove the statement for $r=1$ since then we can proceed by induction.
By Corollary \ref{cor:indecomposable} all simple subquotients of $T^{p,q}/\soc T^{p,q}$ are of the form $V(\lambda,\mu)$ or $\Pi V(\lambda,\mu)$
with $|\lambda|<p$ and $|\mu|<q$. Therefore we have an inclusion of $T^{p,q}/\soc T^{p,q}$  into a direct sum of several copies of
$T^{p-r,q-r}$ for different $r$. Hence
$$\soc^1 T^{p,q}= \bigcap_{r \leq \min (p,q)} \bigcap_{\varphi \in \hom_{\g} (T^{p,q}, T^{p-r,q-r})} \ker \varphi.$$
But, using the diagrammatic presentation of $C(p,q,r)$, every $\varphi\in \hom_{\g} (T^{p,q}, T^{p-r,q-r})$ can be factored through
some map $\psi\in\hom_{\g} (T^{p,q}, (T^{p-1,q-1})^{\oplus l})$. Hence $\ker\varphi\subset\ker\psi$ and we obtain
$$\soc^1 T^{p,q}= \bigcap_{\varphi \in \hom_{\g} (T^{p,q}, T^{p-1,q-1})} \ker \varphi.$$ 
\end{proof}

Our next goal is to determine the socle filtration of the indecomposable injective modules $Z(\lambda,\mu)$. For this we need three lemmas.

\begin{lemma}
The following identity  of ${\mathcal H}_{p,q}$-bimodules holds.
$$
{\mathcal H}_{p,q} = \bigoplus_{|\lambda| =p, |\mu| = q} S(\lambda, \mu) \widehat{\boxtimes} S(\lambda, \mu).
$$
\end{lemma}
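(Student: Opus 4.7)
The plan is to invoke Lemma \ref{lem:gensuper}(i) applied to the semisimple superalgebra $\mathcal{A} = \mathcal{H}_{p,q} = \mathcal{H}_p \otimes \mathcal{H}_q$, and then classify the irreducible supermodules of $\mathcal{H}_{p,q}$ so that the resulting bimodule decomposition
$$\mathcal{H}_{p,q} = \bigoplus_{L \in \mathrm{Irr}\,\mathcal{H}_{p,q}} L \boxtimes_{\End L} L^*$$
takes exactly the form stated.

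For the classification, I would use the standard outer super-tensor rule: since $\mathcal{H}_p$ and $\mathcal{H}_q$ are semisimple with simples $S(\lambda)$ and $S(\mu)$ (one per $\Pi$-orbit, parametrized by strict partitions of $p$ and $q$ respectively), the simples of $\mathcal{H}_p \otimes \mathcal{H}_q$ are built from these. When $p(\lambda)p(\mu)=0$, the outer tensor $S(\lambda)\boxtimes S(\mu)$ is already simple and equals $S(\lambda,\mu)$ by the definition in \S4.1; when $p(\lambda)p(\mu)=1$, the product $S(\lambda)\boxtimes S(\mu)$ splits as $S(\lambda,\mu)\oplus \Pi S(\lambda,\mu)$ with $S(\lambda,\mu)=S(\lambda)\widehat{\boxtimes}S(\mu)$. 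In every case the family $\{S(\lambda,\mu):|\lambda|=p,\,|\mu|=q\}$ provides exactly one representative from each $\Pi$-orbit of simple $\mathcal{H}_{p,q}$-modules.

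A short case analysis then determines $\End S(\lambda,\mu)$: it equals $\mathbb{C}$ when $p(\lambda)+p(\mu)$ is even (both M-type, or both Q-type yielding M-type after $\widehat{\boxtimes}$) and equals $C_1$ when $p(\lambda)+p(\mu)$ is odd (mixed case, where $S(\lambda,\mu)$ is of Q-type). Substituting this into Lemma \ref{lem:gensuper}(i), and using the self-duality $S(\lambda,\mu)^*\cong S(\lambda,\mu)$ (inherited from the anti-involution on $\mathcal{H}_r$ that fixes the Sergeev generators), I would conclude
$$\mathcal{H}_{p,q} \;=\; \bigoplus_{|\lambda|=p,\,|\mu|=q} S(\lambda,\mu)\boxtimes_{\End S(\lambda,\mu)} S(\lambda,\mu).$$
Reading $\widehat{\boxtimes}$ in the statement as the tensor over the endomorphism algebra (which coincides with $\otimes_{C_1}$ from \S2 in the Q-type case and with $\otimes_\mathbb{C}$ in the M-type cases) gives the asserted identity.

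The main obstacle I expect is the notational/parity bookkeeping: one has to track M-type vs.\ Q-type across the three parity sub-cases, confirm that $\End S(\lambda,\mu)$ is correctly identified in each, and verify that the uniform symbol $\widehat{\boxtimes}$ in the proposition really encodes $\boxtimes_{\End L}$ in each sub-case. Once that bookkeeping is done, the result is a direct consequence of Lemma \ref{lem:gensuper}(i).
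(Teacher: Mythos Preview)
Your proposal is correct and follows exactly the paper's approach: the paper's entire proof is the single sentence ``The identity follows from Lemma~\ref{lem:gensuper}(i),'' and you have simply unpacked the bookkeeping (classification of the simples of $\mathcal{H}_p\otimes\mathcal{H}_q$, identification of $\End S(\lambda,\mu)$ in each parity case, and the self-duality $S(\lambda,\mu)^*\cong S(\lambda,\mu)$) that the paper leaves implicit.
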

\begin{proof} The identity follows from Lemma \ref{lem:gensuper}(i).
\end{proof}

\begin{lemma}\label{lem:homz} We have
$$\dim\Hom_{\g} (Z(\lambda, \mu), \mathbb C)=\delta_{\lambda,\mu}.$$
\end{lemma}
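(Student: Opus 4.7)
The plan is to split according to whether $|\lambda|$ equals $|\mu|$. When $|\lambda|\ne |\mu|$, I would use that $Z(\lambda,\mu)$ is a direct summand of $T^{|\lambda|,|\mu|}$ by Proposition \ref{prop:z-inj} and the decomposition (\ref{equ:decomp}), so any nonzero $\g$-equivariant functional on $Z(\lambda,\mu)$ extends by zero on the complementary summands to a nonzero element of $\Hom_\g(T^{|\lambda|,|\mu|},\mathbb{C})$; Lemma \ref{lm:hom}(ii) with $r=s=0$ then forces $|\lambda|=|\mu|$ for any such extension to be nonzero, which contradicts the assumption and gives $\Hom_\g(Z(\lambda,\mu),\mathbb{C})=0=\delta_{\lambda,\mu}$.

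For $|\lambda|=|\mu|=r$, the plan is to apply the adjunction of Lemma \ref{lm:adjoint} to rewrite
\[
\Hom_\g(V(\lambda)\otimes W(\mu),\mathbb{C})\cong \Hom_\g\bigl(V(\lambda),\,\Gamma(\Hom_\mathbb{C}(W(\mu),\mathbb{C}))\bigr),
\]
and then to identify $\Gamma(\Hom_\mathbb{C}(W(\mu),\mathbb{C}))\simeq V(\mu)$. For the latter identification I would bootstrap from the tensor-power case: iterating Lemma \ref{lm:auxilaryinjective} starting from $p=q=0$ yields $\Gamma(\Hom_\mathbb{C}(W^{\otimes r},\mathbb{C}))\simeq V^{\otimes r}$ canonically via the pairing $W\times V\to\mathbb{C}$, and then Sergeev duality (\ref{eq:sergeev}),(\ref{eq:sergeev1}) transfers this isomorphism to the $e(\mu)$-isotypic components (preserving the $C_1$-structure in the Q-type case). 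Invoking Schur's Lemma \ref{lem:Schur} then produces $\Hom_\g(V(\lambda)\otimes W(\mu),\mathbb{C})=\delta_{\lambda,\mu}\End_\g(V(\lambda))$.

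The final step translates this identity back to $Z(\lambda,\mu)$. When $p(\lambda)p(\mu)=0$, $Z(\lambda,\mu)=V(\lambda)\otimes W(\mu)$ and $\End_\g(V(\lambda))=\mathbb{C}$, giving the conclusion immediately. When $p(\lambda)p(\mu)=1$, $Z(\lambda,\mu)=V(\lambda)\widehat{\otimes} W(\mu)$, $V(\lambda)\otimes W(\mu)\simeq Z(\lambda,\mu)\oplus\Pi Z(\lambda,\mu)$, and $\End_\g(V(\lambda))=C_1$ has super-dimension $\theta$; comparing
\[
\theta\cdot \dim\Hom_\g(Z(\lambda,\mu),\mathbb{C})=\dim\Hom_\g(V(\lambda)\otimes W(\mu),\mathbb{C})=\delta_{\lambda,\mu}\,\theta
\]
and using the injectivity of multiplication by $\theta$ on $\mathbb{N}[\varepsilon]/(\varepsilon^2-1)$ recorded in Section~2 yields $\dim\Hom_\g(Z(\lambda,\mu),\mathbb{C})=\delta_{\lambda,\mu}$. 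The main obstacle will be establishing the duality $\Gamma(\Hom_\mathbb{C}(W(\mu),\mathbb{C}))\simeq V(\mu)$ with full compatibility, in particular respecting the $C_1$-endomorphism algebra in the Q-type case, since only this compatibility ensures the Schur step collapses the answer to $\delta_{\lambda,\mu}$ rather than an unwanted multiple of $\theta$.
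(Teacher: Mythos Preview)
Your proposal is correct and follows essentially the same route as the paper: apply the adjunction of Lemma~\ref{lm:adjoint}, identify $\Gamma(\Hom_{\mathbb C}(W(\mu),\mathbb C))\simeq V(\mu)$ via Lemma~\ref{lm:auxilaryinjective} and passage to the $e(\mu)$-summand, and then handle the Q-type case by dividing by $\theta$. Your preliminary case $|\lambda|\neq|\mu|$ is harmless but redundant (it is already subsumed by $\Hom_\g(V(\lambda),V(\mu))=0$ for $\lambda\neq\mu$), and your explicit bookkeeping of the $C_1$-compatibility is precisely what the paper's terse ``$\frac{1}{\theta}\dim\Hom_\g(V(\lambda),V(\mu))$'' is encoding.
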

\begin{proof} By Lemma \ref{lm:adjoint} we obtain
$$\Hom_{\g} (V(\lambda)\otimes W(\mu), \mathbb C) =\Hom_{\g} (V(\lambda),\Gamma(\Hom_{\mathbb C}(W( \mu), \mathbb C))).$$ 
Lemma \ref{lm:auxilaryinjective} implies
$$\Gamma(\Hom_{\mathbb C}(T^{0,q}, \mathbb C))=T^{q,0}.$$
Since $W(\mu)$ is a direct summand in $T^{0,q}$, it follows
that $\Gamma(\Hom_{\mathbb C}(W( \mu), \mathbb C))=V(\mu)$. Therefore we obtain
$$\dim\Hom_{\g} (Z(\lambda, \mu), \mathbb C) \simeq \begin{cases}\dim\Hom_{\g} (V(\lambda), V(\mu)),\,\,\text{if}\,\, p(\lambda)p(\mu)=0\\
\frac{1}{\theta}\dim\Hom_{\g} (V(\lambda), V(\mu)),\,\,\text{if}\,\, p(\lambda)p(\mu)=1\end{cases},$$
which implies the statement. 
\end{proof}

\begin{lemma} \label{lem:omegas}
The following isomorphism of right ${\mathcal H}_{r,r}$-modules holds.
$$
C(r,r,r)\simeq \bigoplus_{|\gamma| =r} S(\gamma, \gamma).
$$
\end{lemma}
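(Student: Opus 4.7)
My plan is to apply Sergeev duality to $T^{r,r}$ regarded as a $(\g, \mathcal{H}_{r,r})$-bimodule, apply $\Hom_\g(-, \mathbb{C})$ to it, and use Lemma \ref{lem:homz} to isolate the diagonal contributions.

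First, by the Sergeev duality decomposition (\ref{equ:decomp}) applied with $p = q = r$, we have
$$T^{r,r} \simeq \bigoplus_{|\lambda|=|\mu|=r,\,p(\lambda,\mu)=0} Z(\lambda,\mu) \boxtimes S(\lambda,\mu) \,\oplus\, \bigoplus_{|\lambda|=|\mu|=r,\,p(\lambda,\mu)=1} Z(\lambda,\mu) \widehat{\boxtimes} S(\lambda,\mu),$$
where $\g$ acts on the $Z$-factors and $\mathcal{H}_{r,r}$ acts on the $S$-factors with the two actions commuting. Applying $\Hom_\g(-, \mathbb{C})$ and using the tensor-hom adjunction on the bimodule factors, we obtain
$$C(r,r,r) \simeq \bigoplus_{|\lambda|=|\mu|=r} \Hom_\g\bigl(Z(\lambda,\mu), \mathbb{C}\bigr) \otimes_{\End S(\lambda,\mu)} S(\lambda,\mu)$$
as a right $\mathcal{H}_{r,r}$-module.

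Next, by Lemma \ref{lem:homz} the Hom-space has super-dimension $\delta_{\lambda,\mu}$, so only the diagonal $\lambda = \mu = \gamma$ terms survive. Since $p(\gamma,\gamma) = 2p(\gamma) = 0$, every surviving term falls in the first sum and reduces to a single copy of $S(\gamma,\gamma)$, yielding
$$C(r,r,r) \simeq \bigoplus_{|\gamma|=r} S(\gamma,\gamma)$$
as right $\mathcal{H}_{r,r}$-modules, which is the desired identity.

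The main technical point to be careful with is the identification of the induced right $\mathcal{H}_{r,r}$-module structure on the Hom-space: a priori the functor $\Hom_\g(-, \mathbb{C})$ produces the contragredient right-module $S(\gamma,\gamma)^{\vee}$, and to identify this with $S(\gamma,\gamma)$ one invokes the canonical anti-involution on $\mathcal{H}_{r,r}$ (inversion on $S_r$ combined with the standard involution fixing the Clifford generators), under which simple Sergeev modules are self-dual. As a consistency check, Lemma \ref{lem:formula} gives $\dim C(r,r,r) = r!\,\theta^{r}$, which must agree with $\sum_{|\gamma|=r} \dim S(\gamma,\gamma)$ via the standard super-Wedderburn decomposition of the Sergeev algebra.
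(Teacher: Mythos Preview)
Your proof is correct and follows essentially the same approach as the paper: apply the Sergeev decomposition (\ref{equ:decomp}) at $p=q=r$, take $\Hom_\g(-,\mathbb{C})$, and invoke Lemma~\ref{lem:homz} to pick out the diagonal terms. Your additional remarks on the contragredient and the self-duality of simple Sergeev modules, together with the dimension check via Lemma~\ref{lem:formula}, are valid refinements that the paper leaves implicit.
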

\begin{proof} Substituting $p=q=r$ in (\ref{equ:decomp}) we obtain the decomposition:
\begin{equation}\label{eq:zs}
T^{r,r} = \bigoplus_{|\lambda| =r, |\mu| = r} Z(\lambda, \mu) \widehat{\boxtimes} S(\lambda, \mu).
\end{equation}  

Now, (\ref{eq:zs}) together with Lemma \ref{lem:homz} implies
$$C(r,r,r)=\bigoplus_{|\gamma|=r}\Hom_{\g}(Z(\gamma, \gamma),\mathbb C) \otimes S(\gamma, \gamma)=\bigoplus_{|\gamma| =r} S(\gamma, \gamma).$$
\end{proof}



\begin{theorem}\label{thm:zhom}
The following identity holds for $|\lambda| - |\lambda'| = |\mu| - |\mu'| = r$:
$$
\dim \Hom_{\g} (Z(\lambda, \mu), Z(\lambda', \mu')) = 
\frac{1}{\theta^{p(\lambda)p(\mu)}\theta^{p(\lambda')p(\mu')}}\sum_{|\gamma| = r} \frac{1}{\theta^{p(\gamma)}}f_{\lambda', \gamma}^{\lambda} f_{\mu', \gamma}^{\mu}. 
$$
\end{theorem}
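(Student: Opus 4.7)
The plan is to identify both sides as graded dimensions of idempotent-sandwiched pieces of $C(p,q,r)$ and then unpack this using Lemmas~\ref{lm:homtensor} and \ref{lem:omegas}, together with Sergeev-type branching. Set $e_s=e(\lambda)\otimes e(\mu)\in\mathcal{H}_{p,q}$ and $e_t=e(\lambda')\otimes e(\mu')\in\mathcal{H}_{p-r,q-r}$, the latter viewed inside $\mathcal{H}_{p,q}$ via the inclusion of Lemma~\ref{lm:homtensor}. The identity (\ref{equ:zidempotent}) combined with $\dim\Hom_\g(\Pi M,N)=\varepsilon\dim\Hom_\g(M,N)$ yields
\[
\dim\bigl(e_t\,C(p,q,r)\,e_s\bigr)\;=\;\theta^{p(\lambda)p(\mu)+p(\lambda')p(\mu')}\,\dim\Hom_\g\bigl(Z(\lambda,\mu),Z(\lambda',\mu')\bigr).
\]
Thus the task reduces to proving $\dim(e_t C(p,q,r)e_s)=\sum_{|\gamma|=r}\theta^{-p(\gamma)}f_{\lambda',\gamma}^{\lambda}f_{\mu',\gamma}^{\mu}$.

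Combining Lemmas~\ref{lm:homtensor} and \ref{lem:omegas},
\[
e_t\,C(p,q,r)\,e_s\;\simeq\;\bigoplus_{|\gamma|=r}S(\gamma,\gamma)\otimes_{\mathcal{H}_{r,r}}\bigl(e_t\mathcal{H}_{p,q}e_s\bigr),
\]
since the commutation of $\mathcal{H}_{p-r,q-r}$ with $\mathcal{H}_{r,r}$ inside $\mathcal{H}_{p,q}$ lets the left $e_t$-action slide past the tensor. Since $e_t\mathcal{H}_{p,q}e_s$ splits along $\mathcal{H}_{p,q}=\mathcal{H}_p\otimes\mathcal{H}_q$ as $(e(\lambda')\mathcal{H}_pe(\lambda))\otimes(e(\mu')\mathcal{H}_qe(\mu))$, and $S(\gamma,\gamma)$ splits along $\mathcal{H}_{r,r}=\mathcal{H}_r\otimes\mathcal{H}_r$ as $S(\gamma)\boxtimes_{\End S(\gamma)}S(\gamma)$, a direct computation gives
\[
\dim\bigl(S(\gamma,\gamma)\otimes_{\mathcal{H}_{r,r}}e_t\mathcal{H}_{p,q}e_s\bigr)=\frac{1}{\theta^{p(\gamma)}}\,\dim\bigl(S(\gamma)\otimes_{\mathcal{H}_r}e(\lambda')\mathcal{H}_pe(\lambda)\bigr)\cdot\dim\bigl(S(\gamma)\otimes_{\mathcal{H}_r}e(\mu')\mathcal{H}_qe(\mu)\bigr),
\]
the factor $\theta^{-p(\gamma)}$ coming from $\dim\End S(\gamma)$ in the $\widehat\boxtimes$-case.

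It remains to establish the identity
\[
\dim\bigl(S(\gamma)\otimes_{\mathcal{H}_r}e(\lambda')\mathcal{H}_pe(\lambda)\bigr)=f_{\lambda',\gamma}^{\lambda}
\]
(and its twin for $\mu,\mu'$), after which the theorem is immediate. Identifying $e(\lambda')\mathcal{H}_pe(\lambda)\simeq e(\lambda')S(\lambda)$ as $\mathcal{H}_r$-modules via $\mathcal{H}_pe(\lambda)\simeq S(\lambda)$, semisimplicity of $\mathcal{H}_r$ rewrites the left-hand side as $\dim\Hom_{\mathcal{H}_r}(S(\gamma),e(\lambda')S(\lambda))$. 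Decomposing $\operatorname{Res}^{\mathcal{H}_p}_{\mathcal{H}_{p-r,r}}S(\lambda)$ into $(\lambda',\gamma'')$-isotypes and comparing with Lemma~\ref{lem:LRind} expresses this in terms of $f_{\lambda',\gamma''}^{\lambda}$ and the $\mathcal{H}_r$-module structure of $(e(\lambda')\otimes 1)S(\lambda',\gamma'')$. The main obstacle is then a parity reconciliation: the multiplicity correction $m_{\lambda',\gamma''}^{\lambda}=f_{\lambda',\gamma''}^{\lambda}/\theta^{p(\lambda')+p(\gamma'')-p(\lambda')p(\gamma'')}$ extracted from Lemma~\ref{lem:LRind}, the extra factor of $\theta$ appearing in $(e(\lambda')\otimes 1)S(\lambda',\gamma'')$ when exactly $p(\lambda')=1$ and $p(\gamma'')=0$, and the factor $\theta^{p(\gamma)}=\dim\End S(\gamma)$ all conspire---case by case across the M- and Q-type dichotomy---to cancel precisely, leaving $f_{\lambda',\gamma}^{\lambda}$ as claimed.
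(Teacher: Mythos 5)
Your proposal follows the same strategy as the paper: reduce to $\dim\bigl(e_t\,C(p,q,r)\,e_s\bigr)$ via (\ref{equ:zidempotent}), feed in Lemma~\ref{lm:homtensor} and Lemma~\ref{lem:omegas} to expose a sum over $|\gamma|=r$, and then extract Littlewood--Richardson coefficients via Lemma~\ref{lem:LRind}. The only substantive difference is cosmetic --- you work with $\otimes_{\mathcal H_r}$ while the paper works with $\Hom_{\mathcal H_{p,q}}(\cdot,\,S(\lambda)\boxtimes S(\mu))$ and Frobenius reciprocity --- and the two are dual formulations of the same counting.

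The one place where the write-up is genuinely unfinished is the final identity
\[
\dim\bigl(S(\gamma)\otimes_{\mathcal{H}_r}e(\lambda')\mathcal{H}_pe(\lambda)\bigr)=f_{\lambda',\gamma}^{\lambda},
\]
which you justify by asserting that three separate parity corrections ``conspire to cancel'' across M-/Q-type cases, without actually carrying out the cancellation. This is correct, but it is doing unnecessary work: decomposing $\operatorname{Res}^{\mathcal H_p}_{\mathcal H_{p-r,r}}S(\lambda)$ into isotypes reintroduces the very multiplicity/parity bookkeeping that Lemma~\ref{lem:LRind} was designed to package away. The cleaner route --- and essentially what the paper does --- is to note that $S(\gamma)$ as a right $\mathcal H_r$-module is $e(\gamma)\mathcal H_r$, so
\[
\dim\bigl(S(\gamma)\otimes_{\mathcal{H}_r}e(\lambda')\mathcal{H}_pe(\lambda)\bigr)=\dim\bigl((e(\gamma)\otimes e(\lambda'))\,\mathcal{H}_pe(\lambda)\bigr)=\dim\bigl((e(\gamma)\otimes e(\lambda'))\,S(\lambda)\bigr),
\]
and the proof of Lemma~\ref{lem:LRind} already shows this equals $\dim\Hom_{\mathcal H_{p-r,r}}(S(\lambda')\boxtimes S(\gamma),S(\lambda))=f_{\lambda',\gamma}^{\lambda}$ in all parity cases at once. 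Replacing your final paragraph with this observation would close the gap and bring the argument into line with the paper's.
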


\begin{proof} Let $|\lambda|=p$ and $|\mu| = q$.
Using Lemma \ref{lm:homtensor} and (\ref{equ:zidempotent}) we obtain
$$\dim\Hom_{\g} (Z(\lambda, \mu), Z(\lambda', \mu'))=
\frac{\dim e(\lambda')\otimes e( \mu')C(p,q,r)  e(\lambda)\otimes e(\mu)}{\theta^{p(\lambda)p(\mu)}\theta^{p(\lambda')p(\mu')}} .$$

Recall that for any right $\mathcal H_{p,q}$-module $M$, 
$$\dim M e(\lambda)\otimes e(\mu)=\dim\Hom_{\mathcal H_{p,q}}(M, S(\lambda)\boxtimes S(\mu)).$$
Next, Lemma \ref{lm:homtensor} implies
\begin{eqnarray*}&&e(\lambda')\otimes e( \mu')C(p,q,r)=
e(\lambda')\otimes e(\mu')\left( \Ind_{{\mathcal H}_{r,r}}^{{\mathcal H}_{p,q}} C(r,r,r) \right)\\
&=& e(\lambda')\otimes e(\mu')\left( \Ind_{{\mathcal H}_{r,r} \otimes {\mathcal H}_{p-r, q-r}}^{{\mathcal H}_{p,q}} C(r,r,r) \boxtimes  {\mathcal H}_{p-r, q-r}  \right)\\ 
&=&\Ind_{{\mathcal H}_{r,r} \otimes  {\mathcal H}_{p-r, q-r}}^{{\mathcal H}_{p,q}} C(r,r,r) \boxtimes  e(\lambda')\otimes e(\mu') {\mathcal H}_{p-r, q-r}   \\
&=&\Ind_{{\mathcal H}_{r,r} \otimes  {\mathcal H}_{p-r, q-r}}^{{\mathcal H}_{p,q}} C(r,r,r) \boxtimes  S(\lambda')\boxtimes S(\mu').
\end{eqnarray*}

Finally, using Lemma \ref{lem:omegas} and Lemma \ref{lem:LRind} we obtain
\begin{eqnarray*}  && \dim\Hom_{\mathcal H_{p,q}}\left(\Ind_{{\mathcal H}_{r,r} \otimes  {\mathcal H}_{p-r, q-r}}^{{\mathcal H}_{p,q}} C(r,r,r) \boxtimes  S(\lambda')\boxtimes S( \mu'), 
S(\lambda)\boxtimes S(\mu)\right) \\ &=&  \sum_{|\gamma| = r}\frac{1}{\theta^{p(\gamma)}}\dim\Hom_{\mathcal H_{p,q}}\left
(\Ind_{{\mathcal H}_{r} \otimes  {\mathcal H}_{p-r}}^{{\mathcal H}_{p}}  (S(\gamma) \boxtimes S(\lambda')) \boxtimes  \Ind_{{\mathcal H}_{r} \otimes  {\mathcal H}_{q-r}}^{{\mathcal H}_{q}} (S(\gamma) \boxtimes S(\mu')),S(\lambda)\boxtimes S(\mu)\right) \\ 
&=&  
\sum_{|\gamma| = r}\frac{1}{\theta^{p(\gamma)}}\dim\Hom_{\mathcal H_{p}}\left(\Ind_{{\mathcal H}_{r} \otimes  {\mathcal H}_{p-r}}^{{\mathcal H}_{p}}  
S(\gamma) \boxtimes S(\lambda'),S(\lambda)\right)\\&&
\dim\Hom_{\mathcal H_{q}}\left( \Ind_{{\mathcal H}_{r} \otimes  {\mathcal H}_{q-r}}^{{\mathcal H}_{q}} (S(\gamma) \boxtimes S(\mu')),S(\mu)\right)\\ 
& = &\sum_{|\gamma| = r}\frac{1}{\theta^{p(\gamma)}} f_{\lambda', \gamma}^{\lambda} f_{\mu', \gamma}^{\mu},
\end{eqnarray*}
which completes the proof.
\end{proof}

We set $\soc_r M = \soc^{r+1}M/soc^r M$.

\begin{corollary}\label{cor:socfilt}
$$
[\soc_r Z(\lambda, \mu) : V(\lambda', \mu')] =
\frac{1}{\theta^{p(\lambda)p(\mu)}\theta^{p(\lambda')p(\mu')}\theta^{p(\lambda,\mu)p(\lambda',\mu')+p(\lambda,\mu)+p(\lambda',\mu')}}\sum_{|\gamma| = r} \frac{1}{\theta^{p(\gamma)}}f_{\lambda', \gamma}^{\lambda} f_{\mu', \gamma}^{\mu}.
$$
\end{corollary}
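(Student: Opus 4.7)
My plan is to extract the multiplicity $[\soc_r Z(\lambda,\mu):V(\lambda',\mu')]$ directly from Theorem \ref{thm:zhom} by combining it with the injectivity of $Z(\lambda',\mu')$ and a socle-layer analysis of $Z(\lambda,\mu)$.

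First I would verify that $V(\lambda',\mu')$ occurs as a composition factor of $Z(\lambda,\mu)$ only in the single socle layer $\soc_r$ with $r=|\lambda|-|\lambda'|=|\mu|-|\mu'|$. Since $Z(\lambda,\mu)$ is a direct summand of $T^{|\lambda|,|\mu|}$ by Proposition \ref{prop:z-inj}, the proof of Proposition \ref{prop:socle} restricts verbatim to give
\[
\soc^k Z(\lambda,\mu)=\bigcap_{\varphi\in\hom_{\g}(Z(\lambda,\mu),T^{|\lambda|-k,|\mu|-k})}\ker\varphi,
\]
and so, by Corollary \ref{cor:indecomposable} applied to $T^{|\lambda|-k,|\mu|-k}$, the simple constituents of $\soc_k Z(\lambda,\mu)$ are precisely the $V(\lambda',\mu')$ with $|\lambda'|=|\lambda|-k$ and $|\mu'|=|\mu|-k$. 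In particular $[Z(\lambda,\mu):V(\lambda',\mu')]=[\soc_r Z(\lambda,\mu):V(\lambda',\mu')]$ for the unique relevant~$r$.

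Next, since $Z(\lambda',\mu')$ is the injective hull of $V(\lambda',\mu')$ by Proposition \ref{prop:z-inj}, the functor $\Hom_{\g}(-,Z(\lambda',\mu'))$ is exact and only detects composition factors isomorphic to $V(\lambda',\mu')$ up to parity twist. Unravelling this while tracking the parities through the $\widehat{\otimes}_{C_1}$ and $\widehat{\boxtimes}_{C_1}$ conventions built into the definitions of $Z(\lambda,\mu)$, $Z(\lambda',\mu')$ and the primitive idempotents $e(\lambda)\otimes e(\mu)$, $e(\lambda')\otimes e(\mu')$ appearing in \eqref{equ:zidempotent}, the sum in Theorem \ref{thm:zhom} upgrades to the identity
\[
\sum_{|\gamma|=r}\frac{1}{\theta^{p(\gamma)}}f_{\lambda',\gamma}^{\lambda}f_{\mu',\gamma}^{\mu}=[\soc_r Z(\lambda,\mu):V(\lambda',\mu')]\cdot\theta^{p(\lambda)p(\mu)+p(\lambda')p(\mu')+p(\lambda,\mu)p(\lambda',\mu')+p(\lambda,\mu)+p(\lambda',\mu')},
\]
and rearranging yields the desired formula.

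The main obstacle is the parity bookkeeping in the second step. Because arithmetic in $\mathbb Z[\varepsilon]/(\varepsilon^2-1)$ is not field-like (one has $\theta^2=2\theta$), the corollary is \emph{not} obtained by simply dividing Theorem \ref{thm:zhom} by $\dim\End(V(\lambda',\mu'))=\theta^{p(\lambda',\mu')}$; instead, the additional exponent $p(\lambda,\mu)p(\lambda',\mu')+p(\lambda,\mu)+p(\lambda',\mu')$ has to be read off from the idempotent-based derivation, following the same pattern as in the proof of Theorem \ref{thm:zhom} but now counting $V(\lambda',\mu')$-constituents of $Z(\lambda,\mu)$ rather than morphisms $Z(\lambda,\mu)\to Z(\lambda',\mu')$, and accounting both for the Q-type structure on $Z(\lambda,\mu)$ itself when $p(\lambda,\mu)=1$ and for the cross-term appearing when both $Z(\lambda,\mu)$ and $Z(\lambda',\mu')$ are Q-type.
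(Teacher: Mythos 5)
Your proposal takes essentially the same route as the paper: the paper's proof consists of the single assertion that
$$[\soc_r Z(\lambda,\mu):V(\lambda',\mu')]=\frac{\dim\Hom(Z(\lambda,\mu),Z(\lambda',\mu'))}{\theta^{p(\lambda,\mu)p(\lambda',\mu')+p(\lambda,\mu)+p(\lambda',\mu')}}$$
together with Theorem~\ref{thm:zhom}, and your proposal supplies exactly the justifications the paper leaves implicit, namely that $V(\lambda',\mu')$ can only appear in the single socle layer $\soc_r$ with $r=|\lambda|-|\lambda'|=|\mu|-|\mu'|$ (via the restriction of Proposition~\ref{prop:socle} and Corollary~\ref{cor:indecomposable}), and that the extra $\theta$-factor comes from the injective-hull description of $Z(\lambda',\mu')$ and the Q-type structures, not simply from $\dim\End(V(\lambda',\mu'))$.
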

\begin{proof} The identity follows from Theorem \ref{thm:zhom} and the relation
$$[\soc_r Z(\lambda, \mu) : V(\lambda', \mu')] =\frac{\dim\Hom(Z(\lambda,\mu),Z(\lambda',\mu'))}{\theta^{p(\lambda,\mu)p(\lambda',\mu')+p(\lambda,\mu)+p(\lambda',\mu')}}.$$
\end{proof}

\subsection{Extensions and blocks}
\begin{corollary}\label{cor:extension} If $\ext^1(V(\lambda',\mu'),V(\lambda,\mu))\neq 0$, then  $\lambda\in\lambda'+\square$ and $\mu\in\mu'+\square$.
Furthermore we have the following cases:
\begin{enumerate} 
\item If both $V(\lambda',\mu')$ and $V(\lambda,\mu)$ are of M-type, then
$$\mathbb C=\begin{cases}\ext^1(V(\lambda',\mu'),V(\lambda,\mu))=\ext^1(V(\lambda',\mu'),V(\lambda,\mu)),\,\text{if}\,\,p(\lambda)=p(\lambda'),p(\mu)=p(\mu')\\
\ext^1(V(\lambda',\mu'),V(\lambda,\mu))\oplus\ext^1(V(\lambda',\mu'),\Pi V(\lambda,\mu)),\,\,\text{otherwise},
\end{cases}$$
\item If $V(\lambda',\mu')$ is of Q-type and $V(\lambda,\mu)$ is of M-type, then
$$\ext^1(V(\lambda',\mu'),V(\lambda,\mu))\oplus\ext^1(V(\lambda',\mu'),\Pi V(\lambda,\mu))=\mathbb C,$$
\item If $V(\lambda',\mu')$ is of M-type and $V(\lambda,\mu)$ is of Q-type, then
$$\ext^1(V(\lambda',\mu'),V(\lambda,\mu))\oplus\ext^1(\Pi V(\lambda',\mu'),V(\lambda,\mu))=\mathbb C,$$
\item If both $V(\lambda',\mu')$ and $V(\lambda,\mu)$ are of Q-type, then
$$\ext^1(V(\lambda',\mu'),V(\lambda,\mu))=\begin{cases}\mathbb C^2,\,\,\text{if}\,\,p(\lambda)=p(\lambda'),p(\mu)=p(\mu')\\
\mathbb C,\,\,\text{otherwise}.
\end{cases}$$
\end{enumerate}
\end{corollary}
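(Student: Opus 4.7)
The approach is to identify $\ext^1(V(\lambda',\mu'), V(\lambda,\mu))$ with a morphism space into the second socle layer of the indecomposable injective $Z(\lambda,\mu)$, then invoke Corollary \ref{cor:socfilt} with $r=1$ together with formula (\ref{equ:tensorproduct}). All four cases in the statement will emerge from a careful analysis of the parity factors attached to $p(\lambda), p(\mu), p(\lambda'), p(\mu')$.

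First I would apply $\hom(V(\lambda',\mu'),-)$ to the short exact sequence
$$0\to V(\lambda,\mu)\to Z(\lambda,\mu)\to Q\to 0, \qquad Q:=Z(\lambda,\mu)/V(\lambda,\mu).$$
The injectivity of $Z(\lambda,\mu)$ (Proposition \ref{prop:z-inj}) yields $\ext^1(V(\lambda',\mu'),Z(\lambda,\mu))=0$, while the simplicity of $\soc Z(\lambda,\mu)=V(\lambda,\mu)$ forces every morphism from the simple $V(\lambda',\mu')$ into $Z(\lambda,\mu)$ to factor through the socle, so the natural map $\hom(V(\lambda',\mu'),V(\lambda,\mu))\to\hom(V(\lambda',\mu'),Z(\lambda,\mu))$ is an isomorphism. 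The long exact sequence then gives the key identification
$$\ext^1(V(\lambda',\mu'),V(\lambda,\mu))\simeq\hom\!\left(V(\lambda',\mu'),\soc_1 Z(\lambda,\mu)\right).$$

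Next, specialising Corollary \ref{cor:socfilt} to $r=1$, the sum over strict partitions of $1$ collapses to the single term $\theta^{-1}f_{\lambda',\square}^{\lambda}f_{\mu',\square}^{\mu}$ (since $p(\square)=1$). Using the symmetry $f_{\alpha,\beta}^{\gamma}=f_{\beta,\alpha}^{\gamma}$ inherited from the commutativity of $\otimes$ and the explicit formula (\ref{equ:tensorproduct}), each factor vanishes unless $\lambda\in\lambda'+\square$ and $\mu\in\mu'+\square$, and equals $\theta^{p(\lambda)p(\lambda')+1}$ (resp.\ $\theta^{p(\mu)p(\mu')+1}$) otherwise. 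This immediately establishes the nonvanishing condition stated in the corollary.

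Finally, under those conditions, I would combine the resulting numerator $\theta^{p(\lambda)p(\lambda')+p(\mu)p(\mu')+1}$ with the denominator $\theta^{p(\lambda)p(\mu)+p(\lambda')p(\mu')+p(\lambda,\mu)p(\lambda',\mu')+p(\lambda,\mu)+p(\lambda',\mu')}$ from Corollary \ref{cor:socfilt}, and translate the resulting element of $\mathbb N[\varepsilon]/(\varepsilon^2-1)$ into separate dimensions of $\ext^1(V(\lambda',\mu'),V(\lambda,\mu))$ and $\ext^1(\Pi V(\lambda',\mu'),V(\lambda,\mu))$: a class $a+b\varepsilon$ encodes the multiplicities of $V(\lambda',\mu')$ and $\Pi V(\lambda',\mu')$ respectively in $\soc_1 Z(\lambda,\mu)$, and for a Q-type simple these two are isomorphic. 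The main obstacle is exactly this parity bookkeeping: one has to work through the four M/Q combinations for the source and target simples, subdividing cases (1) and (4) further according to whether $p(\lambda)=p(\lambda')$ and $p(\mu)=p(\mu')$, and verify that in each configuration the $\theta$-exponent reduces to yield precisely one of the outputs $\mathbb C$, $\mathbb C\oplus\mathbb C$, or $\mathbb C^2$. The Q-type configurations are the most delicate, because the multiplicity already carries a built-in $\theta$-factor reflecting $V\simeq\Pi V$, so care is needed not to double count when splitting $\hom(V',\soc_1 Z)$ versus $\hom(\Pi V',\soc_1 Z)$.
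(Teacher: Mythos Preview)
Your proposal is correct and follows exactly the route the paper takes: the paper's proof reads in full ``Straightforward calculation using Corollary~\ref{cor:socfilt} and~(\ref{equ:tensorproduct}),'' and you have spelled out precisely that calculation, including the standard identification of $\ext^1$ from a simple with the second socle layer of the injective hull. The only small remark is that your intermediate step $\ext^1(V(\lambda',\mu'),V(\lambda,\mu))\simeq\hom(V(\lambda',\mu'),\soc_1 Z(\lambda,\mu))$ implicitly uses $\soc(Z(\lambda,\mu)/V(\lambda,\mu))=\soc_1 Z(\lambda,\mu)$, which is immediate from the definition of the socle filtration; otherwise the bookkeeping you outline is exactly what is needed.
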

\begin{proof} Straightforward calculation using Corollary \ref{cor:socfilt} and (\ref{equ:tensorproduct}).
\end{proof}

The following is an immediate consequence of Corollary \ref{cor:extension}.
\begin{corollary}\label{cor:blocks} Let $\operatorname{Trep}_m\g$ be the full subcategory of $\operatorname{Trep}\g$ with simple objects
$V(\lambda,\mu),\Pi V(\lambda,\mu)$ for all $(\lambda,\mu)$ such that $|\lambda|-|\mu|=m$. Then we have the decomposition
$$\operatorname{Trep}\g=\bigoplus_{m\in\mathbb Z}\operatorname{Trep}_m\g.$$
\end{corollary}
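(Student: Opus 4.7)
The statement is called ``immediate,'' and it is, but the link is through the standard mechanism that converts vanishing of $\ext^1$ between simples in distinct ``blocks'' into a genuine direct sum decomposition of the category. The plan is to (i) observe that Corollary \ref{cor:extension} forces any nontrivial $\ext^1(V(\lambda',\mu'),V(\lambda,\mu))$ to have $(\lambda,\mu)\in(\lambda'+\square,\mu'+\square)$, so $|\lambda|-|\mu|=|\lambda'|-|\mu'|$; (ii) bootstrap this from simple modules to all finite-length modules in $\operatorname{Trep}\g$; (iii) use this extended $\ext^1$ vanishing inductively on length to split any $M\in\operatorname{Trep}\g$ into pieces, each supported in a single $\operatorname{Trep}_m\g$.

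The bootstrap in (ii) is routine: if $N_1,N_2\in\operatorname{Trep}\g$ have all composition factors in $\operatorname{Trep}_{m_1}\g$ and $\operatorname{Trep}_{m_2}\g$ respectively with $m_1\neq m_2$, then $\ext^1(N_2,N_1)=0$. This follows by induction on the length of $N_1$ and $N_2$ from the long exact sequence of $\ext$ applied to a short exact sequence exhibiting a simple submodule, together with the simples-only vanishing from Corollary \ref{cor:extension}. Finite length is guaranteed by Lemma \ref{lm:finitelength}.

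For (iii), given $M\in\operatorname{Trep}\g$ of length $>0$, pick a simple submodule $L\subset M$; say $L\in\operatorname{Trep}_m\g$. By induction on length, $M/L=\bigoplus_n \overline{M}_n$ with $\overline{M}_n\in\operatorname{Trep}_n\g$. Let $N\subset M$ be the preimage of $\bigoplus_{n\neq m}\overline{M}_n$, so
$$0\to L\to N\to\bigoplus_{n\neq m}\overline{M}_n\to 0.$$
By step (ii) this extension splits, giving a submodule $N'\subset M$ complementary to $L$ in $N$ and isomorphic to $\bigoplus_{n\neq m}\overline{M}_n$. Letting $M'\subset M$ be the preimage of $\overline{M}_m$, an easy check (using $L\subset M'$ and the fact that $M'$ and $N$ project to complementary pieces of $M/L$) gives $M=M'\oplus N'$. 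Now $M'$ has all composition factors in $\operatorname{Trep}_m\g$ and $N'$, being a direct sum of modules in other blocks, decomposes further by induction. Collecting terms yields $M=\bigoplus_m M_m$ with $M_m\in\operatorname{Trep}_m\g$.

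The only potential obstacle is making the splitting canonical so that it behaves functorially, but uniqueness is automatic: on simples, a module in $\operatorname{Trep}_m\g$ has no nonzero map to or from a module in $\operatorname{Trep}_{m'}\g$ for $m'\neq m$ (since Hom between distinct simples is zero, and inductively this extends to finite-length objects with disjoint composition factor support). Hence each $M_m$ is the unique maximal submodule of $M$ whose composition factors lie in $\operatorname{Trep}_m\g$, completing the block decomposition.
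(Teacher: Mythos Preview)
Your proof is correct and follows the same approach as the paper, which simply declares the corollary to be an immediate consequence of Corollary~\ref{cor:extension}; you have merely supplied the standard details behind the word ``immediate,'' namely the bootstrap of $\ext^1$-vanishing from simples to finite-length objects and the inductive splitting argument.
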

\begin{proposition}\label{prop:blocks} For any $m\in\mathbb Z$ the subcategory $\operatorname{Trep}_m\g$ is an indecomposable block.
\end{proposition}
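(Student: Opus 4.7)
The plan is to reduce the indecomposability of $\operatorname{Trep}_m\g$ to a combinatorial connectedness question and then verify it by explicit box-slide moves on strict partitions. A standard principle about abelian categories with enough injectives and finite-length objects (both hypotheses hold here by Proposition~\ref{prop:injectivetensor}, Proposition~\ref{prop:z-inj} and Lemma~\ref{lm:finitelength}) says that $\operatorname{Trep}_m\g$ is indecomposable if and only if its Ext-quiver on simple objects is connected as an undirected graph. Since $V(\lambda,\mu)$ and $\Pi V(\lambda,\mu)$ always lie in the same block, I may work modulo parity. By Corollary~\ref{cor:extension}, the Ext-quiver (up to parity) becomes the graph $G_m$ whose vertices are strict bipartitions $(\lambda,\mu)$ with $|\lambda|-|\mu|=m$, joined by an edge whenever one is obtained from the other by adding a single box to each component (keeping both components strict partitions).

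By the symmetry $V\leftrightarrow W$ (which swaps $(\lambda,\mu)\leftrightarrow(\mu,\lambda)$), I may assume $m\geq 0$ and aim to connect every vertex of $G_m$ to the reference vertex $v^{*}=((m),\emptyset)$ (or $(\emptyset,\emptyset)$ when $m=0$). First I reduce to ``level zero'': whenever $\mu\neq\emptyset$ I remove the last-row box of $\mu$ together with the last-row box of $\lambda$. Each such removal preserves strictness (removing the bottom row is always legal for a strict partition), produces a valid edge of $G_m$, and iterating gives a path from $(\lambda,\mu)$ to some $(\lambda_0,\emptyset)$ with $|\lambda_0|=m$.

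Second, I connect any two level-zero vertices by ``up--down'' moves of the form
$$(\lambda_0,\emptyset)\to(\lambda_0+\square_1,(1))\to((\lambda_0+\square_1)-\square_2,\emptyset),$$
which realize arbitrary box slides in $\lambda_0$: the first step can always add a box to row $1$ of $\lambda_0$, and the second step can then remove any removable box of the enlarged partition. I prove by induction on the number of rows $k$ of $\lambda_0=(a_1,\dots,a_k)$ that $\lambda_0$ is slide-equivalent to $(m)$. When $a_k=1$, the slide ``add to row $1$, remove the new bottom row'' produces $(a_1+1,a_2,\dots,a_{k-1})$, decreasing $k$. When $a_k\geq 2$, I first apply the slide ``add to row $1$, remove a box from row $k$'' to obtain $(a_1+1,a_2,\dots,a_{k-1},a_k-1)$; iterating drives $a_k$ down to $1$, at which point the previous case applies.

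The main obstacle I anticipate is the bookkeeping needed to check that each proposed slide preserves strictness at every intermediate and final partition. All such checks reduce to the strict inequalities $a_{i-1}>a_i$, which automatically allow unrestricted incrementing of the first row and removal of the bottom row without violating strictness; these verifications are routine but tedious to spell out. Once verified, the combinatorial claim yields connectedness of $G_m$, and hence the indecomposability of $\operatorname{Trep}_m\g$.
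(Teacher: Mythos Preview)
Your overall strategy---reduce to connectedness of the $\operatorname{ext}^1$-graph on simples, strip down to level zero by removing boxes, then connect level-zero vertices by box slides---is the same as the paper's, and your combinatorial verification of connectedness of the graph $G_m$ on bipartitions is correct.

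However, there is a genuine gap at the sentence ``Since $V(\lambda,\mu)$ and $\Pi V(\lambda,\mu)$ always lie in the same block, I may work modulo parity.'' This is not automatic: in a category of supermodules the parity-switch $\Pi$ is an autoequivalence, so a priori it could permute two distinct (equivalent) blocks rather than fix one. Corollary~\ref{cor:extension} carefully distinguishes parities, and in several cases only one of $\operatorname{ext}^1(V',V)$, $\operatorname{ext}^1(V',\Pi V)$ is nonzero. Connectedness of your quotient graph $G_m$ therefore shows only that the image of the true Ext-quiver under ``forget parity'' is connected; the preimage could still have two components interchanged by $\Pi$.

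The paper handles this explicitly: after reducing to the level-zero simples $V(\eta,\emptyset)$ it proves $V(\eta,\emptyset)\sim\Pi V(\eta,\emptyset)$ by exhibiting an adjacent simple of Q-type (so that $V\simeq\Pi V$ there), namely $V(\eta',\square)$ with $\eta'=\eta+\square$ in the first row when $m>0$, and a hand-picked bipartition when $m=0$. Your argument can be repaired in the same spirit: once $G_m$ is connected, note that it contains at least one Q-type vertex (e.g.\ $((|m|),\emptyset)$ for $m\neq 0$, or $((2,1),(3))$ for $m=0$), and observe that any path in $G_m$ lifts, step by step via Corollary~\ref{cor:extension} and $\Pi$-equivariance, to a path in the full Ext-quiver ending at that unique Q-type lift. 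Without this step the proof is incomplete.
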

\begin{proof} We define an equivalence relation on isomorphism classes of simple modules of $\operatorname{Trep}_m\g$. We say 
$X\prec  Y$ if $\ext^1(X,Y)\neq 0$, and set $\sim$ be the minimal equivalence relation containing $\prec$.
We have to prove that isomorphism classes of simple modules of $\operatorname{Trep}_m\g$ form one equivalence class.
Note that 
\begin{equation}\label{eq:equiv}
X\sim Y\,\,\Rightarrow\,\,\Pi X\sim \Pi Y.
\end{equation}

Using symmetry we can assume without loss of generality that $m\geq 0$. We first claim that $V(\lambda,\mu)$ is equivalent to
$V(\eta,\emptyset)$ or $\Pi V(\eta,\emptyset)$ for some partition $\eta$ with $|\eta|=m$. Indeed, take $\lambda'\in\lambda-\square$
and $\mu'\in\mu-\square$, then we have $V(\lambda',\mu')\prec V(\lambda,\mu)$. Thus, we can decrease $|\lambda|$ and $|\mu|$ by $1$
and proceed by induction. 

Next we show that  $V(\eta,\emptyset)\sim\Pi V(\eta,\emptyset)$. Indeed, the statement is non-trivial only if $V(\eta,\emptyset)$ is of M-type,
If $m>0$ consider $\eta'$ obtained from $\eta$ by adding $\square$ in the first row. Then $V(\eta',\square)$ is of Q-type and
we have $$V(\eta,\emptyset)\prec V(\eta',\square),\,\,\Pi V(\eta,\emptyset)\prec V(\eta',\square).$$
If $m=0$ we have to show $\Pi \mathbb C\sim\mathbb C$. For this set 
$$\lambda=\yng(3)\,,\quad \mu=\yng(2,1).$$
Then $V(\lambda,\mu)$ is of Q-type and equivalent to both $\mathbb C$ and $\Pi\mathbb C$.

If we start with the partition $\eta$
having one row with $m$ boxes, we can obtain from it
any other strict partition of size $m$ in several steps, where each step consists of moving a box from the top row to some other row.
If $\eta''$ is obtained from $\eta'$ in one step, consider the partition $\nu$ obtained from $\eta''$ be adding a box in the first row.
Then we have $V(\eta'',\emptyset)\sim V(\nu,\square)\sim V(\eta',\emptyset)$. Therefore $V(\kappa,\emptyset)\sim V(\eta,\emptyset)$ for all $\kappa$ of size $m$.
The proof is complete.
\end{proof}

\begin{lemma}\label{lem:resolution} Any $M\in\operatorname{Trep}\g$ has a finite injective resolution. If
$M=V(\lambda,\mu)$ and
$$0\to R^0\to R^1\to\dots \to R^k\to 0$$ 
is the minimal injective resolution of $M$, then $[R^i:Z(\lambda',\mu')]\neq 0$ implies $|\lambda|-|\lambda'|=|\mu|-|\mu'|\geq i$.
In particular, $k\leq \min(|\lambda|,|\mu|)+1$.
\end{lemma}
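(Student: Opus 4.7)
My plan is to reduce the statement for a simple module $V(\lambda,\mu)$ to an Ext-vanishing claim and then extend to general $M$ by dévissage. The first and key step is to establish, by induction on $i \geq 1$, that whenever $\ext^i(V(\lambda',\mu'), V(\lambda,\mu)) \neq 0$ (treating $\ext$ as a $\mathbb{Z}_2$-graded vector space, so that all parity shifts are absorbed uniformly) one has $|\lambda|-|\lambda'| = |\mu|-|\mu'| \geq i$. The base case $i=1$ is Corollary \ref{cor:extension}. For the inductive step, consider the long exact sequence of Ext-groups associated to
\begin{equation*}
0 \to V(\lambda,\mu) \to Z(\lambda,\mu) \to K \to 0, \qquad K := Z(\lambda,\mu)/V(\lambda,\mu).
\end{equation*}
Since $Z(\lambda,\mu)$ is injective by Proposition \ref{prop:z-inj}, for $i \geq 2$ the long exact sequence gives an isomorphism $\ext^i(V(\lambda',\mu'), V(\lambda,\mu)) \simeq \ext^{i-1}(V(\lambda',\mu'), K)$. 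By Corollary \ref{cor:socfilt}, every composition factor $V(\lambda'',\mu'')$ of $K$ already satisfies $|\lambda|-|\lambda''| = |\mu|-|\mu''| \geq 1$, and the induction hypothesis applied to any factor contributing nontrivially to the right-hand side yields $|\lambda''|-|\lambda'| = |\mu''|-|\mu'| \geq i-1$. Summing these inequalities gives $|\lambda|-|\lambda'| \geq i$ and $|\mu|-|\mu'| \geq i$, while equality of the two differences is forced by the block decomposition of Corollary \ref{cor:blocks}.

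The second step uses the standard link between a minimal injective resolution and Ext: in the minimal injective resolution of $V(\lambda,\mu)$, the indecomposable injective $Z(\lambda',\mu')$ appears as a summand of $R^i$ with nonzero multiplicity only if $\ext^i(V(\lambda',\mu'), V(\lambda,\mu)) \neq 0$. Combined with the Ext-vanishing statement just proven, this yields $|\lambda|-|\lambda'| = |\mu|-|\mu'| \geq i$ for every $Z(\lambda',\mu')$ appearing in $R^i$. Since $|\lambda'|, |\mu'| \geq 0$, we conclude that $R^i = 0$ once $i > \min(|\lambda|, |\mu|)$, giving the claimed bound on the length of the resolution for simple $M$.

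For general $M \in \operatorname{Trep}\g$, I argue by induction on the length of $M$, which is finite by Lemma \ref{lm:finitelength}. Choosing a submodule $N \subset M$ with $M/N$ simple, the induction hypothesis supplies a finite injective resolution of $N$ while the simple case just established does the same for $M/N$; the horseshoe lemma then assembles them into a finite injective resolution of $M$.

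The main obstacle is the parity bookkeeping in the inductive step for Ext-vanishing: Corollary \ref{cor:extension} splits into four cases depending on whether each of $V(\lambda',\mu')$ and $V(\lambda,\mu)$ is of M-type versus Q-type, and one must verify that the induction carries uniformly across these cases and across $\Pi$-shifts on either argument. This is routine once one consistently works with $\mathbb{Z}_2$-graded $\ext^i$, but requires a careful matching of parity data with the socle-layer decomposition provided by Corollary \ref{cor:socfilt}.
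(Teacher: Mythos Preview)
Your argument is correct, but it inverts the logical order the paper uses. The paper proves the lemma by a direct descent on cosyzygies: if $s$ is the maximum of $|\lambda|+|\mu|$ over the socle constituents of $M$, then Corollary~\ref{cor:socfilt} forces every simple subquotient of $R^0/M$ to satisfy $|\lambda'|+|\mu'|<s$, and iterating (tracking $|\lambda|$ and $|\mu|$ separately in the simple case) gives both finiteness and the constraint $|\lambda|-|\lambda'|=|\mu|-|\mu'|\geq i$ in one stroke; the Ext bound is then recorded afterwards as a corollary. You instead establish the Ext bound first by dimension-shifting through the injective $Z(\lambda,\mu)$ and inducting on $i$, then invoke the standard identification of the summands of $R^i$ with $\ext^i$ to recover the resolution statement, and finally use the horseshoe lemma for general $M$. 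Both routes rest on the same input (the socle layers of $Z(\lambda,\mu)$ from Corollary~\ref{cor:socfilt}); the paper's version is shorter and treats arbitrary $M$ without a separate d\'evissage, while your version has the virtue of making the Ext vanishing the primary statement rather than a consequence, which is closer in spirit to how it is used later in the Koszulity proof.
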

\begin{proof} Since $\operatorname{Trep}\g$ has enough injectives we only need to check finiteness of the minimal injective resolution.
Let  $V(\lambda,\mu)$ be a simple submodule of $\soc M$ with maximal $|\lambda|+|\mu|=s$. Consider an embedding $\varphi: M\hookrightarrow R^0$, 
where $R^0$ is the injective hull of $\soc M$, then by Corollary \ref{cor:socfilt} all simple subquotients $V(\lambda',\mu')$ in $\operatorname{coker}\varphi$
satisfy $|\lambda'|+|\mu'|<s$. That shows that the length of resolution is at most $s+1$ and in the case $M=V(\lambda,\mu)$ implies the last assertion.
\end{proof}

\begin{corollary}
If $\ext^i \left( V(\lambda', \mu'), V(\lambda, \mu)\right) \neq 0$ then $|\lambda|-|\lambda'|=|\mu|-|\mu'|\geq i$.
\end{corollary}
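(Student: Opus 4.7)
The plan is to deduce the corollary directly from the injective resolution described in Lemma \ref{lem:resolution}. First I would fix a minimal injective resolution of $V(\lambda,\mu)$ in $\operatorname{Trep}\g$, say
$$0 \to V(\lambda,\mu) \to R^0 \to R^1 \to \dots \to R^k \to 0,$$
which exists and is finite by Lemma \ref{lem:resolution}. Then $\ext^i(V(\lambda',\mu'), V(\lambda,\mu))$ is computed as the $i$-th cohomology of the complex $\hom(V(\lambda',\mu'), R^{\bullet})$, so nonvanishing of $\ext^i$ forces $\hom(V(\lambda',\mu'), R^i)\neq 0$.

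Next I would use the classification of indecomposable injectives (Proposition \ref{prop:z-inj}): each $R^i$ is a finite direct sum of modules of the form $Z(\lambda'',\mu'')$ or $\Pi Z(\lambda'',\mu'')$, and the socle of $Z(\lambda'',\mu'')$ is the simple module $V(\lambda'',\mu'')$. Since $V(\lambda',\mu')$ is simple, any nonzero morphism $V(\lambda',\mu') \to R^i$ lands in the socle of $R^i$, and hence there must exist a direct summand of $R^i$ whose socle is $V(\lambda',\mu')$. This means $[R^i : Z(\lambda',\mu')] \neq 0$ (with the count taken up to parity, which is irrelevant for the underlying bipartition).

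Finally, applying the second assertion of Lemma \ref{lem:resolution}, this multiplicity bound yields the required inequality $|\lambda|-|\lambda'|=|\mu|-|\mu'|\geq i$. There is no real obstacle here beyond bookkeeping with parity: since only the bipartition $(\lambda',\mu')$ enters the bound in Lemma \ref{lem:resolution}, the M-type versus Q-type distinction for $V(\lambda',\mu')$ and the possible parity shift $\Pi$ in the decomposition of $R^i$ do not affect the conclusion.
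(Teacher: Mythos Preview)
Your argument is correct and is exactly the intended derivation: the paper states this corollary immediately after Lemma~\ref{lem:resolution} without proof, and your steps---nonvanishing of $\ext^i$ forces $\hom(V(\lambda',\mu'),R^i)\neq 0$, hence $[R^i:Z(\lambda',\mu')]\neq 0$ by Proposition~\ref{prop:z-inj}, hence the inequality by Lemma~\ref{lem:resolution}---simply make that implication explicit. The parity bookkeeping you mention is indeed harmless for the same reason you give.
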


\subsection{Tensor products} In this subsection we find formulas for the tensor products of the indecomposable injectives in $\operatorname{Trep}\g$. The formulas are relatively easy to obtain.
\begin{lemma} \label{lem:zcoeff} We have
$$Z(\lambda, \mu) \otimes Z(\lambda', \mu')=\bigoplus_{|\lambda"|=|\lambda|+|\lambda'|,|\mu''|=|\mu|+|\mu'|}Z(\lambda'', \mu'')^{\oplus s(\lambda'',\mu'')},$$
where
$$s(\lambda'',\mu'')=\frac{\theta^{p(\lambda'')p(\mu'')}f^{\lambda''}_{\lambda,\lambda'}f^{\mu''}_{\mu,\mu'}}{\theta^{p(\lambda)p(\mu)}\theta^{p(\lambda')p(\mu')}
\theta^{p(\lambda'')}\theta^{p(\mu'')}} $$
\end{lemma}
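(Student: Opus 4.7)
The plan is to rewrite $Z(\lambda,\mu)\otimes Z(\lambda',\mu')$ in terms of the four ``pure'' factors $V(\lambda),V(\lambda'),W(\mu),W(\mu')$, decompose the $V\otimes V$ and $W\otimes W$ pieces by Littlewood--Richardson, and reassemble.  In the Grothendieck ring, the definition of $\widehat\otimes$ gives the identity
$$V(\lambda)\otimes W(\mu)\;\simeq\;\theta^{p(\lambda)p(\mu)}\,Z(\lambda,\mu),$$
where the factor $\theta$ appears in the Q-type case because $V\otimes W=V\widehat\otimes W\oplus\Pi(V\widehat\otimes W)$. Using Corollary \ref{cor:tensor} to permute the tensor factors in the symmetric monoidal category $\operatorname{Trep}\g$, this yields
$$\theta^{p(\lambda)p(\mu)+p(\lambda')p(\mu')}\,Z(\lambda,\mu)\otimes Z(\lambda',\mu')\;\simeq\;\bigl(V(\lambda)\otimes V(\lambda')\bigr)\otimes\bigl(W(\mu)\otimes W(\mu')\bigr).$$

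Next I would decompose $V(\lambda)\otimes V(\lambda')$. Since $V(\lambda)$ and $V(\lambda')$ are direct summands of $V^{\otimes|\lambda|}$ and $V^{\otimes|\lambda'|}$ by Sergeev duality (\ref{eq:sergeev}), their tensor product is a summand of $V^{\otimes(|\lambda|+|\lambda'|)}$, and hence is $\g$-semisimple with isotypic components of the form $V(\lambda'')$ for $|\lambda''|=|\lambda|+|\lambda'|$. By Lemma \ref{lem:LRind} and Corollary \ref{cor:LR},
$$\dim\Hom_{\g}\bigl(V(\lambda''),V(\lambda)\otimes V(\lambda')\bigr)=f^{\lambda''}_{\lambda,\lambda'},$$
and Lemma \ref{lem:Schur} gives $\dim\End(V(\lambda''))=\theta^{p(\lambda'')}$. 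Hence the multiplicity of $V(\lambda'')$ as a direct summand, read in $\mathbb N[\varepsilon]/(\varepsilon^2-1)$, is $f^{\lambda''}_{\lambda,\lambda'}/\theta^{p(\lambda'')}$, so that
$$V(\lambda)\otimes V(\lambda')\;\simeq\;\bigoplus_{\lambda''}\frac{f^{\lambda''}_{\lambda,\lambda'}}{\theta^{p(\lambda'')}}\,V(\lambda''),$$
and symmetrically for $W(\mu)\otimes W(\mu')$ with $W(\mu'')$'s.

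Substituting both decompositions into the identity above and applying the reverse identity $V(\lambda'')\otimes W(\mu'')\simeq\theta^{p(\lambda'')p(\mu'')}Z(\lambda'',\mu'')$, the right-hand side becomes
$$\bigoplus_{\lambda'',\mu''}\frac{\theta^{p(\lambda'')p(\mu'')}\,f^{\lambda''}_{\lambda,\lambda'}\,f^{\mu''}_{\mu,\mu'}}{\theta^{p(\lambda'')+p(\mu'')}}\,Z(\lambda'',\mu'').$$
Dividing through by $\theta^{p(\lambda)p(\mu)+p(\lambda')p(\mu')}$ (which is legitimate because the left-hand side is manifestly an element of $\mathbb N[\varepsilon]/(\varepsilon^2-1)$) yields exactly the claimed formula for $s(\lambda'',\mu'')$. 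The principal obstacle is the bookkeeping that converts $\dim\Hom$-type data into multiplicity in $\mathbb N[\varepsilon]/(\varepsilon^2-1)$; once the parity correction $\theta^{p(\lambda'')}$ is handled via Schur's lemma, everything else is arithmetic in this semiring.
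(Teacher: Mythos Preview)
Your proof is correct and is precisely the ``direct computation using the definitions of $Z(\lambda,\mu)$ and $f^{\nu}_{\lambda,\mu}$'' that the paper invokes without spelling out; you have simply made explicit the passage from $Z(\lambda,\mu)$ to $V(\lambda)\otimes W(\mu)$ (picking up $\theta^{p(\lambda)p(\mu)}$), the Littlewood--Richardson decomposition of the pure $V$- and $W$-pieces, and the reassembly. The only point worth a remark is the final division by $\theta^{p(\lambda)p(\mu)+p(\lambda')p(\mu')}$: this is justified not merely because the left-hand side lies in $\mathbb N[\varepsilon]/(\varepsilon^2-1)$, but because the decomposition of an injective object into indecomposable $Z(\lambda'',\mu'')$'s is unique (Krull--Schmidt, or comparison of socles via Proposition~\ref{prop:z-inj}), so the coefficients match termwise and the injectivity of multiplication by $\theta$ on $\mathbb N[\varepsilon]/(\varepsilon^2-1)$ applies to each $s(\lambda'',\mu'')$ individually.
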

\begin{proof} The identity follows by direct computation using the definitions of $Z(\lambda, \mu)$ and $f^{\nu}_{\lambda,\mu}$. 
\end{proof}

\begin{corollary}\label{cor:zcoeff} We have
$$Z(\lambda,\mu)\otimes V=\bigoplus_{\lambda'\in\lambda+\square}Z(\lambda',\mu)^{\oplus u(\lambda',\lambda,\mu)},\; Z(\lambda,\mu)\otimes W=\bigoplus_{\mu'\in\mu+\square}Z(\lambda,\mu')^{\oplus u(\mu',\mu,\lambda)},$$
where 
$$u(\alpha',\alpha,\beta)=\begin{cases}1,\,\,\text{if}\,\, p(\alpha,\beta)=0,\,p(\alpha',\beta)=1,\\
\theta,\,\text{otherwise}.\end{cases}$$
\end{corollary}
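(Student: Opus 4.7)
The plan is to recognize $V \cong Z(\square, \emptyset)$ and $W \cong Z(\emptyset, \square)$, so that both identities become special cases of Lemma~\ref{lem:zcoeff}. For the first identity I would apply that lemma with the second pair set to $(\square, \emptyset)$; for the second, to $(\emptyset, \square)$.

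Next I would compute the relevant Littlewood--Richardson coefficients appearing in Lemma~\ref{lem:zcoeff}. Since $V(\emptyset) = \mathbb{C}$, one has $f^{\mu''}_{\mu, \emptyset} = \delta_{\mu'', \mu} \dim \End V(\mu) = \delta_{\mu'', \mu} \theta^{p(\mu)}$, using that $\End V(\mu)$ is $\mathbb{C}$ in the M-type case and $C_1$ of superdimension $\theta$ in the Q-type case. By (\ref{equ:tensorproduct}) together with the supercommutativity isomorphism $V(\lambda) \otimes V(\square) \cong V(\square) \otimes V(\lambda)$, one has $f^{\lambda''}_{\lambda, \square} = \theta^{p(\lambda)p(\lambda'')}\theta$ when $\lambda'' \in \lambda + \square$ and $0$ otherwise. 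Together these force $\mu'' = \mu$ and $\lambda'' \in \lambda + \square$.

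Substituting into the formula for $s$ in Lemma~\ref{lem:zcoeff} reduces the multiplicity to a ratio of $\theta$-powers depending only on the three parities $a = p(\lambda)$, $b = p(\mu)$, $c = p(\lambda'')$. A finite case analysis on the eight possibilities for $(a,b,c)$ should then yield $s(\lambda'', \mu) = u(\lambda'', \lambda, \mu)$, which is the first identity (with the $\lambda''$ of the lemma playing the role of $\lambda'$ in the corollary). The second identity follows by an identical argument with the roles of $V$ and $W$ exchanged.

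The main technical hurdle will be the arithmetic in $\mathbb{Z}[\varepsilon]/(\varepsilon^2-1)$: since $\theta^2 = 2\theta$, quotients like $\theta^n/\theta^m$ that appear in several of the cases are not a priori uniquely defined, and each must be reduced consistently with the super-multiplicity interpretation. In particular, one must read $p(\lambda, \mu) = p(\lambda) + p(\mu)$ modulo $2$ (as in Lemma~\ref{lem:parity}), so that the case $p(\lambda) = p(\mu) = 1$ with $p(\lambda'') = 0$ properly falls into the ``special'' branch of the piecewise definition of $u$ and yields $s = 1$ rather than $s = \theta$.
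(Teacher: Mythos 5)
Your proposal is correct and matches the paper's (implicit) proof: the corollary carries no explicit argument precisely because it is the specialization of Lemma~\ref{lem:zcoeff} to $(\lambda',\mu')=(\square,\emptyset)$ and $(\emptyset,\square)$, exactly as you set it up. The case analysis you outline but do not carry out does check out — with $a=p(\lambda)$, $b=p(\mu)$, $c=p(\lambda'')$, the net exponent of $\theta$ in $s(\lambda'',\mu)$ reduces to $cb+ac+1-ab-c$, which equals $0$ precisely when $a=b$ and $c\neq b$ (i.e.\ $p(\lambda,\mu)\equiv 0$, $p(\lambda'',\mu)\equiv 1$ mod~$2$) and equals $1$ otherwise, matching $u(\lambda'',\lambda,\mu)$; and your caution about reading the $\theta$-quotients as formal exponent arithmetic rather than literal division in $\mathbb{Z}[\varepsilon]/(\varepsilon^2-1)$ is well placed, since $\theta$ is a zero divisor there.
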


\begin{proposition}\label{prop:translation} The tensor products $V(\lambda,\mu)\otimes V$ and $V(\lambda,\mu)\otimes W$ have Loewy length at most 2.
Furthermore,
$$\soc(V(\lambda,\mu)\otimes V)=\bigoplus_{\lambda'\in\lambda+\square}V(\lambda',\mu)^{\oplus u(\lambda',\lambda,\mu)},$$
$$\soc(V(\lambda,\mu)\otimes W)=\bigoplus_{\mu'\in\mu+\square}V(\lambda,\mu')^{\oplus u(\mu',\mu,\lambda)},$$
and
$$\soc_2(V(\lambda,\mu)\otimes V)=\bigoplus_{\mu'\in\mu-\square}V(\lambda,\mu')^{\oplus u(\mu,\mu',\lambda)},$$
$$\soc_2(V(\lambda,\mu)\otimes W)=\bigoplus_{\lambda'\in\lambda-\square}V(\lambda',\mu)^{\oplus u(\lambda,\lambda',\mu)},$$
where $u(\alpha',\alpha,\beta)$ is defined in Corollary \ref{cor:zcoeff}.
\end{proposition}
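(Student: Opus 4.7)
The plan is to induct on $n = |\lambda| + |\mu|$, proving both statements simultaneously; the case of $V(\lambda,\mu) \otimes W$ is symmetric to that of $V(\lambda,\mu) \otimes V$ via $\lambda \leftrightarrow \mu$, $V \leftrightarrow W$. The base case $n = 0$ is immediate since $V(\emptyset, \emptyset) \otimes V \simeq V$ is simple, matching the formulas (with $\soc_2 = 0$ because $\emptyset - \square$ is empty). For the induction step, I would consider the short exact sequence $0 \to V(\lambda,\mu) \to Z(\lambda,\mu) \to N \to 0$, where $N := Z(\lambda,\mu)/V(\lambda,\mu)$. Tensoring with $V$ is exact and, combined with Corollary \ref{cor:zcoeff}, produces
\begin{equation*}
0 \to V(\lambda,\mu) \otimes V \to \bigoplus_{\lambda' \in \lambda + \square} Z(\lambda',\mu)^{\oplus u(\lambda',\lambda,\mu)} \to N \otimes V \to 0,
\end{equation*}
whose middle term is injective with socle $\bigoplus V(\lambda', \mu)^{\oplus u(\lambda',\lambda,\mu)}$.

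First I would show that this socle is entirely inside $V(\lambda,\mu) \otimes V$, which establishes the socle formula. Equivalently, no $V(\lambda', \mu)$ with $\lambda' \in \lambda + \square$ is a composition factor of $N \otimes V$. By Corollary \ref{cor:socfilt}, every composition factor of $N$ is of the form $V(\nu, \xi)$ with $|\nu| + |\xi| \leq n - 2$; by the induction hypothesis, every composition factor of $V(\nu, \xi) \otimes V$ is of the form $V(\nu + \square, \xi)$ or $V(\nu, \xi - \square)$, whose first label has size at most $|\lambda|$, strictly less than $|\lambda| + 1 = |\lambda'|$.

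Next I would determine all remaining composition multiplicities via
\begin{equation*}
[V(\lambda,\mu) \otimes V : V(\alpha,\beta)] = [Z(\lambda,\mu) \otimes V : V(\alpha,\beta)] - [N \otimes V : V(\alpha,\beta)],
\end{equation*}
where the first term is given by Corollary \ref{cor:socfilt} applied to each summand $Z(\lambda',\mu)$, and the second is obtained by combining Corollary \ref{cor:socfilt} for $N$ with the induction hypothesis for each $V(\nu,\xi) \otimes V$. A direct combinatorial calculation using the Pieri-type identity \eqref{equ:tensorproduct} for single-box shifted Littlewood--Richardson coefficients should show that this difference vanishes outside $(\alpha, \beta) = (\lambda', \mu)$ and $(\alpha, \beta) = (\lambda, \mu')$ with $\mu' \in \mu - \square$, and equals $u(\mu, \mu', \lambda)$ in the latter case.

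Once the composition series is known, the Loewy length bound follows: the cosocle $(V(\lambda,\mu) \otimes V)/\soc$ has composition factors only of the form $V(\lambda, \mu')$ with $|\mu'| = |\mu|-1$, and by Corollary \ref{cor:extension} no two such simples admit a non-trivial extension, since their first labels (both equal to $\lambda$) cannot differ by a box; hence the cosocle is semisimple and equals $\bigoplus V(\lambda, \mu')^{\oplus u(\mu,\mu',\lambda)}$. The main obstacle I expect is the combinatorial cancellation in the composition-factor computation: verifying, with the correct $\theta$-factors throughout, that every ``non-diagonal'' candidate $V(\alpha, \beta)$ with $\alpha \in (\lambda + \square) - \square \setminus \{\lambda\}$ and $\beta \in \mu - \square$ -- which appears a priori in $\soc_2$ of the injective middle term -- is exactly absorbed by the corresponding contribution from $N \otimes V$.
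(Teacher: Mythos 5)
Your plan takes a genuinely different route from the paper's. The paper proves the Loewy--length bound directly by observing that $V(\lambda,\mu)\otimes V\subset T^{p+1,q}$ and that every $\varphi\in C(p+1,q,2)$ involves two contractions, at least one of which lies entirely inside $T^{p,q}$ and therefore annihilates $V(\lambda,\mu)$; combined with Proposition~\ref{prop:socle} this immediately gives Loewy length $\leq 2$. It then gets $\soc$ from $\soc(V(\lambda,\mu)\otimes V)=\soc(Z(\lambda,\mu)\otimes V)$ exactly as you do, but for $\soc_2$ it computes $\hom(V(\lambda,\mu)\otimes V, Z(\lambda'',\mu''))$ by the adjunction $\hom(V(\lambda,\mu)\otimes V, Z)\simeq\hom(V(\lambda,\mu),\Gamma\Hom_{\mathbb C}(V,Z))$ together with Lemma~\ref{lm:auxilaryinjective} and Corollary~\ref{cor:zcoeff}, reducing everything to $\hom(V(\lambda,\mu),Z(\lambda,\mu)\otimes W)$. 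This bypasses any composition-factor bookkeeping entirely.

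Your argument for the $\soc$ formula is correct and self-contained: the size bound on the first label of composition factors of $N\otimes V$ forces $\soc(Z(\lambda,\mu)\otimes V)\subset V(\lambda,\mu)\otimes V$, and the $\ext^1$ argument at the end (using Corollary~\ref{cor:extension}) correctly upgrades ``all composition factors of $(V(\lambda,\mu)\otimes V)/\soc$ have first label $\lambda$'' to semisimplicity of that quotient and hence Loewy length $\leq 2$.

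The genuine gap is the one you flag yourself: the combinatorial cancellation that is supposed to show the difference $[Z(\lambda,\mu)\otimes V]-[N\otimes V]$ is supported only on $(\lambda',\mu)$ and $(\lambda,\mu')$ with the stated multiplicities. This is not a routine verification. The middle term $Z(\lambda,\mu)\otimes V$ contributes, at socle level $r\geq 1$, factors $V(\alpha,\beta)$ with $\alpha\in(\lambda+\square)-\Box^{(r)}$ for \emph{all} partitions $\lambda'\in\lambda+\square$, while $N\otimes V$ contributes at the same level a mixture coming from $\soc$ and $\soc_2$ of $V(\nu,\xi)\otimes V$ with $(\nu,\xi)$ ranging over the socle filtration of $N$; matching these term-by-term requires juggling the prefactors $\theta^{p(\lambda)p(\mu)}$, $\theta^{p(\gamma)}$, and $\theta^{p(\cdot,\cdot)p(\cdot,\cdot)+p(\cdot,\cdot)+p(\cdot,\cdot)}$ from Corollary~\ref{cor:socfilt} and the $u(\cdot,\cdot,\cdot)$ values from Corollary~\ref{cor:zcoeff}, and it is easy to get the parity bookkeeping off by a factor of $\theta$ versus $1$ versus $\varepsilon$. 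Without actually running this calculation (even for $r=1$), the proof is incomplete, and in particular the claimed multiplicities $u(\mu,\mu',\lambda)$ in $\soc_2$ are not established. In short: the skeleton is sound and genuinely different, but the load-bearing combinatorial identity is asserted rather than proved, and it is precisely the step the paper's adjunction computation is designed to avoid.
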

\begin{proof} Let $|\lambda|=p, |\mu|=q$. Recall that $V(\lambda,\mu)$ is a submodule of $\soc T^{p,q}$. Hence
$V(\lambda,\mu)\otimes V$ is a submodule of $T^{p+1,q}$. We now use Proposition \ref{prop:socle}. Note that 
$$V(\lambda,\mu)\otimes V\subset\ker\varphi$$ for any $\varphi\in C(p+1,q,2)$ since any such $\varphi$ involves two contractions.
Hence the Loewy length of $V(\lambda,\mu)\otimes V$ is at most $2$.

To obtain $\soc (V(\lambda,\mu)\otimes V)$ we use that
$$\soc (V(\lambda,\mu)\otimes V)=\soc (Z(\lambda,\mu)\otimes V)$$
and Corollary \ref{cor:zcoeff}.

To compute $\soc_2(V(\lambda,\mu)\otimes V)$ we first note that
$$[\soc_2(V(\lambda,\mu)\otimes V):V(\lambda'',\mu'')]\neq 0\,\, \Rightarrow \,\,|\lambda''|=|\lambda|, |\mu''|=|\mu|-1.$$
Furthermore,
$$\hom(V(\lambda,\mu)\otimes V, Z(\lambda'',\mu''))=\hom (V(\lambda,\mu),\Gamma(\Hom_{\mathbb C}(V,Z(\lambda'',\mu''))))$$
and
$$\Gamma(\Hom_{\mathbb C}(V,Z(\lambda'',\mu'')))=Z(\lambda'',\mu'')\otimes W\oplus S$$
for some $S\subset (T^{p-1,q-1})^{\oplus p\theta}$. Taking into account that $\hom (V(\lambda,\mu),S)=0$, we obtain
$$\hom(V(\lambda,\mu)\otimes V, Z(\lambda'',\mu''))=\hom(V(\lambda,\mu), Z(\lambda'',\mu'')\otimes W).$$
By Corollary \ref{cor:zcoeff} we know the decomposition of $Z(\lambda'',\mu'')\otimes W$. As a result, we see that 
$$\hom(V(\lambda,\mu), Z(\lambda'',\mu'')\otimes W)\neq 0\,\,\Rightarrow\,\,\lambda''=\lambda,\mu\in\mu'+\square.$$ Moroever,
$$[\soc_2(V(\lambda,\mu)\otimes V):V(\lambda'',\mu'')]=\dim \hom(V(\lambda,\mu), Z(\lambda'',\mu'')\otimes W)=u(\mu,\mu',\lambda).$$
This completes the proof for the identities involving $V(\lambda,\mu)\otimes V$. The identities involving $V(\lambda,\mu)\otimes W$ follow by similar reasoning.
\end{proof}

\section{ Koszulity of $\operatorname{Trep}\g$}
\begin{theorem}\label{th:koszulity} The category $\operatorname{Trep}\g$ is Koszul.
\end{theorem}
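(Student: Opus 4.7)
My plan is to prove Koszulity by establishing that every simple object $V(\lambda,\mu)$ in $\operatorname{Trep}\g$ admits a \emph{linear} minimal injective resolution with respect to a natural grading. By Theorem \ref{thm:zhom} together with Lemma \ref{lm:hom}, $\Hom_\g(Z(\lambda,\mu),Z(\lambda',\mu'))$ is nonzero only when $|\lambda|-|\lambda'|=|\mu|-|\mu'|=r\geq 0$, and this common integer $r$ endows the endomorphism algebra $A=\bigoplus \Hom_\g(Z(\lambda,\mu),Z(\lambda',\mu'))$ (whose category of finite-dimensional modules recovers each block of $\operatorname{Trep}\g$) with a $\mathbb Z_{\geq 0}$-grading. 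The degree-zero piece is semisimple, and by Corollary \ref{cor:extension} the group $\ext^1$ between simples shifts this grading by exactly $1$. It is therefore natural to set $\deg V(\lambda,\mu)=\min(|\lambda|,|\mu|)$ in each block, and under these conventions Koszulity is equivalent to: for every simple $V(\lambda,\mu)$, the $i$th term $R^i$ of its minimal injective resolution satisfies $[R^i:Z(\lambda',\mu')]\neq 0\Rightarrow |\lambda|-|\lambda'|=|\mu|-|\mu'|=i$ (equality, not merely the bound $\geq i$ provided by Lemma \ref{lem:resolution}).

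I would prove this linearity by induction on $\min(|\lambda|,|\mu|)$. The base case is Proposition \ref{prop:z-inj}: when $\mu=\emptyset$ (resp.\ $\lambda=\emptyset$) one has $V(\lambda,\emptyset)=Z(\lambda,\emptyset)$ injective, so the resolution is trivially linear. For the inductive step I would use the short exact sequence supplied by Proposition \ref{prop:translation},
\begin{equation*}
0\to \bigoplus_{\lambda'\in\lambda+\square}V(\lambda',\mu)^{\oplus u}\to V(\lambda,\mu)\otimes V\to \bigoplus_{\mu'\in\mu-\square}V(\lambda,\mu')^{\oplus u}\to 0,
\end{equation*}
together with the analogous sequence for $\otimes W$. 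Tensoring an inductively-linear injective resolution of $V(\lambda,\mu)$ by $V$ (which is exact, and in fact preserves injectivity by Proposition \ref{prop:injectivetensor}) yields an injective resolution of $V(\lambda,\mu)\otimes V$ whose degree shifts are controlled term-by-term. Passing through the long exact $\ext$-sequence produced by the displayed short exact sequence, I can pull a linear resolution of each $V(\lambda',\mu)$, $\lambda'\in\lambda+\square$, out of linear resolutions of the lower-degree modules $V(\lambda,\mu')$, $\mu'\in\mu-\square$, thus closing the induction. Linear resolutions of all simples is precisely the Beilinson--Ginzburg--Soergel criterion for Koszulity of $A$, and hence of the category.

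The main obstacle is careful bookkeeping of multiplicities and parity shifts: the $\theta$-factors from Corollary \ref{cor:socfilt} and Proposition \ref{prop:translation} distinguishing M-type and Q-type simples must be tracked so that no connecting homomorphism in the long exact sequence introduces subquotients in the wrong degree. A complementary (and possibly cleaner) route is to bypass the inductive construction and use Theorem \ref{thm:zhom} to compute directly the Poincar\'e series of $A$, together with the Poincar\'e series of the conjectural Koszul dual $A^!$ (which by the self-duality promised later in Section 6 should again be expressed through shifted Littlewood--Richardson coefficients, cf.\ (\ref{equ:tensorproduct})), and then verify Koszulity via the numerical identity $P_A(t)\,P_{A^!}(-t)=1$; the combinatorial content of the $Q$-Littlewood--Richardson coefficients of \cite{St}, \cite{GJKKK} is what ultimately makes this identity hold.
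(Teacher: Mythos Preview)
Your primary approach is essentially the paper's own: grade simples by $d(\lambda,\mu)=\min(|\lambda|,|\mu|)$, reformulate Koszulity as linearity of the minimal injective resolution, and induct using the Loewy-length-two short exact sequence of Proposition~\ref{prop:translation} for $V(\cdot,\cdot)\otimes V$, passing through the long exact $\ext$-sequence. The only cosmetic difference is that the paper names the inductively known module $V(\nu,\mu)$ with $\nu\in\lambda-\square$ and deduces the result for its socle summand $V(\lambda,\mu)$, whereas you call the known module $V(\lambda,\mu)$ and aim at $V(\lambda',\mu)$ with $\lambda'\in\lambda+\square$; this is the same argument. One small wording issue: the cosocle summands $V(\lambda,\mu')$ need not have strictly lower degree than $V(\lambda,\mu)$ (they have the same $\min$ when $|\lambda|<|\mu|$), but they do have degree strictly below the target $V(\lambda',\mu)$, which is all the induction needs.

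Your alternative route via the numerical criterion $P_A(t)P_{A^!}(-t)=1$ is genuinely different from the paper. The paper instead proves Koszul self-duality \emph{after} establishing Koszulity, by exhibiting an explicit automorphism of the tensor algebra of $A^1_{(k)}$ over $A^0_{(k)}$ that interchanges the quadratic relations $R$ and $R^\perp$ (a sign twist on the symmetric-group generators). Your Hilbert-series approach would in principle prove Koszulity and self-duality simultaneously, but it requires an independent combinatorial identity among shifted Littlewood--Richardson coefficients, and the numerical criterion only detects Koszulity once one already knows the algebra is quadratic; so in practice the inductive argument you sketched first (and which the paper uses) is the cleaner path.
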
 
\begin{proof} For any bipartition $(\lambda,\mu)$  we set
$$d(\lambda,\mu):= \min ( |\lambda|, |\mu|).$$
Let 
$$0\to R^0(\lambda,\mu)\to  R^1(\lambda,\mu)\to\dots\to R^k(\lambda,\mu)\to 0$$
be the minimal injective resolution of $V(\lambda,\mu)$ (note that the resolution is finite by Lemma \ref{lem:resolution}).
The Koszulity of $\operatorname{Trep}\g$ is equivalent to each of the following two equivalent statements:
\begin{enumerate}
\item $[R^i(\lambda,\mu):Z(\lambda',\mu')]\neq 0$ implies $d(\lambda,\mu)=d(\lambda',\mu')+i$;
\item $\ext^i(V(\lambda',\mu'),V(\lambda,\mu))\neq 0$ implies $d(\lambda,\mu)=d(\lambda',\mu')+i$.
\end{enumerate}
Indeed, (1) is equivalent to Koszulity since $d(\cdot, \cdot)$ induces the grading on $\operatorname{Trep}\g$. Furthermore, (1) obviously implies (2). To show that
(2) implies (1) assume the opposite, i.e. that there exists $(\lambda',\mu')$ such that $d(\lambda,\mu)=d(\lambda',\mu')+i$ and 
$[R^j(\lambda,\mu):Z(\lambda',\mu')]\neq 0$ for some $j\neq i$.
Lemma \ref{lem:resolution} implies $j<i$. Let us choose the minimal such $j$.
Since $\ext^j(V(\lambda,\mu),V(\lambda',\mu'))=0$, the map $Z(\lambda',\mu')\to R^{j+1}(\lambda,\mu)$ must be injective, which contradicts the minimality of
the resolution.

Without loss of generality we assume that $|\lambda| \leq |\mu|$, i.e. $d (\lambda, \mu) = |\lambda|$. We prove (2) for all $\lambda, \mu$ by induction on $|\lambda|$. The base case $\lambda=\emptyset$ follows from the fact that $V(\emptyset,\mu)$ is injective.
To prove the inductive step pick up $\nu\in\lambda-\square$.  Recall that $V(\nu,\mu) \otimes V$ has Loewy length $2$ by Proposition \ref{prop:translation}. Consider the exact sequence
\begin{equation}\label{eq:soc}
0\to \soc(V(\nu,\mu)\otimes V)\to V(\nu,\mu)\otimes V\to\soc_2(V(\nu,\mu)\otimes V)\to 0
\end{equation}
and the minimal resolution
$$0\to R^0(\nu,\mu)\to  R^1(\nu,\mu)\to\dots\to R^k(\nu,\mu)\to 0$$
of $V(\nu,\mu)$. Note that by Proposition \ref{prop:translation}, all simple components of $\soc_2(V(\nu,\mu)\otimes V)$ satisfy the induction hypothesis. Therefore, 
\begin{equation}\label{eq:cond1}\ext^j(V(\lambda',\mu'),\soc_2(V(\nu,\mu)\otimes V))\neq 0\,\,\Rightarrow\,\, i = |\nu| - |\lambda'| = |\lambda| - |\lambda'| -1.
\end{equation}

This resolution satisfies (1) by the induction hypothesis. 
We have that  
$$0\to R^0(\nu,\mu)\otimes V\to  R^1(\nu,\mu)\otimes V\to\dots\to R^k(\nu,\mu)\otimes V\to 0.$$
is an injective resolution of $V(\nu,\mu)\otimes V$. Since $[R^i(\nu,\mu):Z(\nu',\mu')]\neq 0$ implies $i = |\nu| - |\nu'|$, by Corollary \ref{cor:zcoeff}  we have  that 
\begin{equation}\label{eq:cond2}[R^i(\nu,\mu) \otimes V:Z(\lambda',\mu')]\neq 0\,\,\Rightarrow\,\, i = |\nu| - |\lambda'| +1 = |\lambda| - |\lambda'|.
\end{equation}
Equivalently, 
$$\ext^j( V(\lambda',\mu'),V(\nu,\mu)\otimes V))\neq 0\,\,\Rightarrow\,\,i = |\lambda| - |\lambda'|.$$

Therefore, by (\ref{eq:cond1}) and (\ref{eq:cond2}) the long exact sequence $\ext^{\cdot}(V(\lambda', \mu'), \cdot)$ associated to (\ref{eq:soc}) gives $\ext^{i}(V(\lambda', \mu'), \soc(V(\nu, \mu) \otimes V)) = 0$ for $i \neq |\lambda| - |\lambda'|$. Since $V(\lambda,\mu)$ is a direct summand in $\soc(V(\nu,\mu)\otimes V)$, we prove that  condition (2) holds for $V(\lambda,\mu)$.
\end{proof}

Recall that  $\mathbb T =\bigoplus T^{p,q}$. Set $\mathbb T_{> k}=\bigoplus_{p+q > k}  T^{p,q}$ and  $\mathbb T_{\leq k}=\bigoplus_{p+q \leq k}  T^{p,q}$. Let also
$$
A_{(k)}=\left\{ \varphi \in \End_{\g} \mathbb T\; | \; \varphi(\mathbb T_{> k}) = 0\right\}.
$$
Clearly, $A_{(k)} \simeq \End \left( \mathbb T_{\leq k}\right) $.  By $A_{(k)}$-mod we denote the category of finite-dimensional ${\mathbb Z}_2$-graded $A_{(k)}$-modules.

We  have a chain of monomorphisms
$$
A_{(1)} \subset A_{(2)} \subset \dots
$$
Note that the unit of $A_{(k)}$ does not map to the unit of $A_{(k+1)}$ under the embedding $ A_{(k)}\hookrightarrow A_{(k+1)}$. We set 
$$
A = \lim_{\longrightarrow} A_{(k)}, \; A{\rm - mod} = \lim_{\longrightarrow} \left( A_{(k)}{\rm - mod}\right)$$
Note that $A$ is not unital and that the category $A{\rm - mod}$ consists of all  finite-dimensional ${\mathbb Z}_2$-graded $A$-modules $X$ such that $AX=X$.

\begin{theorem} The functors $\Hom_\g(\cdot,\mathbb T)$ and $\Hom_A(\cdot,\mathbb T)$ establish an antiequivalence of the categories $\operatorname{Trep}\g$ and $A$-mod.
\end{theorem}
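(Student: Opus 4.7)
My plan is to recognize the statement as the standard Morita-type contravariant duality associated with the injective cogenerator $\mathbb T$ of $\operatorname{Trep}\g$, whose locally unital endomorphism algebra is $A$. By Proposition \ref{prop:z-inj}, every indecomposable injective $Z(\lambda,\mu)$ is a direct summand of $T^{|\lambda|,|\mu|}$, so $\mathbb T$ contains every indecomposable injective with positive multiplicity; this makes $\mathbb T$ an injective cogenerator of the finite-length category $\operatorname{Trep}\g$, and every $M\in \operatorname{Trep}\g$ embeds in a finite direct sum of copies of $\mathbb T_{\leq k}$ for $k$ sufficiently large.

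First I would verify well-definedness of both functors. For $M\in \operatorname{Trep}\g$, Lemma \ref{lm:hom}(i) forces $\Hom_\g(M,T^{p,q})=0$ unless some simple constituent $V(\lambda,\mu)$ of $M$ has $|\lambda|=p$ and $|\mu|=q$, so by finite length $\Hom_\g(M,\mathbb T)$ is finite-dimensional and factors through $\mathbb T_{\leq k}$ for some $k$; hence it is a unital $A_{(k)}$-module and belongs to $A\text{-mod}$. Conversely, any $X\in A\text{-mod}$ satisfies $X=A_{(k)}X$ for some $k$, and $\Hom_A(X,\mathbb T)=\Hom_{A_{(k)}}(X,\mathbb T_{\leq k})$ inherits a $\g$-action via the action on $\mathbb T_{\leq k}$. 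It embeds in a finite direct sum of copies of $\mathbb T_{\leq k}$, so it has finite length, is integrable, and satisfies the large annihilator condition; hence it lies in $\operatorname{Trep}\g$.

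Next I would construct the canonical evaluation maps
$$\eta_M\colon M\to \Hom_A\bigl(\Hom_\g(M,\mathbb T),\mathbb T\bigr),\qquad \varepsilon_X\colon X\to \Hom_\g\bigl(\Hom_A(X,\mathbb T),\mathbb T\bigr),$$
and show that both are isomorphisms. For $\eta$, the finite injective resolutions from Lemma \ref{lem:resolution} together with the five lemma reduce the verification to the case $M=T^{p,q}$. Letting $e_{p,q}\in A$ be the idempotent projecting onto $T^{p,q}$, one has $\Hom_\g(T^{p,q},\mathbb T)=Ae_{p,q}$ as a left $A$-module, whence $\Hom_A(Ae_{p,q},\mathbb T)\cong e_{p,q}\mathbb T=T^{p,q}$ canonically, so $\eta_{T^{p,q}}$ is an isomorphism. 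For $\varepsilon$, one descends to the finite level where $X$ is an $A_{(k)}$-module and $\mathbb T_{\leq k}$ is a finite-dimensional injective cogenerator of the subcategory $\operatorname{Trep}^k\g$; the classical finite-dimensional Morita biduality then recovers $X$ as $\Hom_\g(\Hom_{A_{(k)}}(X,\mathbb T_{\leq k}),\mathbb T_{\leq k})$.

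The main obstacle will be the double centralizer identity on each level, namely that the image of $U(\g)$ in $\End_{A_{(k)}}(\mathbb T_{\leq k})$ exhausts this endomorphism algebra. This can be checked summand by summand via the explicit bimodule structure of $\Hom_\g(T^{p,q},T^{r,s})$ given by the diagrammatic Lemma \ref{lm:homtensor}, and via the identification $\End_\g(T^{p,q})\simeq \mathcal H_{p,q}$ of Lemma \ref{lem:endisomorphism}. Together with the functoriality of the biduality in $k$ inherited from the embeddings $A_{(k)}\hookrightarrow A_{(k+1)}$, this assembles the level-wise isomorphisms into the desired antiequivalence.
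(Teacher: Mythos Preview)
Your overall strategy matches the paper's: pass to each truncation level $k$, construct the two evaluation maps, and verify they are isomorphisms. Your treatment of $\eta_M$ via injective resolutions and the five lemma is a legitimate variant of the paper's argument (the paper instead checks simple $M$ first and then climbs short exact sequences by induction on length; both work because $\Hom_\g(\cdot,\mathbb T_{\leq k})$ is exact and the composite $\Psi\Phi$ is left exact).

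The gap is in your treatment of $\varepsilon_X$. First, $\mathbb T_{\leq k}=\bigoplus_{p+q\leq k}T^{p,q}$ is \emph{not} finite-dimensional, since each $T^{p,q}$ is already infinite-dimensional; so no ``finite-dimensional Morita biduality'' applies. More seriously, the double centralizer identity you single out as the main obstacle is actually \emph{false}: already for $k=1$ one has $A_{(1)}\simeq \mathbb C\oplus C_1\oplus C_1$, and $\End_{A_{(1)}}(\mathbb C\oplus V\oplus W)$ contains the full space of $P$-equivariant linear endomorphisms of the countable-dimensional space $V$, an uncountable-dimensional algebra that cannot be the image of $U(\g)$. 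So your proposed summand-by-summand verification via Lemma~\ref{lm:homtensor} cannot succeed.

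Fortunately this identity is also unnecessary. The paper handles $\varepsilon_X$ (their $\alpha_X$) in exact parallel to $\eta_M$: one checks directly that $\varepsilon_{A_{(k)}}$ is an isomorphism, which is immediate from $\Psi(A_{(k)})=\mathbb T_{\leq k}$ and the \emph{definition} $A_{(k)}=\End_\g(\mathbb T_{\leq k})$ (no double centralizer needed). Hence $\varepsilon_Z$ is an isomorphism for every finite free $A_{(k)}$-module $Z$. For arbitrary $X$ take a presentation $0\to Y\to Z\to X\to 0$ with $Z$ free; right-exactness of $\Phi\Psi$ first gives surjectivity of $\varepsilon_X$ for all $X$, hence of $\varepsilon_Y$, and then injectivity of $\varepsilon_Z\circ\tau$ forces $\varepsilon_Y$ and $\Phi\Psi(\tau)$ to be injective, after which the five lemma yields that $\varepsilon_X$ is an isomorphism. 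Replace your final paragraph with this argument and your proof is complete.
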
  
\begin{proof}
Recall that $\displaystyle \operatorname{Trep}\g = \lim_{\longrightarrow} \operatorname{Trep}^k\g $. By Proposition \ref{prop:injectivetensor} and Corollary \ref{cor:indecomposable}, $\mathbb T_{\leq k}$ is  an injective cogenerator of $\operatorname{Trep}\g$. In order to prove the statement, it is sufficient to show that the functors $\Phi:= \Hom_{\g} (\cdot, \mathbb T_{\leq k})$ and $\Psi:= \Hom_{A_{(k)}} (\cdot, \mathbb T_{\leq k})$ establish an antiequivalence of the categories $ \operatorname{Trep}^k\g $ and $A_{(k)}{\rm - mod}$.  We have that $\Phi$ is an exact functor since $\mathbb T_{\leq k}$ is an injective module in $ \operatorname{Trep}^k\g$. Therefore, $\Psi \Phi$ is a left exact functor, and $\Phi \Psi$ is a right exact functor.

We first note that for all $X \in  A_{(k)}{\rm - mod}$ and $M \in \operatorname{Trep}^k\g $ we have isomorphisms 
$$
\Hom_{A} (X, \Phi M) \simeq \Hom_{A \times \g} (X \otimes M, \mathbb T_{\leq k} )  \simeq \Hom_{\g} (M, \Psi X).
$$
Using the isomorphisms 
$$
\Hom_{\g} (\Psi X, \Psi X) \simeq \Hom_{A} (X, \Phi \Psi X), \; \Hom_{A} (\Phi M, \Phi M) \simeq \Hom_{\g} (M, \Psi \Phi M), 
$$
we define morphisms $\alpha_X: X \to \Phi \Psi X$ and $\beta_M: M \to \Psi \Phi M$. To complete the proof, it is sufficient to verify that $\alpha_X$ and $\beta_M$ are isomorphisms for all $X \in  A_{(k)}{\rm - mod}$ and all $M \in \operatorname{Trep}^k\g$. Note that this is true for simple modules by Corollary \ref{cor:indecomposable}. 

We first prove that $\beta_M$ is an isomorphism using induction on the length of $M$. As mentioned above, $\beta_M$ is isomorphism for simple modules $M$ which implies the base case. Let 
\begin{equation}
\begin{CD}
0@>>> N @>>>M @>\sigma>>L @>>>0
\end{CD}\notag
\end{equation}
be a short exact sequence of modules in $\operatorname{Trep}^k\g $. Consider the induced diagram

\begin{equation}
\begin{CD}
0@>>> N @>>>M @>\sigma>>L @>>>0\\
@VVV @V\beta_NVV @V\beta_MVV @V\beta_LVV\\
0@>>>\Psi \Phi N @>>>\Psi \Phi M @>\Psi \Phi (\sigma)>> \Psi \Phi L
\end{CD}\notag
\end{equation}
\medskip

By the induction hypothesis, $\beta_N$ and $\beta_L$ are isomorphisms. Therefore  $\beta_L \sigma$ is surjective which implies that $\Psi \Phi (\sigma)$ is surjective. By the Five Lemma, $\beta_M$ is an isomorphism.

We last show that $\alpha_{X}$ is an isomorphism. Note that $\alpha_{A_{(k)}}$ is an isomorphism and hence  $\alpha_{Z}$ is an isomorphism for any free $A_{(k)}$-module $Z$ of finite rank. Any $X$ in $A_{(k)}$-mod can be included in a short exact sequence 
\begin{equation}
\begin{CD}
0@>>> Y @>\tau>>Z @>\varphi>>X @>>>0
\end{CD}\notag
\end{equation}
for some free $A_{(k)}$-module $Z$ of finite rank. Consider  the induced diagram

\begin{equation}
\begin{CD}
0@>>> Y @>\tau>>Z @>\varphi>>X @>>>0\\
@. @V\alpha_YVV @V\alpha_ZVV @V\alpha_XVV @VVV\\\
@.\Phi \Psi Y @>\Phi \Psi (\tau)>>\Phi \Psi Z @>\Psi \Phi (\varphi)>> \Phi \Psi X @>>>0
\end{CD}\notag
\end{equation}
Since $\alpha_Z$ is an isomorphism,  $\alpha_X$ is surjective for any module $X$. In particular, $\alpha_Y$ is surjective. On the other hand, $\Phi \Psi (\tau) \alpha_Y = \alpha_Z \tau$ is injectve and thus $\alpha_Y$ and $\Phi \Psi (\tau)$ are injective as well. By the Five Lemma, $\alpha_X$ is an isomorphism. \end{proof}

\begin{proposition}
$A_{(k)}$ is a Koszul self-dual superalgebra.
\end{proposition}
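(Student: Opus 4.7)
The plan is to leverage the explicit description of $A_{(k)}$ and the preceding results to directly compute the Koszul dual and match it with the original algebra. By the preceding theorem, $A_{(k)} \cong \End_\g(\mathbb T_{\leq k})^{op}$, and Theorem \ref{thm:zhom} gives a complete combinatorial description of $\Hom_\g(Z(\lambda,\mu), Z(\lambda',\mu'))$ in terms of shifted Littlewood--Richardson coefficients $f^{\lambda}_{\lambda',\gamma} f^{\mu}_{\mu',\gamma}$, summed over $|\gamma|=r$. The grading induced by $d(\lambda,\mu) = \min(|\lambda|,|\mu|)$ makes $A_{(k)}$ into a graded superalgebra (this is forced by the proof of Theorem \ref{th:koszulity}, which already showed that this is the Koszul grading). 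So the first step is simply to record that $A_{(k)}$ is a graded Koszul superalgebra whose idempotents correspond to pairs $(\lambda,\mu)$ with $|\lambda|+|\mu|\leq k$.

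Next, I would compute the Koszul dual $A_{(k)}^!$ as the Yoneda Ext-algebra $\bigoplus_i \ext^i(L,L)$, where $L = \bigoplus V(\lambda,\mu)$ ranges over the simples indexing $A_{(k)}$. The graded pieces are given (up to parity shifts) by the formulas in Corollary \ref{cor:extension} for $i=1$ and, more generally, can be read off from the minimal injective resolution described in Lemma \ref{lem:resolution} together with Corollary \ref{cor:socfilt}. A direct computation shows that $\dim \ext^i(V(\lambda',\mu'), V(\lambda,\mu))$ is governed by the same sum $\sum_{|\gamma|=i} f^{\lambda}_{\lambda',\gamma} f^{\mu}_{\mu',\gamma}$ as $\dim \Hom_\g(Z(\lambda,\mu), Z(\lambda',\mu'))$, up to the explicit parity factor $\theta^{p(\lambda)p(\mu)+p(\lambda')p(\mu')+\cdots}$ that appears in Corollary \ref{cor:socfilt}.

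The third step is to construct an explicit isomorphism $A_{(k)} \xrightarrow{\sim} A_{(k)}^!$. The obvious candidate sends the idempotent attached to $(\lambda,\mu)$ to the idempotent attached to $(\lambda,\mu)$, and sends a morphism in the radical layer of degree $r$ between $Z(\lambda,\mu)$ and $Z(\lambda',\mu')$ (described diagrammatically in Section 5 via concatenations of the elementary diagrams $t$, $o$, $s$) to the corresponding element of $\ext^r(V(\lambda',\mu'), V(\lambda,\mu))$ built from the same diagrammatic data. Compatibility with composition follows from the induction-branching description in Lemma \ref{lm:homtensor} together with Frobenius reciprocity, exactly the ingredients used in Theorem \ref{thm:zhom}. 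The identification of multiplicities between $\Hom$-spaces and Ext-spaces makes this map an isomorphism of graded superspaces; the verification that it is an algebra map reduces to comparing two instances of the same shifted-LR branching calculation.

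The main obstacle will be the multiplicative compatibility: passing from an isomorphism of graded vector spaces (which is essentially given by Theorem \ref{thm:zhom} combined with Corollary \ref{cor:socfilt}) to an isomorphism of algebras requires carefully tracking parity signs and the $C_1$-structure introduced in Section 2, especially in the Q-type cases where $p(\lambda,\mu)=1$. I would handle this by reducing to the quadratic part (since both algebras are Koszul, they are quadratic, so it suffices to check the isomorphism in degrees $0, 1, 2$) and then invoking the diagrammatic presentation: the quadratic relations among the elementary diagrams $t$, $o$, $s$ correspond term-by-term, under the proposed map, to the Yoneda products in $\ext^2$ computed from the two-step resolutions arising from the Loewy-length-$2$ modules $V(\lambda,\mu)\otimes V$ and $V(\lambda,\mu)\otimes W$ of Proposition \ref{prop:translation}.
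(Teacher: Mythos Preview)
Your reduction to the quadratic presentation is the right move and matches the paper's strategy, but the proposal has a genuine gap in the construction of the isomorphism. Your ``obvious candidate'' (identity on idempotents, diagrammatic in degree~$1$) cannot be an algebra map: once you compute $A_{(k)}^1\otimes_{A_{(k)}^0}A_{(k)}^1$ using Lemma~\ref{lm:homtensor}, the quadratic relations of $A_{(k)}$ turn out to be the \emph{supersymmetric} tensors $x\otimes y-(-1)^{p(x)p(y)}y\otimes x$ for $x,y\in C(1,1,1)$, while those of the Koszul dual $B_{(k)}=(A_{(k)}^!)^{\mathrm{opp}}$ are the orthogonal complement, the \emph{superantisymmetric} tensors $x\otimes y+(-1)^{p(x)p(y)}y\otimes x$. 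An identity-type map in degrees $0$ and $1$ will never carry one ideal to the other. What the paper does instead is apply a nontrivial automorphism $\gamma$ of $A_{(k)}^0=\bigoplus\mathcal H_{p,q}$, namely the sign character of $S_p$ (sending $s(p,q,i)\mapsto -s(p,q,i)$ for $i<p$, fixing the $S_q$- and Clifford-generators). Because the swap of the two $C(1,1,1)$ factors is implemented by a transposition in $S_p$ (and one in $S_q$), twisting by $\gamma$ flips the sign of that transposition and converts the symmetric relations into the antisymmetric ones, giving $\widetilde\gamma(R)=R^\perp$ and hence $A_{(k)}\simeq B_{(k)}$. Your proposal mentions that signs will need care but does not locate this mechanism.

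There is also a circularity issue in your step~2. You assert that $\dim\operatorname{ext}^i(V(\lambda',\mu'),V(\lambda,\mu))$ is given by the same shifted Littlewood--Richardson sum as $\dim\Hom_{\g}(Z(\lambda,\mu),Z(\lambda',\mu'))$, citing Corollary~\ref{cor:socfilt}. But that corollary computes socle-layer multiplicities of $Z(\lambda,\mu)$, not the multiplicities $[R^i(\lambda,\mu):Z(\lambda',\mu')]$ in the minimal injective resolution, and it is the latter that controls $\operatorname{ext}^i$. The equality of these two quantities is precisely the content of Koszul \emph{self}-duality and appears in the paper only as the final corollary, deduced from the proposition you are trying to prove. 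If you want to avoid the explicit $\gamma$-twist you would need an independent computation of the resolution multiplicities, which you have not supplied.
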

\begin{proof} We follow the notation and definitions of Section 2 of \cite{BGS}. The Koszulity of $A_{{k}}$ follows from the Koszulity of $\operatorname{Trep}^k\g$. Then
$$
A_{(k)} = \bigoplus_{r\geq 0}A_{(k)}^r 
$$
where  $A_{(k)}^r =\bigoplus_{p+q \leq r} C(p,q,r)$. In particular, $A_{(k)}^0 =\bigoplus_{p+q \leq r} {\mathcal H}_{p,q}$ is a semisimple superalgebra. From Lemma \ref{lm:homtensor}, 
$$
C(1,1,1) \otimes_{\mathcal H_{1,1}} \mathcal H_{p,q} \simeq C(p,q,1).
$$
Furthermore,
$$
C(p-1,q-1,1) \otimes_{\mathcal H_{p-1,q-1}} C(p,q,1) \simeq  \left( C(1,1,1) \boxtimes C(1,1,1) \right) \otimes_{\mathcal H_{1,1} \otimes \mathcal H_{1,1} } \mathcal H_{p,q}.
$$
Therefore,
$$
A_{(k)}^1 \otimes_{A_{(k)}^0} A_{(k)}^1 \simeq  \bigoplus_{p+q \leq k} \left( C(1,1,1) \boxtimes C(1,1,1) \right) \otimes_{\mathcal H_{1,1} \otimes \mathcal H_{1,1} } \mathcal H_{p,q}.
$$
and the quadratic relations submodule of $A_{(k)}^1 \otimes_{A_{(k)}^0} A_{(k)}^1 $  is generated by the elements $x \otimes y - (-1)^{p(x)p(y)}y \otimes x $, $x,y \in C(1,1,1)$.
Let $B_{(k)} = \left( A_{(k)}^{!}\right)^{\rm opp}$ be the Koszul dual of $A_{(k)}$. Then $A_{(0)} = B_{(0)}$, $A_{(1)} = B_{(1)}$, and 
$$
B_{(k)}^1 \otimes_{B_{(k)}^0} B_{(k)}^1 \simeq A_{(k)}^1 \otimes_{A_{(k)}^0} A_{(k)}^1.
$$ 
The quadratic relations submodule of $B_{(k)}^1 \otimes_{B_{(k)}^0} B_{(k)}^1 $  is generated by the elements $x \otimes y + (-1)^{p(x)p(y)}y \otimes x $, $x,y \in C(1,1,1)$.

Let $U = A_{(k)}^1 = B_{(k)}^1$. Then $A_{(k)} = T(U)/(R)$ and $B_{(k)} = T(U)/(R^{\perp})$. Consider the automorphism $\gamma$ of $A_{(k)}^0$ defined by $s(p,q,i) \mapsto s(p,q,i)$ if $i>p$, $s(p,q,i) \mapsto - s(p,q,i)$ if $i <p$, $o(p,q,j) \mapsto o(p,q,j)$. Then $U^{\gamma} = U$ and $\gamma$ extends to an automorphism $\widetilde{\gamma} : T(U) \to T(U)$ such that $\widetilde{\gamma}(R) = R^{\perp}$. Hence $A_{(k)}$ is isomorphic to $B_{(k)}$.
\end{proof}

\begin{corollary} We have that
$$\dim\operatorname{ext}^i(V(\lambda',\mu'),V(\lambda,\mu))=[\soc^{i+1}Z(\lambda,\mu):V(\lambda',\mu')],$$
and the latter are computed in Corollary \ref{cor:socfilt}.
\end{corollary}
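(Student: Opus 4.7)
The plan is to identify both sides with the multiplicity of $Z(\lambda',\mu')$ in the $i$-th term of a minimal injective resolution of $V(\lambda,\mu)$, reading the right-hand side $[\soc^{i+1}Z(\lambda,\mu):V(\lambda',\mu')]$ in the layer-by-layer sense of $[\soc_i Z(\lambda,\mu):V(\lambda',\mu')]$ from Corollary \ref{cor:socfilt}.

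First, by Lemma \ref{lem:resolution} there is a finite minimal injective resolution
$$0\to V(\lambda,\mu)\to R^0(\lambda,\mu)\to R^1(\lambda,\mu)\to\cdots\to R^k(\lambda,\mu)\to 0.$$
By minimality the differential $R^{i-1}\to R^i$ lands in a submodule whose socle contains no new copy of $V(\lambda',\mu')$, so the induced map on $\hom(V(\lambda',\mu'),-)$ vanishes, giving
$$\ext^i(V(\lambda',\mu'), V(\lambda,\mu))\;\cong\;\hom(V(\lambda',\mu'), R^i(\lambda,\mu)).$$
Decomposing $R^i(\lambda,\mu)=\bigoplus_{(\nu,\tau)}Z(\nu,\tau)^{\oplus m_i(\nu,\tau)}$ and using $\soc Z(\nu,\tau)=V(\nu,\tau)$ from Proposition \ref{prop:z-inj}, only the summand with $(\nu,\tau)=(\lambda',\mu')$ contributes, yielding
$$\dim\ext^i(V(\lambda',\mu'), V(\lambda,\mu))=m_i(\lambda',\mu')\cdot\dim\End V(\lambda',\mu')$$
in $\mathbb{Z}[\varepsilon]/(\varepsilon^2-1)$.

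Second, I identify $m_i(\lambda',\mu')$ with $[\soc_i Z(\lambda,\mu):V(\lambda',\mu')]$ (up to the same $\End$ factor) via the Koszul self-duality of $A_{(k)}$ just established. Under the antiequivalence $\operatorname{Trep}^k\g\simeq A_{(k)}\text{-mod}^{\rm op}$, simples correspond to simples ($V(\lambda,\mu)\leftrightarrow L(\lambda,\mu)$), injective hulls to projective covers ($Z(\lambda,\mu)\leftrightarrow P(\lambda,\mu)$), and socle filtrations to radical filtrations. Because $A_{(k)}$ is Koszul, the radical filtration of $P(\lambda,\mu)$ equals its grading filtration, so the $i$-th Loewy layer is $A_{(k)}^{i}\otimes_{A_{(k)}^{0}}L(\lambda,\mu)$. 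Koszul self-duality $A_{(k)}^{!}\cong A_{(k)}$ then identifies $\ext^i(L(\lambda,\mu),L(\lambda',\mu'))$ with the $L(\lambda',\mu')$-isotypic component of this layer. Translating through the antiequivalence and cancelling common $\End$ factors yields $m_i(\lambda',\mu')=[\soc_i Z(\lambda,\mu):V(\lambda',\mu')]$; combined with Step 1 and Corollary \ref{cor:socfilt} this gives the explicit formula.

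The main technical obstacle is the careful tracking of super-parity factors, specifically the way the division superalgebra $C_1$ (which governs Q-type simples) introduces factors of $\theta=1+\varepsilon$ at each stage of the Koszul-self-duality translation; these must cancel consistently for the numerical identity in $\mathbb{Z}[\varepsilon]/(\varepsilon^2-1)$ to hold on the nose. An alternative route avoiding self-duality is to build $R^\bullet$ inductively via $R^0=Z(\lambda,\mu)$ and $R^{i+1}$ equal to the injective hull of $\soc(R^i/\operatorname{im}(R^{i-1}\to R^i))$, and then prove by induction, using the Koszul degree constraint from Theorem \ref{th:koszulity} that forces all $V(\nu,\tau)$ in $R^i$ to satisfy $d(\nu,\tau)=d(\lambda,\mu)-i$, that this successive socle equals $\soc_{i+1}Z(\lambda,\mu)$ exactly.
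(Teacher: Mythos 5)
Your main route is correct and matches the intended argument: the corollary follows at once from the Koszul self-duality of $A_{(k)}$ established in the preceding proposition, which is why the paper states it without proof. Step 1 is clean — for a minimal injective resolution $R^\bullet$ of $V(\lambda,\mu)$, every image $\operatorname{im}(d^{i-1})$ is essential in $R^i$, hence contains $\soc R^i$, so $\hom(V(\lambda',\mu'),-)$ applied to the resolution has zero differentials and $\ext^i(V(\lambda',\mu'),V(\lambda,\mu))\cong\hom(V(\lambda',\mu'),R^i)$, which is $m_i(\lambda',\mu')$ up to the $\End$ factor. Step 2 is the standard Koszul-duality bookkeeping: under the antiequivalence with $A_{(k)}$-mod, injective hulls become projective covers and socle filtrations become radical filtrations; Koszulity identifies radical layers of $P(L)$ with the graded pieces $A_{(k)}^i\otimes_{A_{(k)}^0}L$; and the isomorphism $A_{(k)}\cong (A_{(k)}^!)^{\mathrm{opp}}$ identifies $\ext^i(L,L')$ with exactly those pieces. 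You also correctly read $[\soc^{i+1}Z(\lambda,\mu):V(\lambda',\mu')]$ in the layer sense $[\soc_i Z(\lambda,\mu):V(\lambda',\mu')]$ — as written the two agree in the Koszul degree $i=|\lambda|-|\lambda'|$ (the only degree where either side can be nonzero), since Corollary \ref{cor:socfilt} together with Corollary \ref{cor:LR} forces $V(\lambda',\mu')$ to occur in only one Loewy layer.

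The proposed alternative route ``avoiding self-duality'' is, however, circular as stated. The Koszul degree constraint from Theorem \ref{th:koszulity} tells you only that every indecomposable injective summand $Z(\nu,\tau)$ of $R^i$ satisfies $d(\nu,\tau)=d(\lambda,\mu)-i$; it says nothing about the multiplicities. The claim that the successive socle of $R^i/\operatorname{im}(R^{i-1}\to R^i)$ equals $\soc_{i+1}Z(\lambda,\mu)$ \emph{is} the corollary (up to $\End$ factors), and for a Koszul algebra which is not self-dual that identification is simply false: the socle/radical layers of $P(L)$ are governed by $A^i$, while the Ext-groups are governed by $(A^!)^i$, and these need not match. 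So some form of $A^!\cong A$ is genuinely required, and your first route is the right one.
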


\end{document}